\newtheorem{conjecture}{Conjecture}
\newtheorem{corollary}{Corollary} 
\newtheorem{definition}{Definition} 
\newtheorem{proposition}{Proposition}
\newtheorem{theorem}{Theorem}
\begin{document}

\title[Higher-Order Derivatives of First-Passage Percolation]{Higher-Order Derivatives of First-Passage Percolation with Respect to the Environment} 

\author[I. Mati\'c]{
Ivan Mati\'c}
\address{
Baruch College, City University of New York}\email{ivan.matic@baruch.cuny.edu}
\author[R. Radoi\v ci\'c]{
Rado\v s Radoi\v ci\'c} \address{
Baruch College, City University of New York}\email{rados.radoicic@baruch.cuny.edu} 
\author[D. Stefanica]{
Dan Stefanica}\address{
Baruch College, City University of New York}\email{dan.stefanica@baruch.cuny.edu}

\begin{abstract}
We introduce and study derivatives in first-passage percolation with edge weights given by i.i.d. random variables supported on $\{a,b\}$. 
We show that the variance of the passage time can be expressed in terms of these derivatives. We further analyze their structure and establish several 
fundamental properties and bounds. Our bounds for the lower Fourier levels on the torus model raise the prospect that, in dimensions 3 and higher, the variance may grow slower than any positive power of $n$. Such growth would contradict the commonly held belief that the fluctuation exponent is positive. \end{abstract}

\maketitle

\section{Introduction}

\subsection{Definition of the model}
\noindent The first-passage percolation model was introduced by Hammersley and Welsh in \cite{HammersleyWelsh1965}. Let $p\in (0,1)$ and let $a$ and $b$ be two positive real numbers that satisfy $a<b$. 
Consider the graph whose vertices are elements of $\mathbb Z^d\cap [-2n,2n]^d$ (with $d\geq 2$), where two vertices $(x_1, \dots, x_d)$ and $(y_1,\dots, y_d)$ are connected by an edge if $|x_1-y_1|+\cdots+|x_d-y_d|=1$. 

Let $W_n$ denote the set of edges of this graph. The sample space is defined as $\Omega_n=\{a,b\}^{W_n}$. Each edge $e$ of the graph is independently assigned a passage time of either $a$ or $b$, with probabilities $\mathbb P(a)= p$ and $\mathbb P(b)=1-p$.  

For a fixed environment $\omega\in\Omega_n$ and a path $\gamma$, the passage time $T(\gamma,\omega)$ is defined as the sum of the values assigned to the edges of $\gamma$. For two fixed vertices $u$ and $v$, the passage time $f(u,v,\omega)$ is the random variable defined as the minimum of $T(\gamma,\omega)$ over all paths $\gamma$ connecting $u$ to $v$.

When $v$ is fixed, we will also use the notation $f_n(\omega)$, $f_n$, or simply $f$, in place of $f(0,nv,\omega)$.

For some of our results, we will only work on a simplified model from \cite{benjamini2006} that has more symmetry and fewer technical challenges: the percolation is considered on the torus $\mathbb Z_{n}^d$. We will use superscript $\tau$ and write $f_n^{\tau}(\omega)$, $f_n^{\tau}$, or $f^{\tau}$ to emphasize when we are working on this simplified torus model. 
Formally, a $d$-dimensional torus is a graph whose vertices are elements of $\mathbb Z_{n}^d$ and two vertices $u$ and $v$ are connected by an edge if and only if there is exactly one coordinate $k\in\{1,2,\dots, d\}$ such that $u_k-v_k\equiv \pm1$ (mod $n$) and $u_j=v_j$ for all $j\neq k$.
The set of admissible paths $\Gamma$ consists of all paths that wrap around the torus in the direction $x_1$. Hence, the path $(s,v_1, \dots, v_m,e)$ belongs to the set $\Gamma$ if it is a path in the graph and if the starting and ending vertices $s$ and $e$ have all the coordinates the same except for the first coordinate. Their first coordinates are $s_1=0$ and $e_1=n-1$.  The random variable $f^{\tau}_n$ is defined as the minimum of the passage times among all paths $\gamma$ that consist of adjacent edges and that wrap around the torus exactly once in the direction $x_1$.  
The function $f^{\tau}_n$ is defined as 
$
f^{\tau}_n(\omega)=\min\left\{T(\gamma,\omega): \gamma \in \Gamma
\right\}.
$

A path $\gamma$ is called {\em geodesic} if the minimum $f_n(\omega)$ (or $f^{\tau}_n$, depending on which problem we are studying) is attained at $\gamma$, i.e. if $f_n(\omega)=T(\gamma,\omega)$.

\subsection{Definition of environment derivatives}
If we denote by $W_n$ the set of all edges, then the sample space is $\Omega_n=\{a,b\}^{W_n}$. We will often omit the subscript $n$ when there is no danger of confusion. For each edge $j$ and each $\omega\in \Omega$, we define $\sigma_j^a(\omega)$ as the outcome from $\Omega$ whose passage time over the edge $j$ is changed from $\omega_j$ to $a$, regardless of what the original value $\omega_j$ was. The operation $\sigma_j^b$ is defined in an analogous way. Formally, for $\delta\in\{a,b\}$, we define $\sigma_j^{\delta}:\Omega\to\Omega$ with  
\[
\left[\sigma_j^{\delta}(\omega)\right]_k = \left\{\begin{array}{ll}\omega_k,& k\neq j,\\
\delta, &k=j.\end{array}\right.  
\]
If $\varphi:\Omega\to\mathbb R$ is any random variable, then the {\em first order environment derivative} $\partial_j\varphi$ is the random variable defined as 
\[
\partial_j\varphi=\varphi\circ \sigma_j^b-\varphi\circ \sigma_j^a.  
\]

For two distinct edges $i$ and $j$, we will give the name {\em second order environment derivative} to the quantity $\partial_{i}\partial_{j}\varphi$. In general, if $S$ is a non-empty subset of $W$, the operator $\partial_S\varphi$ is defined recursively as 
\[
\partial_S\varphi = \partial_{S\setminus\{j\}}\left(\partial_j\varphi\right), 
\]
where $j$ is an arbitrary element of $S$. The definition is independent on the choice of $j$, since a simple induction can be used to prove that for $S=\{s_1,\dots, s_m\}$, the following holds 
\[
\partial_S\varphi=\sum_{\theta_1\in\{a,b\}} \cdots\sum_{\theta_m\in\{a,b\}} (-1)^{\mathbf1_a(\theta_1)+ \cdots+\mathbf1_a(\theta_m)}\varphi\circ\sigma_{s_1}^{\theta_1} \circ\cdots\circ \sigma_{s_m}^{\theta_m}.  
\]
The function $\mathbf1_a:\{a,b\}\to \{0,1\}$ assigns the value $1$ to $a$ and $0$ to $b$.

\subsection{Variance decomposition and Fourier levels} The variances of $f_n$ and $f_n^{\tau}$ can be
written in terms of environment derivatives. 
\begin{theorem}
\label{thm|varianceFormula}
Let $f$ be a random variable on $\Omega_n$. The following equality holds
\begin{align}
\text{var}(f)&=\sum_{M\subseteq W, M\neq\emptyset}\left(p(1-p)\right)^{|M|}\left(\mathbb E\left[\partial_Mf\right]\right)^2. \label{eqn|varianceFormula}
\end{align}
\end{theorem}

Let us define Fourier level $\Sigma_j$ as 
\begin{align*}
\Sigma_j(f)
&=
\sum_{\substack{M\subseteq W,\;|M|=j}}
\left(\mathbb{E}\big[\partial_M f\big]\right)^2.
\end{align*}

Then, for each $k$, we can break \eqref{eqn|varianceFormula} into the {\em lower sum} $L_k$ and the {\em remainder} $R_k$
\begin{align*}
\text{var}(f)&=L_k(f)+R_k(f),\quad\text{where} \\
\quad L_k(f)&=\sum_{j=1}^{k-1}(p(1-p))^j\Sigma_j(f)\; \text{ and }\; R_k(f)=\sum_{j=k}^{\infty}(p(1-p))^{j}\Sigma_{j}(f). 
\end{align*} 
 
\subsection{Bound on the lower sum $L_k$}\label{subsec|BLower}
On the torus model, we can prove that for fixed $k$ and fixed $\zeta\in(0,1)$, the lower sum $L_k(f^{\tau})$ is at most $O( n^{1+\zeta-\zeta d})$. 
In dimensions $d \geq 3$, choosing $\zeta > 1/(d-1)$ yields $1 - \zeta(d-1) < 0$, 
so the lower sums converge to $0$. We believe this also holds for general model, not only torus model. We also believe this holds for $\zeta=1$. The case $\zeta<1$ can be handled by a general functional analysis technique that relies on Beckner-Bonami inequality. The case $\zeta=1$ must be handled by combinatorics. We were able to extend the bounds for $\Sigma_1$ and $\Sigma_2$ to $\zeta=1$ \cite{SecondPaper2026}, but a more comprehensive combinatorial analysis would be needed for higher levels. Below is the detailed formulation of the theorem that we will prove in this paper.

\begin{theorem}\label{thm|SumLowerFourierLevels}
For every $n\in \mathbb N$, every $k\in\mathbb N$, and every $\zeta\in(0,1)$, the following holds
\begin{align}
L_k(f^{\tau})&\leq C(\zeta,k,p,a,b)\cdot n^{1+\zeta-\zeta d},\quad\text{ where }\label{eqn|SumLowerFourierLevels}\\
C(\zeta,k,p,a,b) &=p(1-p)\cdot (b-a)^2\cdot \left(\frac{b}{ap}\right)^{1+\zeta} \nonumber\\
&\quad\times\frac{\left(\frac{1+\zeta}{(1-\zeta)p(1-p)}\right)^{k-1}-1}{\frac{1+\zeta}{(1-\zeta)p(1-p)}-1}.\label{eqn|CoefficientInFrontOfSumOfLevels}
\end{align}
\end{theorem}

\subsection{Bound on the remainder $R_k$} \label{subsec|BRemainder}
 The following result holds for general random variables. Hence, it applies both to $f$ and $f^{\tau}$.
\begin{theorem}\label{thm|BoundRemainder}
 For every integer $k\geq 1$, there exists a real constant $C$ and an integer $n_0$ such that 
for $n\geq n_0$, and every random variable $f$ on $\Omega_n$ its remainder $R_k=R_k(f)$ satisfies
\begin{align}
R_k(f)&\leq 
C\cdot \sum_{M\subseteq W, |M|=k, \|\partial_Mf\|_1\neq 0}\frac{ \left\|\partial_Mf\right\|_2^2}{
1+\left(\log\frac{\|\partial_Mf\|_2}{\|\partial_Mf\|_1}\right)^k
},
\label{eqn|BoundRemainder}
\end{align} 
where $\|g\|_p$ is the $L^p$-norm of the function $g$ defined as \[\|g\|_p=\left(\int |g|^p\,d\mathbb P\right)^{1/p}=\left(\mathbb E[|g|^p]\right)^{1/p}.\]
\end{theorem}

\subsection{Conjectures about variance in higher dimensions}
Theorem~\ref{thm|SumLowerFourierLevels} casts the variance 
conjecture in a new light, not by bringing it closer to resolution, 
but by making it more mysterious. In dimensions $d\geq 3$, the lowest Fourier levels contribute at most $O( n^{1+\zeta-\zeta d}) $ to the variance. The value $\zeta$ can be chosen to make the exponent of $n$ negative. If the 
widely conjectured bound $\mathrm{var}(f)=O(n^{2\chi})$ with 
$\chi>0$ holds in dimensions greater than or equal to $3$, then the divergence cannot come from the lowest levels. It must come entirely from higher Fourier 
levels. This would be a striking phenomenon: the cumulative effect of higher-order interactions produces 
polynomial growth. On the other hand, if higher Fourier levels are 
also well-controlled, as we conjecture, particularly in dimensions 
$d\geq 3$ where the $L^2$ norms $\|\partial_M f\|_2$ may decay 
exponentially in $k$, then the variance itself would be $O(1)$, 
implying $\chi=0$ in higher dimensions. Either conclusion would be 
remarkable, and both remain out of reach.

\subsection{Methods used in proofs and overview of literature}

The inequality \eqref{eqn|BoundRemainder} in the case $k=1$ was proved by Talagrand \cite{talagrand1994}. In \cite{Tanguy2018}, Tanguy generalized Talagrand's inequality to include second order environment derivatives. In \cite{Tanguy2018}, the second order result was
\begin{align}
\text{var}(f)&\leq C\left(\sum_{i} \|\partial_if\|_{1+e^{-2s_0}}^2 + 
\sum_{M\subseteq W, |M|=2, \|\partial_Mf\|_1\neq 0}\frac{ \left\|\partial_Mf\right\|_2^2}{
1+\left(\log\frac{\|\partial_Mf\|_2}{\|\partial_Mf\|_1}\right)^2
}\right).
\label{eqn|Tanguy}
\end{align}
The bound \eqref{eqn|Tanguy} treats the $k=2$ case for Boolean functions on the uniform discrete cube 
$\{-1,1\}^n$ via the Bonami-Beckner semigroup. Tanguy \cite{Tanguy2018} remarked that the proof technique could be extended to general $k$, while noting that the notation would become heavy. Further iteration of the semigroup machinery would likely introduce additional complexities in the first summation.

The results of Talagrand \cite{talagrand1994} and Benjamini, Kalai, and Schramm \cite{benjamini2006}
show how inequalities with first order environment derivatives lead to bounds on the variance of the form $\text{var}(f_n)\leq C\cdot \frac{n}{\log n}$ and $\text{var}(f^{\tau}_n)\leq C\cdot\frac{n}{\log n}$, for some constant $C$.

In dimension $d = 1$, the right-hand side of \eqref{eqn|SumLowerFourierLevels} becomes $O(n)$, which 
is consistent with the central limit theorem: the passage time $T(0,ne_1)$ 
is a sum of $n$ i.i.d.\ variables with variance $\Theta(n)$, and the bound 
is trivially satisfied. The bound becomes informative only in $d \geq 2$, 
where the FPP variance is conjectured to be sublinear in $n$.

The conjectured upper bound for the variance in first-passage percolation is $C\cdot n^{2\chi}$, where $\chi$ is an exponent that depends on the dimension. 
Current predictions suggest that $\chi$ is $\frac13$ in the two-dimensional case \cite{AuffingerDamronHanson2017}. 
A bound of $\chi\leq\frac12$ was established by Kesten in 1993 \cite{Kesten1993}, and to date, there is no formal proof that $\chi$ is strictly less than $\frac12$. 
In dimensions higher than two, even conjectural values for $\chi$ remain unclear. According to \cite{AuffingerDamronHanson2017}, there are reasons to believe that $\chi$ 
remains strictly positive in all dimensions, though it tends to $0$ as $d\to\infty$. 
The existence and location of a critical dimension, above which the variance ceases to grow as $n^{\chi}$, is studied in the probability literature \cite{AlexanderFPPCriticalDim2023} and predicted in the physics literature \cite{Fogedby2006}. 
Our bound \eqref{eqn|SumLowerFourierLevels} is consistent with this behavior: in dimension $3$ and higher, we can choose $\zeta$ for which the exponent of $n$ becomes negative. 
The coefficient $C(\zeta,k,p,a,b)$ in \eqref{eqn|CoefficientInFrontOfSumOfLevels} is exponential in $k$ and goes to infinity when $\zeta$ approaches $1$.

It is worth noting that the value $\chi=\frac13$ was rigorously established by Johansson in a related model known as the totally asymmetric simple exclusion process (TASEP) \cite{Johansson2000}. 
The TASEP model belongs to a class of exactly solvable models that can be analyzed using techniques from random matrix theory. In this setting, a central limit theorem has been proven, with the limiting 
distribution given by the Tracy-Widom distribution for the
largest eigenvalue \cite{TracyWidom1994}.

The best current variance bound for first-passage percolation is $C\cdot\frac{n}{\log n}$. It is obtained by Benjamini, Kalai, and Schramm \cite{benjamini2006}. 
Their approach relies on Talagrand's inequality \cite{talagrand1994}. After applying the inequality, they use symmetries of the first-passage percolation models. 

Our Theorem \ref{thm|BoundRemainder} generalizes Talagrand's inequality in the sense that the latter becomes a special case when $k=1$. In the torus model and the case $k=2$, it is possible to improve the denominator 
to $(\log n)^2$ at the cost of introducing the term $\mathbb E\left[N_2\right]$ in the numerator. More precisely,

\begin{corollary}\label{thm|Consequence2} The pair of edges $(i,j)$ is called {\em convoluted} on the outcome $\omega\in \Omega$, if $\partial_i\partial_jf(\omega)\neq 0$.   
If $N_2$ is the random variable that represents number of convoluted pairs of edges, then there exist constants $C$, $\hat C$, $C'$ and an integer $n_0$ such that for $n\geq n_0$, the variance of $f^{\tau}$ on torus satisfies 
\begin{align}
\text{var}(f^{\tau}) &\leq C\frac{\sum_{|M|=2}\|\partial_Mf^{\tau}\|_2^2}{(\log n)^2}+C'\leq \hat C\frac{\mathbb E[N_2]}{(\log n)^2}+C'.  \label{eqn|convN} 
\end{align}
\end{corollary}

Currently, we are unable bound 
$\mathbb E[N_2]$ by $n$; hence, our result does not improve upon the best-known bound of $\frac{n}{\log n}$.   
We conjecture that $\mathbb E[N_2]$ and the $L^2$ norms $\|\partial_M f\|_2$ are small--especially in dimensions $d\geq 3$, where the decay could potentially be exponentially fast.
However, these quantities remain difficult to analyze at present.

A complete understanding of the environment derivatives is equivalent to a complete understanding of the variance, due to \eqref{eqn|varianceFormula}. 
Theorem \ref{thm|varianceFormula} is equality. Together with 
$\text{var}(f)\leq C\frac n{\log n}$, 
 it can be used to derive certain $L^2$-bounds on environment derivatives. 
We will list in 
Section \ref{sec|L2Bounds} some conjectures that would be sufficient for algebraic improvements on the variance bound.  
The equation \eqref{eqn|varianceFormula} is not particularly surprising--it is the Parseval’s identity for the Fourier expansion. 
Talagrand, as well as Benjamini, Kalai, and Schramm, have previously employed the Fourier expansion of variance, but skillfully avoided dealing with the coefficients directly by relying on 
clever bounding techniques. Similar techniques 
and different forms of the variance equations were developed in \cite{Tal2017} and \cite{Przybylowski2024} for Boolean circuits and functions.
We believe that there is a special value of 
\eqref{eqn|varianceFormula} because it expresses the coefficients in terms of environment derivatives.   
 
In summary, the ultimate goal is to control the $L^2$-norms of the environment derivatives $\partial_M f$, as these norms are directly tied to the variance. At present, however, we are unable 
to effectively bound these $L^2$-norms. 

This paper makes progress in analyzing the environment derivatives. We introduce the concepts of essential and influential edges and study the relationship between these categories of edges. The results that follow make it possible to further clarify the anomalous changes of the geodesic that can happen with minimal changes in the environments. One of major benefits is the ability to
bypass intuitive representations--representations that become increasingly difficult to construct when the sets 
$M$ contain more than a few elements.

The fundamental results about essential and influential edges that were derived in this paper paved a way for further study of environment derivatives and obtain almost sure bounds, \cite{SecondPaper2026}. The derivatives of order $1$ are obviously bounded by $(b-a)$ from above and $0$ from below.
For $k\geq 2$, it is possible to construct examples where the environment derivative of order $k$ are equal to $\binom{k-2}{\lceil\frac{k-2}2\rceil}(b-a)$ and examples where the derivatives are 
$-\binom{k-2}{\lceil\frac{k-2}2\rceil}(b-a)$. 
For $k\in\{2,3,4\}$ these binomial coefficients turn out to be $1$, $1$, and $2$. 
We can prove that for $k\in\{1,2,3,4\}$ these examples are in some sense the worst case scenarios, i.e. that the environment derivatives are bounded by the above binomial coefficients.

We conjecture that these binomial coefficients are the actual bounds for all $k$, but we can't prove this at the moment. We have an ongoing work at building computer-assisted proofs for almost sure bounds of higher order.

The proof of Theorem \ref{thm|BoundRemainder} relies on the Beckner–Bonami inequality from \cite{beckner1975} and \cite{Bonami1970}, similar to Talagrand’s original approach. In our proof, we clearly separate probabilistic components from algebraic manipulations and extend the variance decomposition to gain higher powers in the denominator. The logarithmic improvement in the denominator is more transparent in our presentation due to this clearer separation between probability and algebra.

We modified Talagrand's method by generalizing his operator $\Delta_i$ (denoted $\rho_i$ in \cite{benjamini2006}). Talagrand's operator is defined as 
\[\Delta_if(\omega)=f(\sigma_i(\omega))-f(\omega),\]
 where $\sigma_i(\omega)$ denotes the environment in which the passage time over edge $i$ is changed from its original value. 
 Our first-order environment derivative $\partial_i$ is defined as \[\partial_if(\omega)=f(\sigma_i^b(\omega))-f(\sigma_i^a(\omega)).\] 
 This seemingly small change leads to significant improvements in clarity, particularly in identifying edges that belong to geodesics and those for which the environment derivatives are nonzero. In addition, the integration-by-parts formulas become much simpler with the operator $\partial_i$, as it is a more natural extension of the classical derivative than $\Delta_i$.  If the denominator $(b-a)$ were introduced to normalize the derivative, the ordering of terms would align with the numerator $f\circ\sigma_i^b-f\circ\sigma_i^a$ of $\partial_if$. 
 
We generalize this environment derivative to higher orders, which allows us to make a tradeoff after applying the Beckner–Bonami inequality. This tradeoff improves the denominator from $\log n$ to $(\log n)^k$, 
but at the cost of introducing the $L^2$-norms $\|\partial_Mf\|_2^2$ of the higher-order environment derivatives into the numerator. As mentioned earlier, these $L^2$-norms are not easy to control. 
 We hope that other researchers will explore the theory of environment derivatives further, as they show promise for deeper understanding and improved bounds.
 
 When the edge passage times are supported on $\{a,b\}$, as in the model studied in this paper, geodesics may not be unique. It is expected that there will be numerous sufficiently disjoint geodesics, which would imply a small number of influential edges. This, in turn, could make it easier to obtain bounds on $N_2$. The study of geodesics has produced several important results and highly credible conjectures. Notably, as the size of the environment grows, at least two infinite geodesics are expected to emerge \cite{Hoffman2008}. Infinite geodesics are also known to coalesce with high probability \cite{Alexander2023},  \cite{Seppalainen2020}, \cite{KrishnanRassoulAghaSeppalainen2023}. 

If the edge passage times are continuously distributed, geodesics are unique, and the event $A_i=\{\partial_if\neq 0\}$ 
coincides with the event that edge $i$ is essential--that is that is, every geodesic passes through $i$.  
Benjamini, Kalai, and Schramm studied the discrete case and encountered a major challenge: proving that the probability of $A_i$ decays as $n^{-\xi}$. 
If the event $A_i$ occurs, we say that the edge $i$ is influential. The authors of \cite{benjamini2006} proposed a simpler problem: prove that $\mathbb P(A_i)\to 0$. 
This problem was resolved recently. 
In the continuous setting, we now have bounds of the form $\mathbb P(A_j)\leq Cn^{-\xi}$. 
The first such results appeared in \cite{DamronHanson2017}, were strengthened in \cite{AhlbergHoffman2019} (which removed differentiability assumptions), and culminated in polynomial bounds in \cite{DembinElboimPeled2024}.

 Over the past 20 years, the method developed by Benjamini, Kalai, and Schramm has been successfully used to bound variances in numerous problems, many now categorized as superconcentration problems \cite{chatterjee2014} or part of the Kardar–Parisi–Zhang (KPZ) universality class \cite{AlbertsKhaninQuastel2014}, \cite{CorwinGhosalHammond2021}. First-passage percolation models can also be viewed as extreme cases of random polymers in the zero-temperature limit \cite{Zygouras2024}.

In \cite{benaim2008} and \cite{DamronHansonSosoe2015}, the $\frac{n}{\log n}$ variance bound was extended to a large class of distributions. The exponent $\chi$, discussed earlier, is called the fluctuation exponent. It is related to the transversal exponent $\xi$, defined as the number for which $C\cdot n^{\xi}$ is the maximal distance from the geodesic to the straight line between the starting and ending point. 
The exponents $\chi$ and $\xi$ satisfy the KPZ scaling relation $\chi=2\xi-1$. The inequality $\chi\geq 2\xi-1$ was proved in \cite{NewmanPiza1995}, while the reverse inequality $\chi\leq 2\xi-1$ was first 
shown in \cite{chatterjee2013},  then generalized and simplified in \cite{AuffingerDamron2014}. These scaling exponents are closely tied to the asymptotic shape of the balls in the first-passage percolation metric; see \cite{CoxDurrett1981} and \cite{ChatterjeeDey2016}.

The models we study in this paper are discrete. However, there have been successful generalizations to models where graphs consist of points scattered in Euclidean space \cite{HowardNewman1997}. Scaling relations and large deviation estimates have been established for both these spatial models and traditional lattice models in \cite{BasuGangulySly2021} and \cite{BasuSidoraviciusSly2023}. In a broad class of first-passage percolation models, the limit shape has been shown to be differentiable \cite{BakhtinDow2024}.
These problems become even more continuous when framed in terms of random Hamilton-Jacobi equations. For generalizations of the law of large numbers and central limit theorems in this context, see \cite{RezakhanlouTarver2000}, \cite{ArmstrongCardaliaguetSouganidis2014}, and \cite{DaviniKosyginaYilmaz2023}. Variance bounds of the order $\frac{n}{\log n}$ have also been obtained in this continuous PDE setting in \cite{MaticNolen2012}.

\section{Essential and influential edges}
We will distinguish four categories to which an edge of the graph can belong. These categories may overlap but are conceptually distinct. Most edges will not belong to any of them. 

\begin{definition} \label{def|Essential} An edge $j\in W_n$ is called {\em essential} on the environment $\omega$ if every geodesic passes through $j$. We will denote by $E_j$ the event that the edge $j$ is essential. 
\end{definition}

\begin{definition} \label{def|SemiEssential} An edge $j\in W_n$ is called {\em semi-essential} on the environment $\omega$ if at least one geodesic passes through $j$. We will denote by $\hat E_j$ the event that the edge $j$ is semi-essential. 
\end{definition}

\begin{definition} \label{def|Influential} An edge $j\in W_n$ is called {\em influential} if $\partial_jf(\omega)\neq 0$. We will denote by $A_j$ the event that the edge $j$ is influential. 
\end{definition}

\begin{definition} \label{def|VeryInfluential} An edge $j\in W_n$ is called {\em very influential} if $\partial_jf(\omega)= b-a$. We will denote by $\hat A_j$ the event that the edge $j$ is very influential. 
\end{definition} 

Since the passage times across edges have a discrete distribution, there may be multiple geodesics between two fixed endpoints. We will show that, in general, the four categories defined above are distinct. Later in this section, we will prove that the general relationship between the events $A_j$, $\hat A_j$, $E_j$, and $\hat E_j$ is captured by the Venn diagram in Figure \ref{fig|relationshipBetweenAE}. 

\begin{figure}[t]
\centering
\begin{tikzpicture}[scale=0.6]
\draw[thick] (2.6,2) ellipse (2.6 and 1.5);
\draw[thick] (4.6,2) ellipse (2.6 and 1.5);
\draw[thick] (3.20,1.9) ellipse (0.6 and 0.3);
\draw[thick] (4.00,1.9) ellipse (0.6 and 0.3);
\node[right] at (0.00,3.4) {$\hat E_j$};
\node[left]  at (7.25,3.4) {$A_j$};
\node[right] at (2.50,2.6) {$E_j$};
\node[left]  at (4.70,2.6) {$\hat A_j$};
\node at (3.6,0) {(a) general case};

\draw[thick] (11.6,2) ellipse (3.5 and 1.5);
\draw[thick] (12.7,2) ellipse (1.8 and 0.85);
\draw[thick] (13.0,2) ellipse (0.7 and 0.35);
\node[right] at (8.3,3.4) {$\hat E_j$};
\node at (10.3,2.7) {$A_j{=}\hat A_j$};
\node at (12.0,2.0)  {$E_j$};
\node at (11.6,0) {(b) $a/(b{-}a)\in\mathbb N$};
\end{tikzpicture}
\caption{Relationship between essential, semi-essential, influential, and very influential sets. Left: general case. Right: when $a/(b-a)\in\mathbb N$, we have $A_j=\hat A_j$, so the four events collapse into three nested ones.}
\label{fig|relationshipBetweenAE}
\end{figure}

The inclusion $E_j\subseteq A_j$ is the most important of all of the inclusions from the diagram. Although $E_j$ is subset of $A_j$, we will prove later in Theorem \ref{thm|BoundMVGeneral} that 
the two events have comparable probabilities, i.e. $\mathbb P(A_j)\leq \mathbb P(E_j)/p$. It is natural to conjecture that all of the events $E_j$, $A_j$, $\hat E_j$ and $\hat A_j$ have comparable probabilities. We didn't need
this full result in our paper, and the proof does not look obvious. Here is the formal conjecture.  

\begin{conjecture} There exists a constant $C$ independent on $n$ such that 
\begin{align*}\mathbb P(A_j)&\leq C\cdot \mathbb P(\hat A_j),\\
\mathbb P(\hat E_j)&\leq C\cdot \mathbb P(E_j),
\quad\text{ and }\\ \mathbb P(\hat E_j)&\leq C\cdot \mathbb P(\hat A_j).
\end{align*}
\end{conjecture}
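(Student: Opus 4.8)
The plan is to first note that the three inequalities are not independent. From $\partial_j f = b-a \neq 0$ we get $\hat A_j \subseteq A_j$, and $E_j \subseteq \hat E_j$ is trivial; combined with $E_j \subseteq A_j$ and the bound $\mathbb P(A_j) \leq \mathbb P(E_j)/p$ of Theorem~\ref{thm|BoundMVGeneral}, this already gives $\mathbb P(E_j) \leq \mathbb P(A_j) \leq \mathbb P(E_j)/p$. Granting the third inequality $\mathbb P(\hat E_j) \leq C\,\mathbb P(\hat A_j)$, we then obtain $\mathbb P(E_j) \leq \mathbb P(\hat E_j) \leq C\,\mathbb P(\hat A_j)$, hence $\mathbb P(A_j) \leq \mathbb P(E_j)/p \leq (C/p)\,\mathbb P(\hat A_j)$, which is the first inequality; and $\mathbb P(\hat E_j) \leq C\,\mathbb P(\hat A_j) \leq C\,\mathbb P(A_j) \leq (C/p)\,\mathbb P(E_j)$, which is the second. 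So it suffices to prove $\mathbb P(\hat E_j) \leq C\,\mathbb P(\hat A_j)$.

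\textbf{Splitting by the value on $j$.} I would then decompose $\hat E_j$ according to $\omega_j$. A short computation with passage times shows $\{\omega_j = b\} \cap \hat E_j \subseteq \hat A_j$: if a geodesic of $\omega$ uses $j$ while $\omega_j = b$, then resetting the weight of $j$ to $a$ lowers that geodesic by exactly $b-a$ while every $j$-avoiding path keeps cost at least $f(\omega)$, so $\partial_j f(\omega) = b-a$. On $\{\omega_j = a\} \cap \hat E_j$ we have $\omega = \sigma_j^a(\omega)$, and if moreover some geodesic of $\omega$ avoids $j$ then $f \circ \sigma_j^b = f \circ \sigma_j^a$, so $A_j$ (hence $\hat A_j$) fails; call this event $R_j \subseteq A_j^c$. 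Hence $\hat E_j \subseteq \hat A_j \cup (\{\omega_j = a\} \cap E_j) \cup R_j$, and it remains to show $\mathbb P(\{\omega_j = a\} \cap E_j) \leq C\,\mathbb P(\hat A_j)$ and $\mathbb P(R_j) \leq C\,\mathbb P(\hat A_j)$. On each of these events $j$ is essential at cost $a$ (or nearly so), but there is a $j$-avoiding path whose passage time beats the minimum by strictly less than $b-a$; reaching $\hat A_j$ means enlarging this detour penalty up to $b-a$.

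\textbf{Local resampling.} For the two remaining inequalities I would run a local resampling (energy-versus-entropy) argument, analogous to the mechanism behind Theorem~\ref{thm|BoundMVGeneral}: fix a box $B$ of radius $r = r(d,a,b)$ around the endpoints of $j$, condition on the configuration outside $B$, and view $f$ as a function of the coordinates inside $B$. On the bad event I would try to prescribe a configuration in $B$ --- a cheap channel of $a$-edges realizing a geodesic through $j$, together with a surrounding cut of $b$-edges that every $j$-avoiding path crossing $B$ must traverse --- chosen so that the detour penalty becomes at least $b-a$, forcing $\hat A_j$. Since such a modification changes only $O(r^d) = O(1)$ edges and is at most $2^{O(r^d)}$-to-one, each resampled coordinate costs a factor at least $\min(p, 1-p)$, which would yield $\mathbb P(\hat A_j) \geq c(d,a,b)\,\mathbb P(\text{bad event})$.

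\textbf{The main obstacle.} The difficulty, and the reason this is only a conjecture, is precisely the non-uniqueness that separates $\hat E_j$, $E_j$, and $\hat A_j$: the competing $j$-avoiding near-geodesic need not pass anywhere near $B$, and a bounded cut cannot penalize a detour that stays macroscopically far from $j$; moreover, in the $\{a,b\}$ model one cannot make an edge that already equals $b$ any more expensive, so there is no purely local way to inflate the detour penalty when the competitor uses only $b$-edges near $j$. Circumventing this seems to require either a structural input --- that, conditionally on $\hat E_j$, with probability bounded away from $0$ there is no such distant near-tie, a statement intertwined with the transversal geometry of near-optimal paths and hence with the poorly understood exponent $\xi$ --- or a genuinely multi-scale version of the resampling in which $r = r(n) \to \infty$ while the total resampling cost stays bounded uniformly in $n$. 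I expect this to be the hard step.
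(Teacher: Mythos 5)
This statement is posed in the paper as an open conjecture: the authors explicitly write that they did not need the full result and that ``the proof does not look obvious,'' and they offer no argument for it. So there is no proof in the paper to compare yours against, and your proposal, by your own admission in the last paragraph, does not close the conjecture either. What you do establish is correct and worthwhile: the reduction of all three inequalities to the single bound $\mathbb P(\hat E_j)\leq C\,\mathbb P(\hat A_j)$ is valid, since $\hat A_j\subseteq A_j$, $E_j\subseteq\hat E_j$, and Theorem \ref{thm|BoundMVGeneral} give $\mathbb P(E_j)\leq\mathbb P(A_j)\leq\mathbb P(E_j)/p$, after which the other two inequalities follow with constant $C/p$. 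Your splitting step is also consistent with the paper: the inclusion $\hat E_j\cap\{\omega_j=b\}\subseteq\hat A_j$ is exactly \eqref{eqn:HatAEinclusion}, and your event $R_j$ (a geodesic through $j$ and a $j$-avoiding geodesic coexisting with $\omega_j=a$) is indeed disjoint from $A_j$, so the remaining content is precisely the two bounds $\mathbb P(E_j\cap\{\omega_j=a\})\leq C\,\mathbb P(\hat A_j)$ and $\mathbb P(R_j)\leq C\,\mathbb P(\hat A_j)$.

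The genuine gap is that these two bounds are only sketched, and the local-resampling mechanism you propose cannot deliver them as stated. The obstruction is structural, not technical: Proposition \ref{thm|ABGeneral} shows that whenever integers $k_a,k_b$ with $ak_a+bk_b\in(0,b-a)$ exist, the set $E_j\setminus\hat A_j$ is nonempty, and the witnessing configurations are exactly of the type your argument must control --- the competing $j$-avoiding path loses by strictly less than $b-a$ but may run macroscopically far from $j$, so no modification of $O(1)$ edges in a fixed box around $j$ can inflate its cost (edges already at $b$ cannot be made more expensive, and a bounded cut cannot intersect a distant detour). A resampling map whose support grows with the location of the competitor would have probability cost degrading with $n$, destroying the uniform constant $C$. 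So what you have is a clean and correct reduction plus an accurate diagnosis of why the remaining inequality is hard (it is tied to the geometry of near-optimal paths and near-ties of size less than $b-a$), but not a proof; the conjecture remains open, exactly as the paper asserts.
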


Most research involving percolation models, where passage times have discrete distributions, has had to address these distinct categories of edges. 
Handling this distinction has often introduced technicalities that researchers needed to overcome. This section shows the similarities and differences among the various edge categories and summarizes their relationships. 
 
The results of this section apply to both $f$ an $f^{\tau}$; we will only present them for $f$. 
We will start with a proposition whose proof we will omit because it is straightforward.
\begin{proposition}\label{thm|Straightforward} For every $i\neq j$, every $\alpha,\beta\in\{a,b\}$, and every random variable $\varphi$, 
\begin{align} 
\sigma_i^{\alpha}\circ \sigma_i^{\beta}&=\sigma_i^{\alpha};   \label{eqn|propertyA}\\
\sigma_i^{\alpha}\circ \sigma_j^{\beta}&=\sigma_j^{\beta}\circ\sigma_i^{\alpha};  \label{eqn|propertyB} \\
(\partial_i \varphi)\circ \sigma_i^{\alpha} &= \partial_i \varphi;  \label{eqn|propertyC}
\end{align} 
\begin{align}
\partial_i\partial_i \varphi&=0;  \label{eqn|propertyD} \\
\partial_i\partial_j \varphi&=\partial_j\partial_i\varphi;  \label{eqn|propertyE} \\
\varphi\cdot 1_{\omega_i=\alpha}&=\varphi\circ \sigma_i^{\alpha}\cdot 1_{\omega_i=\alpha}.  \label{eqn|propertyF}
\end{align}
\end{proposition}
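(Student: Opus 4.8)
The plan is to reduce all six identities to the coordinatewise definition \eqref{eqn:definitionSigma} of $\sigma_i^{\delta}$ together with the definition \eqref{eqn:definitionPartialJ} of $\partial_i$, and to order the arguments so that each identity feeds into the next. First I would prove \eqref{eqn|propertyA} and \eqref{eqn|propertyB} directly from \eqref{eqn:definitionSigma} by reading off the $k$-th coordinate: $\left[\sigma_i^{\alpha}(\sigma_i^{\beta}(\omega))\right]_k$ equals $\omega_k$ when $k\neq i$ and equals $\alpha$ when $k=i$, which is precisely $\left[\sigma_i^{\alpha}(\omega)\right]_k$; and for $i\neq j$ the maps $\sigma_i^{\alpha}$ and $\sigma_j^{\beta}$ overwrite disjoint coordinates, so the $k$-th coordinate of either composition is $\omega_k$ unless $k\in\{i,j\}$, where it is $\alpha$ or $\beta$ respectively, hence the two compositions agree.

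With \eqref{eqn|propertyA} available, \eqref{eqn|propertyC} follows by composing $\partial_i\varphi=\varphi\circ\sigma_i^b-\varphi\circ\sigma_i^a$ on the right with $\sigma_i^{\alpha}$ and using $\sigma_i^b\circ\sigma_i^{\alpha}=\sigma_i^b$ and $\sigma_i^a\circ\sigma_i^{\alpha}=\sigma_i^a$; in words, $\partial_i\varphi$ does not depend on the $i$-th coordinate. Identity \eqref{eqn|propertyD} is then immediate: $\partial_i\partial_i\varphi=(\partial_i\varphi)\circ\sigma_i^b-(\partial_i\varphi)\circ\sigma_i^a=\partial_i\varphi-\partial_i\varphi=0$ by \eqref{eqn|propertyC}. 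For \eqref{eqn|propertyE} I would expand $\partial_i\partial_j\varphi$ into the alternating four-term sum $\varphi\circ\sigma_j^b\circ\sigma_i^b-\varphi\circ\sigma_j^a\circ\sigma_i^b-\varphi\circ\sigma_j^b\circ\sigma_i^a+\varphi\circ\sigma_j^a\circ\sigma_i^a$, which is the $m=2$ instance of \eqref{eqn|LeibnitzRule}, and then apply the commutativity \eqref{eqn|propertyB} to each term to see that the whole expression is symmetric under interchanging $i$ and $j$. Finally, \eqref{eqn|propertyF} is a pointwise check: on the event $\{\omega_i=\alpha\}$ the map $\sigma_i^{\alpha}$ fixes $\omega$, so $\varphi(\omega)\,1_{\omega_i=\alpha}=\varphi(\sigma_i^{\alpha}(\omega))\,1_{\omega_i=\alpha}$, while off that event both sides are zero.

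There is no substantial obstacle here; the work is entirely bookkeeping with the definition of $\sigma_i^{\delta}$. The only point that requires care, and the reason for the chosen ordering, is keeping the direction of composition consistent (the rightmost operator acts first) so that \eqref{eqn|propertyA} and \eqref{eqn|propertyB} can be invoked cleanly, and recognizing that \eqref{eqn|propertyC} — the fact that $\partial_i\varphi$ is independent of coordinate $i$ — is the one small lemma on which \eqref{eqn|propertyD} genuinely rests. This is presumably why the authors were content to omit the argument.
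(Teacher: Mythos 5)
Your proof is correct. The paper in fact omits the proof of this proposition entirely, declaring it straightforward, and your argument — coordinatewise verification of \eqref{eqn|propertyA} and \eqref{eqn|propertyB} from \eqref{eqn:definitionSigma}, then deducing \eqref{eqn|propertyC} by right-composition, \eqref{eqn|propertyD} from \eqref{eqn|propertyC}, \eqref{eqn|propertyE} from the four-term expansion plus commutativity, and \eqref{eqn|propertyF} by a pointwise check on $\{\omega_i=\alpha\}$ — is exactly the routine bookkeeping the authors had in mind, so there is nothing to add or compare.
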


\noindent{\bf Remark.} Here is an immediate consequence of 
\eqref{eqn|propertyC}: for every function $\varphi$, every 
$\xi\in\{a,b\}$, and every $B\subseteq \mathbb R$,
\begin{align} 
\left(\partial_i\varphi\right)^{-1}(B)
=\left((\partial_i\varphi) \circ \sigma_i^{\xi}\right)^{-1}(B)
=\left(\sigma_i^{\xi}\right)^{-1}\!\left((\partial_i\varphi)^{-1}(B)\right).
\label{eqn|invarianceLevelSetsAlgebraicGeneralForm}
\end{align}
We now apply \eqref{eqn|invarianceLevelSetsAlgebraicGeneralForm} to $\varphi=f$ in two special cases. 
Taking $B=\mathbb R\setminus\{0\}$ gives the first equality below, and taking $B=\{b-a\}$ gives the second. 
For every edge $i$ and every $\xi\in\{a,b\}$, 
\begin{align*}&
\left(\sigma_i^{\xi}\right)^{-1}(A_i)=A_i 
\quad\text{and}\quad 
\left(\sigma_i^{\xi}\right)^{-1}(\hat A_i)=\hat A_i. 
\end{align*}

The next theorem is one of the few results that require a combinatorial analysis of geodesics. The obtained algebraic relationships among $E_j$, $A_j$, $\hat E_j$, and $\hat A_j$, together with some general results from set theory, imply all inclusions in Figure \ref{fig|relationshipBetweenAE} (a).

\begin{theorem}\label{thm|AiDefinition} The events $E_j$, $A_j$, $\hat E_j$, and $\hat A_j$ satisfy
\begin{align}
 A_j&=(\sigma_j^a)^{-1}\left(E_j\right); \label{eqn|AiDefinition}\\
 \hat A_j&=(\sigma_j^b)^{-1}(\hat E_j). \label{eqn|HatAiDefinition}
\end{align}
\end{theorem}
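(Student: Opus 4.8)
The plan is to prove the two identities \eqref{eqn|AiDefinition} and \eqref{eqn|HatAiDefinition} separately, each by a double inclusion, using the definitions of the edge categories together with the fact that $f$ is a minimum of passage times and the values on edge $j$ lie in $\{a,b\}$ with $a<b$. The underlying intuition is simple: setting $\omega_j=a$ makes edge $j$ cheap, so a geodesic will use $j$ whenever this is at all helpful; setting $\omega_j=b$ makes it expensive, so $j$ is used only when forced. Thus ``edge $j$ is influential on $\omega$'' should be the same as ``edge $j$ is essential on the cheap environment $\sigma_j^a(\omega)$'', and ``very influential'' should correspond to ``semi-essential on the expensive environment $\sigma_j^b(\omega)$''.

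First I would record the two elementary monotonicity facts that drive everything. Writing $\gamma$ for paths and using that $T(\gamma,\cdot)$ depends on $\omega_j$ only through whether $\gamma$ uses $j$: (i) for any $\omega$, $f(\sigma_j^a(\omega)) \le f(\sigma_j^b(\omega))$, and in fact $f(\sigma_j^b(\omega)) - f(\sigma_j^a(\omega)) \in \{0, b-a\}$ --- indeed a geodesic for $\sigma_j^a(\omega)$ that avoids $j$ is also admissible for $\sigma_j^b(\omega)$ with the same cost, giving $\le 0$ for the difference after accounting, and conversely lowering $\omega_j$ from $b$ to $a$ can decrease any path's cost by at most $b-a$; (ii) $j$ is essential on an environment $\eta$ iff every geodesic of $\eta$ uses $j$, which (again by the $\{a,b\}$ structure) is equivalent to: the best path avoiding $j$ is strictly worse than the best path using $j$. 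Then for \eqref{eqn|AiDefinition}: $\omega \in A_j$ means $\partial_j f(\omega) \ne 0$, i.e. $f(\sigma_j^b(\omega)) > f(\sigma_j^a(\omega))$, i.e. (since $T(\gamma,\sigma_j^a(\omega)) = T(\gamma,\sigma_j^b(\omega))$ for paths avoiding $j$, and these are the only paths whose cost changed) the minimum over paths avoiding $j$ is strictly larger than $f(\sigma_j^a(\omega))$ --- which is exactly the statement that every geodesic of $\sigma_j^a(\omega)$ uses $j$, i.e. $j$ is essential on $\sigma_j^a(\omega)$, i.e. $\sigma_j^a(\omega) \in E_j$, i.e. $\omega \in (\sigma_j^a)^{-1}(E_j)$. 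Each ``i.e.'' is a genuine equivalence, so the set identity follows. The argument for \eqref{eqn|HatAiDefinition} is parallel: $\omega \in \hat A_j$ means $\partial_j f(\omega) = b-a$, which by fact (i) means $f(\sigma_j^b(\omega)) = f(\sigma_j^a(\omega)) + (b-a)$; unwinding, this says the best path in $\sigma_j^b(\omega)$ uses $j$ (if some geodesic of $\sigma_j^b(\omega)$ avoided $j$, it would also be admissible and cost the same in $\sigma_j^a(\omega)$, forcing the difference to be $0$, a contradiction) --- equivalently $j$ is semi-essential on $\sigma_j^b(\omega)$, i.e. $\sigma_j^b(\omega) \in \hat E_j$.

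A small technical point to handle carefully, rather than a deep obstacle, is the interaction between the operators and the indicator conditions: one should use \eqref{eqn|propertyA} (that $\sigma_j^\delta$ is idempotent and ``absorbs'' on coordinate $j$) to see that $E_j$ and $\hat E_j$, as events, are themselves determined by the environment with $\omega_j$ fixed in the relevant way, so that $(\sigma_j^a)^{-1}(E_j)$ and $(\sigma_j^b)^{-1}(\hat E_j)$ are well-defined pullbacks and the equivalences above really are equalities of subsets of $\Omega$. The main obstacle, to the extent there is one, is purely bookkeeping: making precise the claim ``the only paths whose passage time changes under $\sigma_j^a \leftrightarrow \sigma_j^b$ are those through $j$, and for those it changes by exactly $b-a$,'' and then translating ``every geodesic uses $j$'' / ``some geodesic uses $j$'' into the language of comparing $\min$ over path-through-$j$ versus $\min$ over path-avoiding-$j$. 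Once that dictionary is set up, both identities drop out immediately; this is why the authors can reasonably call the combinatorial input modest. I expect the write-up to be one short paragraph of setup (the two monotonicity facts) followed by the two chains of equivalences.
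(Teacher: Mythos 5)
Your treatment of \eqref{eqn|AiDefinition} is correct and is essentially the paper's argument (the paper phrases it as a two-sided inclusion with a case split on whether a geodesic of $\sigma_j^b(\omega)$ uses $j$; your dictionary ``minimum over $j$-avoiding paths versus minimum over paths through $j$'' is an equivalent packaging). The problem is \eqref{eqn|HatAiDefinition}, where there are two genuine errors. First, your fact (i) claims $f(\sigma_j^b(\omega))-f(\sigma_j^a(\omega))\in\{0,b-a\}$; this is false in general, and the paper is built around exactly that failure: writing $m_1$ and $m_0$ for the cheapest passage time through $j$ (with $\omega_j=a$) and avoiding $j$, respectively, one has $\partial_j f(\omega)=\min\{m_0-m_1,\,b-a\}$ whenever this is positive, which can lie strictly between $0$ and $b-a$; Proposition \ref{thm|ABGeneral} constructs such environments, i.e.\ $A_j\neq\hat A_j$ in general. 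Second, and this is the load-bearing gap, your parenthetical argument for $\hat A_j\subseteq(\sigma_j^b)^{-1}(\hat E_j)$ asserts: if some geodesic $\gamma$ of $\sigma_j^b(\omega)$ avoids $j$, then since $T(\gamma,\sigma_j^a(\omega))=T(\gamma,\sigma_j^b(\omega))$ the difference is forced to be $0$. That inference is invalid: it only yields $f(\sigma_j^a(\omega))\le T(\gamma,\sigma_j^a(\omega))=f(\sigma_j^b(\omega))$, the trivial inequality, because $\gamma$ need not be a geodesic on $\sigma_j^a(\omega)$ (on the cheap environment a path through $j$ may undercut it). Concretely, take $a=1$, $b=2$, a straight path through $j$ with all its other edges equal to $a$, and a disjoint competitor avoiding $j$ costing exactly $b-a$ more: then $\partial_j f(\omega)=b-a$, yet $\sigma_j^b(\omega)$ has a geodesic avoiding $j$. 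So the contradiction you invoke does not exist, and the statement you are actually trying to prove (every geodesic of $\sigma_j^b(\omega)$ uses $j$) is false; only the semi-essential statement is true, and it needs a different argument.

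The asymmetry matters: an avoiding geodesic on the $a$-environment kills $\partial_j f$ (that is the step in your proof of \eqref{eqn|AiDefinition}, and it is correct), but an avoiding geodesic on the $b$-environment does not. The paper's route for \eqref{eqn|HatAiDefinition} goes through $\sigma_j^a(\omega)$: from $\partial_j f(\omega)=b-a>0$ one first concludes, by the avoiding-geodesic argument applied on $\sigma_j^a(\omega)$, that every geodesic on $\sigma_j^a(\omega)$ passes through $j$; then for such a geodesic $\gamma$ the assumed equality $f(\sigma_j^b(\omega))=f(\sigma_j^a(\omega))+(b-a)=T(\gamma,\sigma_j^a(\omega))+(b-a)=T(\gamma,\sigma_j^b(\omega))$ shows that $\gamma$ is still a geodesic on $\sigma_j^b(\omega)$ and passes through $j$, so $\sigma_j^b(\omega)\in\hat E_j$. (Alternatively, your min-dictionary works if used correctly: $\partial_j f(\omega)=b-a$ iff $m_0\ge m_1+(b-a)$, which says precisely that the minimum on $\sigma_j^b(\omega)$ is attained by some path through $j$, ties allowed.) Finally, you dispose of the converse inclusion $(\sigma_j^b)^{-1}(\hat E_j)\subseteq\hat A_j$ with the word ``equivalently''; it is easy---a geodesic of $\sigma_j^b(\omega)$ through $j$ gives $f(\sigma_j^b(\omega))\ge f(\sigma_j^a(\omega))+(b-a)$, and the reverse bound is the correct half of your fact (i)---but it is half of the identity and should be written out.
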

\begin{proof} 
If we assume $ \sigma_j^a (\omega)\not \in E_j$, then there is a geodesic $\gamma$ on $\sigma_j^a(\omega)$ that does not pass through $j$. The value $f(\sigma_j^a(\omega))$ satisfies 
\begin{align*}
f(\sigma_j^a(\omega))&=T(\gamma,\sigma_j^a(\omega))=T(\gamma,\sigma_j^b(\omega))\geq f(\sigma_j^b(\omega)).
\end{align*} The monotonicity of $f$ in each coordinate implies $f(\sigma_j^a(\omega))\leq f(\sigma_j^b(\omega))$. We obtained $f(\sigma_j^a(\omega))= f(\sigma_j^b(\omega))$, and we can conclude that
 $\partial_jf(\omega)\neq 0$ implies $\sigma_j^a(\omega)\in E_j$, hence  $\{\partial_jf\neq 0\}\subseteq (\sigma_j^a)^{-1}(E_j)$.

We will now prove that $(\sigma_j^a)^{-1}(E_j)\subseteq\{\partial_jf\neq 0\}$. Assume that $\omega\in (\sigma_j^a)^{-1}(E_j)$. We need to prove that $f(\sigma_j^b(\omega))>f(\sigma_j^a(\omega))$.  Let $\gamma$ be a geodesic on $\sigma_j^b(\omega)$. There are two possibilities: $j\in \gamma$ and $j\not\in \gamma$. In the case $j\in \gamma$, we have
\begin{align*}
f(\sigma_j^b(\omega))&=T(\gamma,\sigma_j^b(\omega))=(b-a)+T(\gamma,\sigma_j^a(\omega))\\
&\geq (b-a)+f(\sigma_j^a(\omega)) >f(\sigma_j^a(\omega)).
\end{align*}
In the case $j\not \in \gamma$, the following holds
\begin{align*}
f(\sigma_j^b(\omega))&= T(\gamma,\sigma_j^b(\omega))=T(\gamma,\sigma_j^a(\omega)).
\end{align*}
However, since $\sigma_j^a(\omega)\in E_j$ and $j\not\in \gamma$, the path $\gamma$ cannot be a geodesic on $\sigma_j^a(\omega)$ and 
$T(\gamma,\sigma_j^a(\omega))>f(\sigma_j^a(\omega))$. We are allowed to conclude that $f(\sigma_j^b(\omega))>f(\sigma_j^a(\omega))$. 
This completes the proof of \eqref{eqn|AiDefinition}.

 We will now prove \eqref{eqn|HatAiDefinition}. Assume first that $\omega\in \left(\sigma_j^b\right)^{-1}(\hat E_j)$. Then, $\sigma_j^b(\omega)\in \hat E_j$, and there is a geodesic $\gamma$ 
on $\sigma_j^b(\omega)$ that passes through $j$.  \begin{align*}
f(\sigma_j^b(\omega))&=T(\gamma,\sigma_j^b(\omega))=T(\gamma,\sigma_j^a(\omega))+(b-a)\\ &\geq f(\sigma_j^a(\omega))+b-a.
\end{align*}
It remains to observe that  
$f(\sigma_j^b(\omega))\leq f(\sigma_j^a(\omega))+ b-a$. Therefore, $\omega\in \hat A_j$. We proved that 
$\left(\sigma_j^b\right)^{-1}(\hat E_j)\subseteq \hat A_j$. 

Assume now that $\omega\in \hat A_j$. From $\hat A_j\subseteq A_j$ and \eqref{eqn|AiDefinition}, we obtain $\sigma_j^a(\omega)\in E_j$. Let $\gamma$ be a geodesic on $\sigma_j^a(\omega)$. Since we assumed that $\omega\in \hat A_j$, we have 
\begin{align*}
f(\sigma_j^b(\omega))&=f(\sigma_j^a(\omega))+(b-a)=T(\gamma,\sigma_j^a(\omega))+(b-a)\\
&=T(\gamma,\sigma_j^b(\omega)).
\end{align*}
This means that $\gamma$ is a geodesic on $\sigma_j^b(\omega)$. Since $\gamma$ passes through $j$, we proved that $\sigma_j^b(\omega)\in \hat E_j$. 
\end{proof}
  
\begin{proposition}\label{thm|forComputer} The following two implications hold for every $\omega\in\Omega$.
\begin{enumerate}
\item[(a)] If $\sigma_j^a(\omega)\in E_j^C$, then $f(\sigma_j^b(\omega))=f(\sigma_j^a(\omega))$;
\item[(b)] If $\sigma_j^b(\omega)\in \hat E_j$, then $f(\sigma_j^b(\omega))=f(\sigma_j^a(\omega))+(b-a)$.
\end{enumerate}
\end{proposition}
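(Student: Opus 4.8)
The plan is to deduce both implications directly from the set identities in Theorem~\ref{thm|AiDefinition}, combined with the definitions of the events $A_j$ and $\hat A_j$; no further combinatorial analysis of geodesics is required, since that work has already been carried out in Theorem~\ref{thm|AiDefinition}.

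For part~(a), I would argue at the level of events. Taking complements in \eqref{eqn|AiDefinition} gives $(\sigma_j^a)^{-1}(E_j^C)=\big((\sigma_j^a)^{-1}(E_j)\big)^C=A_j^C$, so the hypothesis $\sigma_j^a(\omega)\in E_j^C$ is equivalent to $\omega\in A_j^C$, that is, $\partial_jf(\omega)=0$ by Definition~\ref{def|Influential}. Unfolding the definition \eqref{eqn:definitionPartialJ} of $\partial_j$ then yields $f(\sigma_j^b(\omega))=f(\sigma_j^a(\omega))$, which is the claim.

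For part~(b), I would use \eqref{eqn|HatAiDefinition} in the same way. The hypothesis $\sigma_j^b(\omega)\in\hat E_j$ is exactly $\omega\in(\sigma_j^b)^{-1}(\hat E_j)=\hat A_j$, so $\partial_jf(\omega)=b-a$ by Definition~\ref{def|VeryInfluential}, and \eqref{eqn:definitionPartialJ} gives $f(\sigma_j^b(\omega))=f(\sigma_j^a(\omega))+(b-a)$.

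The only point requiring care is the bookkeeping with preimages: one must keep straight that ``$\sigma_j^\delta(\omega)$ lies in an event $X$'' means the same as ``$\omega$ lies in $(\sigma_j^\delta)^{-1}(X)$'', and, for part~(a), that the preimage of a complement is the complement of the preimage. There is no real obstacle here; this is precisely why the proposition is flagged as a simple consequence of Theorem~\ref{thm|AiDefinition}, and why it is convenient for the computer-assisted arguments, which can then reason purely in terms of the events $E_j$ and $\hat E_j$ rather than with the values of $\partial_j f$.
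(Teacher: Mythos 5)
Your proposal is correct and follows exactly the paper's own argument: part (a) is obtained by taking complements in \eqref{eqn|AiDefinition} to get $(\sigma_j^a)^{-1}(E_j^C)=A_j^C$ and unfolding $\partial_jf=0$, and part (b) by reading \eqref{eqn|HatAiDefinition} as $\omega\in\hat A_j$ and unfolding $\partial_jf=b-a$. No gap; the only difference is that you spell out the step the paper dismisses with ``similarly.''
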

\begin{proof}
Part (a) follows directly from \eqref{eqn|AiDefinition}. If we assume $\sigma_j^a(\omega)\in E_j^C$, then \[\omega\in (\sigma_j^a)^{-1}(E_j^C)=
\left((\sigma_j^a)^{-1}(E_j)\right)^C=A_j^C.\] Similarly, part (b) is a direct consequence of \eqref{eqn|HatAiDefinition}. 
\end{proof}

The functions $\sigma_i^a$ and $\sigma_i^b$ are idempotent (a function $\psi:R\to R$ is idempotent if $\psi\circ \psi=\psi$) and they satisfy $\sigma_i^a\circ\sigma_i^b=\sigma_i^a$ and $\sigma_i^b\circ\sigma_i^a=\sigma_i^b$.
These algebraic properties, together with $E_j\subseteq \hat E_j$, $\hat A_j\subseteq A_j$, \eqref{eqn|AiDefinition}, and
 \eqref{eqn|HatAiDefinition} will have the following algebraic consequence: $E_j\subseteq A_j$ and $\hat A_j\subseteq \hat E_j$.
These inclusions (and several more results) will follow from the following general properties of images and pre-images of idempotent functions, whose proofs 
are left for the Appendix.

\begin{proposition}\label{prop|IdempotentFixedPoints} If $\psi:R\to R$ is an idempotent function, then the set of its fixed points is equal to its range, i.e. \begin{align}
\psi(R)&=\left\{x\in R: \psi(x)=x\right\}.
\label{eqn|rangeFixedPoints}
\end{align}\end{proposition}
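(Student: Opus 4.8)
The statement to prove is elementary: for an idempotent $\psi:R\to R$, its range equals its set of fixed points. Let me sketch a proof plan.

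\medskip

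The plan is to prove the set equality $\psi(R)=\{x\in R:\psi(x)=x\}$ by establishing the two inclusions separately. For the inclusion $\psi(R)\subseteq\{x:\psi(x)=x\}$, I would take an arbitrary $y\in\psi(R)$, so that $y=\psi(z)$ for some $z\in R$. Applying $\psi$ to both sides and using idempotency $\psi\circ\psi=\psi$ gives $\psi(y)=\psi(\psi(z))=(\psi\circ\psi)(z)=\psi(z)=y$, so $y$ is a fixed point. For the reverse inclusion $\{x:\psi(x)=x\}\subseteq\psi(R)$, I would take any $x$ with $\psi(x)=x$; then $x=\psi(x)\in\psi(R)$ trivially, since $x$ is the image of itself under $\psi$.

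\medskip

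Combining the two inclusions yields the claimed equality. There is essentially no obstacle here: the only ingredient is the definition of idempotency, $\psi\circ\psi=\psi$, evaluated pointwise as $\psi(\psi(x))=\psi(x)$ for all $x\in R$, and the definitions of range and fixed-point set. The ``hard part,'' such as it is, is merely being careful that the first inclusion genuinely uses idempotency (it does) while the second is purely formal (it does not). No hypotheses beyond idempotency — in particular no injectivity, surjectivity, or structure on $R$ — are needed, and the argument is finite and self-contained, which is presumably why the paper defers it to the appendix.
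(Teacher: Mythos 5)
Your proposal is correct and matches the paper's own argument essentially verbatim: the forward inclusion via $\psi(y)=\psi(\psi(z))=\psi(z)=y$ using idempotency, and the reverse inclusion observed to be immediate since a fixed point is its own image. Nothing further is needed.
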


\begin{proposition}\label{prop|IdempotentPsiXi} Assume that $\psi:R\to R$ and $\xi: R\to R$ are two idempotent functions that satisfy $\psi\circ \xi =\psi$. If $Q$ is any subset of $R$, then 
\begin{align}
&\psi\left(\psi^{-1}(Q)\cap \xi(R)\right)=\psi\left(\psi^{-1}(Q)\right)=Q\cap \psi(R)  =\psi^{-1}(Q)\cap \psi(R). \label{eqn|generalResultIdempotentPsiXi}
\end{align}
\end{proposition}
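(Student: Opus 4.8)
The plan is to prove the four–term chain of equalities in \eqref{eqn|generalResultIdempotentPsiXi} one link at a time, working outward from the middle and relying on Proposition \ref{prop|IdempotentFixedPoints} to replace the assertion ``$y\in\psi(R)$'' by ``$\psi(y)=y$'' (and similarly for $\xi$) whenever that is convenient.

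First I would establish the two equalities that do not involve $\xi$, namely $\psi(\psi^{-1}(Q))=Q\cap\psi(R)=\psi^{-1}(Q)\cap\psi(R)$. For the first of these: if $y=\psi(x)$ with $\psi(x)\in Q$, then at once $y\in Q$ and $y\in\psi(R)$; conversely, if $y\in Q\cap\psi(R)$, then $\psi(y)=y$ by Proposition \ref{prop|IdempotentFixedPoints}, so $\psi(y)=y\in Q$ shows $y\in\psi^{-1}(Q)$ and hence $y=\psi(y)\in\psi(\psi^{-1}(Q))$. The same fixed–point observation gives $Q\cap\psi(R)=\psi^{-1}(Q)\cap\psi(R)$, since for $y\in\psi(R)$ the conditions $y\in Q$ and $\psi(y)\in Q$ are the same.

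It remains to prove $\psi\!\left(\psi^{-1}(Q)\cap\xi(R)\right)=\psi\!\left(\psi^{-1}(Q)\right)$. The inclusion $\subseteq$ is immediate because we intersect with $\xi(R)$ before applying $\psi$. For $\supseteq$, take $y\in\psi(\psi^{-1}(Q))$; by the previous paragraph this means $y\in Q\cap\psi(R)$, so in particular $\psi(y)=y$. The idea is to produce a preimage of $y$ that already lies in $\xi(R)$, and the natural candidate is $x:=\xi(y)$. Idempotence of $\xi$ gives $\xi(x)=\xi(\xi(y))=\xi(y)=x$, so $x\in\xi(R)$ by Proposition \ref{prop|IdempotentFixedPoints}; the hypothesis $\psi\circ\xi=\psi$ together with $\psi(y)=y$ gives $\psi(x)=\psi(\xi(y))=\psi(y)=y$, so $\psi(x)=y\in Q$, i.e. $x\in\psi^{-1}(Q)$. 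Thus $x\in\psi^{-1}(Q)\cap\xi(R)$ with $\psi(x)=y$, which is exactly what is needed, and the chain of equalities is complete.

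I do not expect any step to pose a real obstacle: the statement is elementary set–theoretic bookkeeping about idempotent maps. The only point that requires a small idea rather than a mechanical unwinding of definitions is the inclusion $\supseteq$ in the last equality, where one must choose the right preimage $x=\xi(y)$ and invoke the interaction hypothesis $\psi\circ\xi=\psi$ precisely there; everything else reduces to substituting definitions and applying Proposition \ref{prop|IdempotentFixedPoints}.
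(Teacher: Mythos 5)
Your proof is correct and follows essentially the same route as the paper: the nontrivial direction is handled by the identical device of taking $\xi$ of the given element as the preimage and invoking $\psi\circ\xi=\psi$ together with the fixed-point characterization of $\psi(R)$ from Proposition \ref{prop|IdempotentFixedPoints}. The only cosmetic difference is that you verify the equalities pairwise, whereas the paper closes a cycle of four inclusions.
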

 
 \begin{proposition}\label{prop|IdempotentPsiAPsiB} Assume that $\psi^a:R\to R$ and $\psi^b: R\to R$ are two idempotent functions that satisfy $\psi^a(R)\cup \psi^b(R)=R$. 
  If  $E$ and $\hat E$ are two subsets of $R$ that  satisfy $E\subseteq \hat E$ and  $(\psi^b)^{-1}(\hat E)\subseteq (\psi^a)^{-1}(E)$, then 
 \begin{align} E  &\subseteq  (\psi^a)^{-1}(E) , \label{eqn|InclusionAEGen}\\ 
  (\psi^b)^{-1}(\hat E)&\subseteq  \hat E.  \label{eqn|InclusionHatAEGeni} 
  \end{align}
 \end{proposition}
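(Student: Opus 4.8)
\textbf{Proof proposal for Proposition~\ref{prop|IdempotentPsiAPsiB}.}

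The plan is to derive both inclusions purely by set-theoretic manipulation, using Proposition~\ref{prop|IdempotentFixedPoints} (range = fixed-point set) and Proposition~\ref{prop|IdempotentPsiXi} applied to the pairs $(\psi^a,\psi^b)$ and $(\psi^b,\psi^a)$. Note first that the hypothesis $\psi^a(R)\cup\psi^b(R)=R$ gives, for each pair, the relation needed to invoke Proposition~\ref{prop|IdempotentPsiXi}: indeed $\psi^a\circ\psi^b=\psi^a$ follows because for any $x$, $\psi^b(x)$ lies in $\psi^b(R)$, and one checks $\psi^a$ agrees on $x$ and $\psi^b(x)$ — more carefully, this is exactly the kind of identity the $\sigma$-operators satisfy and which in the abstract setting I would either add as a hypothesis or extract; in the paper's intended application $\psi^a=\sigma_j^a$, $\psi^b=\sigma_j^b$ and $\sigma_j^a\circ\sigma_j^b=\sigma_j^a$, $\sigma_j^b\circ\sigma_j^a=\sigma_j^b$ hold by Proposition~\ref{thm|Straightforward}. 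So I would state the proof assuming the two composition identities $\psi^a\circ\psi^b=\psi^a$ and $\psi^b\circ\psi^a=\psi^b$, which is how the proposition is used.

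For \eqref{eqn|InclusionHatAEGeni}: apply Proposition~\ref{prop|IdempotentPsiXi} with $\psi\mapsto\psi^b$, $\xi\mapsto\psi^a$, $Q\mapsto \hat E$. The middle equality there gives $\psi^b\bigl((\psi^b)^{-1}(\hat E)\bigr)=\hat E\cap\psi^b(R)$, and the last equality gives $(\psi^b)^{-1}(\hat E)\cap\psi^b(R)=\hat E\cap\psi^b(R)$. Now take an arbitrary $x\in(\psi^b)^{-1}(\hat E)$; I want $x\in\hat E$. Split on whether $x\in\psi^b(R)$ or $x\in\psi^a(R)$ (exhaustive by hypothesis). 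If $x\in\psi^b(R)$, then $x\in(\psi^b)^{-1}(\hat E)\cap\psi^b(R)=\hat E\cap\psi^b(R)\subseteq\hat E$. If $x\in\psi^a(R)$, then by Proposition~\ref{prop|IdempotentFixedPoints} $\psi^a(x)=x$; since $\psi^b(x)\in\hat E$ and $(\psi^b)^{-1}(\hat E)\subseteq(\psi^a)^{-1}(E)$ would need $x$ in that preimage — instead use that $x=\psi^a(x)$ and apply the hypothesis $(\psi^b)^{-1}(\hat E)\subseteq(\psi^a)^{-1}(E)$ to $x$ directly: $x\in(\psi^a)^{-1}(E)$, so $\psi^a(x)\in E\subseteq\hat E$, i.e.\ $x\in\hat E$. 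Either way $x\in\hat E$, proving \eqref{eqn|InclusionHatAEGeni}.

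For \eqref{eqn|InclusionAEGen}: let $x\in E$; I want $x\in(\psi^a)^{-1}(E)$, i.e.\ $\psi^a(x)\in E$. Again split on $x\in\psi^a(R)$ or $x\in\psi^b(R)$. If $x\in\psi^a(R)$ then $\psi^a(x)=x\in E$, done. If $x\in\psi^b(R)$ then $\psi^b(x)=x$; since $E\subseteq\hat E$ we get $x\in\hat E$, so $\psi^b(x)=x\in\hat E$ means $x\in(\psi^b)^{-1}(\hat E)\subseteq(\psi^a)^{-1}(E)$, hence $\psi^a(x)\in E$ as desired. This establishes \eqref{eqn|InclusionAEGen}.

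The main obstacle is bookkeeping rather than depth: one has to be careful that the two composition identities $\psi^a\circ\psi^b=\psi^a$ and $\psi^b\circ\psi^a=\psi^b$ are genuinely available (they do not follow from idempotency plus $\psi^a(R)\cup\psi^b(R)=R$ alone, so either they must be added to the hypotheses or, as here, one works in the concrete $\sigma_j^a,\sigma_j^b$ setting where Proposition~\ref{thm|Straightforward} supplies them), and that each case split is exhaustive. Given those identities, the argument is a short two-case chase in each part, and the key leverage is precisely the equality $(\psi)^{-1}(Q)\cap\psi(R)=Q\cap\psi(R)$ from Proposition~\ref{prop|IdempotentPsiXi} together with ``range = fixed points'' from Proposition~\ref{prop|IdempotentFixedPoints}.
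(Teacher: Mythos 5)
Your proof is correct and follows essentially the same route as the paper's: both arguments reduce each inclusion to the two cases $x\in\psi^a(R)$ and $x\in\psi^b(R)$ (exhaustive by the hypothesis $\psi^a(R)\cup\psi^b(R)=R$) and close each case using the identity $\psi^{-1}(Q)\cap\psi(R)=Q\cap\psi(R)$ from Proposition~\ref{prop|IdempotentPsiXi} together with the two set-inclusion hypotheses. One remark: your concern that the composition identities $\psi^a\circ\psi^b=\psi^a$ and $\psi^b\circ\psi^a=\psi^b$ must be added as extra hypotheses is unfounded. The only conclusions of Proposition~\ref{prop|IdempotentPsiXi} you actually use are $\psi\bigl(\psi^{-1}(Q)\bigr)=Q\cap\psi(R)=\psi^{-1}(Q)\cap\psi(R)$, which do not involve $\xi$ at all; to invoke that proposition formally one may simply take $\xi=\psi$, since $\psi\circ\psi=\psi$ holds by idempotency. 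This is precisely what the paper does (it notes that the first equality in \eqref{eqn|generalResultIdempotentPsiXi} is not used, so no choice of $\xi$ is required). Indeed, your two case analyses never appeal to any compatibility between $\psi^a$ and $\psi^b$ beyond $\psi^a(R)\cup\psi^b(R)=R$, so the proposition is provable exactly as stated, with no added hypotheses.
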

 
The equation \eqref{eqn|rangeFixedPoints} applied to $\sigma_j^a$ and $\sigma_j^b$ transforms into \begin{align}
\sigma_j^a\left(\Omega\right)=\{\omega_j=a\}\quad\text{ and }\quad 
\sigma_j^b\left(\Omega\right)=\{\omega_j=b\}.
\label{eqn|fixedPointsSigmaJAB}
\end{align}

We will apply the equation \eqref{eqn|generalResultIdempotentPsiXi}
 to idempotent functions $\sigma_j^a$ and $\sigma_j^b$ that satisfy $\sigma_j^a\circ\sigma_j^b=\sigma_j^a$ and $\sigma_j^b\circ\sigma_j^a=\sigma_j^b$. 

\begin{proposition} For every $j\in W$, the events $E_j$, $\hat E_j$, $A_j$, and $\hat A_j$ satisfy 
\begin{align}
&\sigma_j^a(A_j)=\sigma_j^a(A_j\cap\{\omega_j=b\}) 
=A_j\cap \left\{\omega_j=a\right\} 
=E_j\cap \left\{\omega_j=a\right\}; \label{eqn:AEinclusion}\\
&\sigma_j^b(\hat A_j) = \sigma_j^b(\hat A_j\cap\{\omega_j=a\})  
=\hat A_j\cap \left\{\omega_j=b\right\} 
=\hat E_j\cap \left\{\omega_j=b\right\}. \label{eqn:HatAEinclusion}
\end{align}

\end{proposition}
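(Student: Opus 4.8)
The plan is to obtain both displayed chains of equalities as direct specializations of Proposition \ref{prop|IdempotentPsiXi}, after identifying the idempotent functions correctly. The ambient set is $R=\Omega$, and the relevant facts are that $\sigma_j^a,\sigma_j^b$ are idempotent with $\sigma_j^a\circ\sigma_j^b=\sigma_j^a$ and $\sigma_j^b\circ\sigma_j^a=\sigma_j^b$ (from \eqref{eqn|propertyA}), the fixed-point identities $\sigma_j^a(\Omega)=\{\omega_j=a\}$ and $\sigma_j^b(\Omega)=\{\omega_j=b\}$ from \eqref{eqn|fixedPointsSigmaJAB}, and the descriptions $A_j=(\sigma_j^a)^{-1}(E_j)$ and $\hat A_j=(\sigma_j^b)^{-1}(\hat E_j)$ from Theorem \ref{thm|AiDefinition}.

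For \eqref{eqn:AEinclusion}, I would apply Proposition \ref{prop|IdempotentPsiXi} with $\psi=\sigma_j^a$, $\xi=\sigma_j^b$, and $Q=E_j$; the hypothesis $\psi\circ\xi=\psi$ is exactly $\sigma_j^a\circ\sigma_j^b=\sigma_j^a$. The conclusion \eqref{eqn|generalResultIdempotentPsiXi} then reads $\sigma_j^a\big((\sigma_j^a)^{-1}(E_j)\cap\sigma_j^b(\Omega)\big)=\sigma_j^a\big((\sigma_j^a)^{-1}(E_j)\big)=E_j\cap\sigma_j^a(\Omega)=(\sigma_j^a)^{-1}(E_j)\cap\sigma_j^a(\Omega)$. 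Substituting $A_j$ for $(\sigma_j^a)^{-1}(E_j)$, $\{\omega_j=b\}$ for $\sigma_j^b(\Omega)$, and $\{\omega_j=a\}$ for $\sigma_j^a(\Omega)$ turns this verbatim into $\sigma_j^a(A_j\cap\{\omega_j=b\})=\sigma_j^a(A_j)=E_j\cap\{\omega_j=a\}=A_j\cap\{\omega_j=a\}$, which is \eqref{eqn:AEinclusion} after reordering the listed sets.

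For \eqref{eqn:HatAEinclusion}, the argument is the same with the roles of $a$ and $b$ interchanged: apply Proposition \ref{prop|IdempotentPsiXi} with $\psi=\sigma_j^b$, $\xi=\sigma_j^a$, and $Q=\hat E_j$, so that the hypothesis $\psi\circ\xi=\psi$ is now $\sigma_j^b\circ\sigma_j^a=\sigma_j^b$. Using $\hat A_j=(\sigma_j^b)^{-1}(\hat E_j)$ together with the same fixed-point identities, \eqref{eqn|generalResultIdempotentPsiXi} becomes $\sigma_j^b(\hat A_j\cap\{\omega_j=a\})=\sigma_j^b(\hat A_j)=\hat E_j\cap\{\omega_j=b\}=\hat A_j\cap\{\omega_j=b\}$, i.e. \eqref{eqn:HatAEinclusion}.

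There is essentially no obstacle: the proof is a two-line specialization of results already established. The only point requiring attention is the bookkeeping of which of $\sigma_j^a,\sigma_j^b$ plays the role of $\psi$ and which plays $\xi$ in each case, and confirming that in each case it is the relation $\psi\circ\xi=\psi$ (not $\xi\circ\psi=\xi$) that is being invoked; both relations are available by \eqref{eqn|propertyA}, so this is merely a matter of matching indices. Consequently the written proof can be kept very short, or even left to the reader with a pointer to Proposition \ref{prop|IdempotentPsiXi}, Theorem \ref{thm|AiDefinition}, and \eqref{eqn|fixedPointsSigmaJAB}.
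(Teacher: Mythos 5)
Your proposal is correct and matches the paper's own argument, which likewise derives the proposition directly from \eqref{eqn|propertyA}, \eqref{eqn|AiDefinition}, \eqref{eqn|HatAiDefinition}, \eqref{eqn|generalResultIdempotentPsiXi}, and \eqref{eqn|fixedPointsSigmaJAB}; your specializations $(\psi,\xi,Q)=(\sigma_j^a,\sigma_j^b,E_j)$ and $(\sigma_j^b,\sigma_j^a,\hat E_j)$ are exactly the intended instances. No issues.
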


\begin{proof} The equalities \eqref{eqn:AEinclusion} and \eqref{eqn:HatAEinclusion} are direct consequences of \eqref{eqn|propertyA}, \eqref{eqn|AiDefinition}, \eqref{eqn|HatAiDefinition}, \eqref{eqn|generalResultIdempotentPsiXi},  and \eqref{eqn|fixedPointsSigmaJAB}.
\end{proof}

\begin{theorem} For every $j\in W$, the events $E_j$, $\hat E_j$, $A_j$, and $\hat A_j$ satisfy
\begin{align}
&E_j\subseteq \hat E_j;\quad \hat A_j\subseteq A_j;
\label{eqn:obviousInclusions}\\
&E_j\subseteq A_j ;\quad \text{and}
\label{eqn:EAinclusion}\\
& \hat A_j\subseteq \hat E_j.\label{eqn:HatAEinclusionC}
\end{align}
\end{theorem}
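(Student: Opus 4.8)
The plan is to derive all five inclusions from the three abstract propositions on idempotent functions (Propositions \ref{prop|IdempotentFixedPoints}, \ref{prop|IdempotentPsiXi}, and \ref{prop|IdempotentPsiAPsiB}) together with Theorem \ref{thm|AiDefinition}, rather than revisiting any combinatorics of geodesics. First I would handle \eqref{eqn:obviousInclusions}. The inclusion $E_j\subseteq\hat E_j$ is immediate from the definitions: if every geodesic passes through $j$, then certainly at least one does. For $\hat A_j\subseteq A_j$, recall that $\hat A_j=\{\partial_jf=b-a\}$ while $A_j=\{\partial_jf\neq0\}$; since $b-a>0$, the event $\partial_jf=b-a$ forces $\partial_jf\neq0$, so $\hat A_j\subseteq A_j$. (Alternatively, $\partial_jf\geq0$ always by monotonicity and $\partial_jf\leq b-a$ always, so the three events are nested as $\hat A_j\subseteq A_j\subseteq\Omega$.)

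Next I would verify that the hypotheses of Proposition \ref{prop|IdempotentPsiAPsiB} are met with $R=\Omega$, $\psi^a=\sigma_j^a$, $\psi^b=\sigma_j^b$, $E=E_j$, and $\hat E=\hat E_j$. The maps $\sigma_j^a$ and $\sigma_j^b$ are idempotent by \eqref{eqn|propertyA} (taking $\alpha=\beta$), and their ranges cover $\Omega$ because, by \eqref{eqn|fixedPointsSigmaJAB}, $\sigma_j^a(\Omega)\cup\sigma_j^b(\Omega)=\{\omega_j=a\}\cup\{\omega_j=b\}=\Omega$. The condition $E\subseteq\hat E$ is exactly $E_j\subseteq\hat E_j$, just established. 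The remaining hypothesis $(\sigma_j^b)^{-1}(\hat E_j)\subseteq(\sigma_j^a)^{-1}(E_j)$ is precisely $\hat A_j\subseteq A_j$ after rewriting both sides via \eqref{eqn|AiDefinition} and \eqref{eqn|HatAiDefinition} of Theorem \ref{thm|AiDefinition}; and that inclusion is part of \eqref{eqn:obviousInclusions}, which I will have already proved. With all hypotheses in place, Proposition \ref{prop|IdempotentPsiAPsiB} yields $E_j\subseteq(\sigma_j^a)^{-1}(E_j)=A_j$, which is \eqref{eqn:EAinclusion}, and $(\sigma_j^b)^{-1}(\hat E_j)=\hat A_j\subseteq\hat E_j$, which is \eqref{eqn:HatAEinclusionC}.

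The only genuine subtlety — and the step I would treat most carefully — is the bookkeeping of which abstract hypothesis corresponds to which concrete fact, since $\hat A_j\subseteq A_j$ plays a double role: it is one of the "obvious" inclusions we assert outright, and it is simultaneously the nontrivial-looking hypothesis $(\psi^b)^{-1}(\hat E)\subseteq(\psi^a)^{-1}(E)$ feeding Proposition \ref{prop|IdempotentPsiAPsiB}. I would therefore prove $\hat A_j\subseteq A_j$ directly and first (from $b-a>0$ and the chain $\hat A_j=\{\partial_jf=b-a\}\subseteq\{\partial_jf\neq0\}=A_j$), so that no circularity arises. Everything else is a mechanical substitution into identities already recorded in the excerpt, so I expect no further obstacles; the proof will be short.
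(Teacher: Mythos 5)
Your proposal is correct and follows essentially the same route as the paper: the two inclusions in \eqref{eqn:obviousInclusions} are handled directly, and \eqref{eqn:EAinclusion} and \eqref{eqn:HatAEinclusionC} are obtained by applying Proposition \ref{prop|IdempotentPsiAPsiB} to $\psi^a=\sigma_j^a$, $\psi^b=\sigma_j^b$, $E=E_j$, $\hat E=\hat E_j$, using the identities \eqref{eqn|AiDefinition} and \eqref{eqn|HatAiDefinition} to translate the hypothesis $(\psi^b)^{-1}(\hat E)\subseteq(\psi^a)^{-1}(E)$ into $\hat A_j\subseteq A_j$. Your explicit verification of the hypotheses and the remark on avoiding circularity simply spell out details the paper leaves implicit.
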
 
\begin{proof}
The inclusions \eqref{eqn:obviousInclusions} are obvious, while the inclusions \eqref{eqn:EAinclusion} and \eqref{eqn:HatAEinclusionC} follow from \eqref{eqn|InclusionAEGen} 
and \eqref{eqn|InclusionHatAEGeni}.
\end{proof}

\begin{proposition}\label{thm|IntDependent}
Assume that $0<a<b$ are real numbers. Then
\begin{align}
\left(\exists\,(k_a,k_b)\in\mathbb{Z}^2\right) \quad 
ak_a + bk_b \in (0,\,b-a) 
\quad &\Longleftrightarrow \quad 
\frac{a}{b-a} \notin \mathbb{N}.
\label{eqn|IntDependent}
\end{align} 
\end{proposition}

\begin{proof}
Dividing $ak_a + bk_b \in (0, b-a)$ by $a$, the condition becomes 
$k_a + \frac{b}{a}k_b \in (0, \frac{b}{a}-1)$. Setting $\alpha = \frac{b}{a} - 1 > 0$ and substituting 
$m = k_a + k_b$, $n = k_b$ (a bijection $\mathbb{Z}^2 \to \mathbb{Z}^2$) 
transforms \eqref{eqn|IntDependent} into the equivalent form
\begin{align*} 
\left(\exists\,(m,n)\in\mathbb{Z}^2\right) \quad 
m + n\alpha \in (0,\alpha) 
\quad &\Longleftrightarrow \quad 
\frac{1}{\alpha} \notin \mathbb{N}. 
\end{align*}

\emph{Case 1: $\alpha$ is irrational.} By Dirichlet's approximation 
theorem, for every $\varepsilon > 0$ there exists a positive integer 
$n$ with $n\alpha - \lfloor n\alpha \rfloor \in (0, \varepsilon)$. 
Taking $\varepsilon = \min\{\alpha, 1\}$ and 
$m = -\lfloor n\alpha \rfloor$ gives 
$m + n\alpha = n\alpha - \lfloor n\alpha \rfloor \in (0, \alpha)$. 
Since $\alpha$ is irrational, $\frac{1}{\alpha}$ is also irrational, 
hence not in $\mathbb{N}$.

\emph{Case 2: $\alpha = \frac{p}{q}$ with $\gcd(p,q) = 1$ and $p, q$ 
positive integers.} Multiplying $m + n\alpha \in (0,\alpha)$ by $q$, the condition becomes
\begin{align} 
mq + np & \in (0, p). 
\label{eqn|IntDependentProof02}
\end{align}
Since $\gcd(p,q) = 1$, B\'ezout's identity gives integers $m, n$ 
with $mq + np = 1$, and every integer is obtainable as $mq + np$. 
Hence \eqref{eqn|IntDependentProof02} has a solution if and only 
if the interval $(0, p)$ contains a positive integer, which holds 
if and only if $p > 1$. The condition $p = 1$ is equivalent to 
$\frac{1}{\alpha} = q \in \mathbb{N}$.
\end{proof}

If the real number $a$ is an integer multiple of $b-a$, then the events $\hat A_j$ and $A_j$ coincide, and the relationships between $E_j$, $A_j$, and $\hat E_j$ are simpler; see Figure~\ref{fig|relationshipBetweenAE}(b). Formally, we have the following result.

\begin{proposition}\label{thm|ABSpecial}
Assume that $\frac{a}{b-a}\in\mathbb N$. Then
\[
E_j \subseteq A_j = \hat A_j \subseteq \hat E_j.
\]
\end{proposition}

\begin{proof}
This is an immediate consequence of the inclusions $E_j \subseteq A_j$, $\hat A_j \subseteq \hat E_j$, and the equality $A_j = \hat A_j$.
\end{proof}

\begin{proposition}\label{thm|GeneralVennDiagram} For sufficiently large $n$, the following sets are non-empty:
$\hat E_j\setminus A_j$, $\hat A_j\setminus E_j$,  and $A_j\setminus E_j$.
If $a/(b-a)\notin\mathbb N$, then, for sufficiently large $n$, the sets 
 $A_j\setminus \hat E_j$ and $E_j\setminus \hat A_j$
are non-empty.
\end{proposition}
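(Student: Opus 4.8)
The plan is to establish all five non-emptiness statements by producing, for each, one explicit environment, and to build all of these environments from a single gadget. Write $N=d_{\ell_1}(0,nv)$ and fix, in the all-$a$ environment, a shortest path $L$ from $0$ to $nv$ (so $L$ has $N$ edges). On $L$ I single out a block $S^*$ of $m$ consecutive edges, with endpoints $P$ and $R$, where $m$ is an integer to be chosen large in terms of $a$ and $b$ only. I then attach to the pair $\{P,R\}$ two internally vertex-disjoint ``bypass'' paths $B_1,B_2$ from $P$ to $R$, each obtained by stepping one unit off $L$ in a coordinate direction, running parallel to $S^*$, and stepping back, using two distinct perpendicular directions (opposite ones if $d=2$) so that $L$, $B_1$, $B_2$ are pairwise edge-disjoint and meet only at $P$ and $R$; each $B_i$ then has $m+2$ edges. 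A distinguished edge $j$ will lie on $B_1$.

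Given a target value $\epsilon\in[0,b-a]$ and a target value $\delta\in\{a,b\}$ for the passage time of $j$, I define the environment $\omega=\omega(m,\epsilon,\delta)$ by: every edge not named below gets passage time $b$; every edge of $L$ outside $S^*$ gets passage time $a$; the distinguished edge $j\in B_1$ gets passage time $\delta$ and every other edge of $B_1$ gets passage time $a$; finally the edges of $B_2$ are assigned values in $\{a,b\}$ (lengthening and ``decorating'' $B_2$ if necessary) so that the $0$--$nv$ route through $B_2$ has total passage time exactly $(N+2)a+\epsilon$. For $\epsilon=0$ this simply means $B_2$ is all-$a$; for $\epsilon=b-a$ it means $B_2$ carries exactly one $b$-edge; for a general $\epsilon\in(0,b-a)$ one must choose the length and number of $b$-edges of $B_2$ appropriately, and this is the one place where the arithmetic hypothesis $ak_a+bk_b\in(0,b-a)$ is invoked, so the last two claims are the only ones that use it.

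Granting that $\omega$ has the \emph{intended geodesic structure} --- namely that on each of $\sigma_j^a\omega$ and $\sigma_j^b\omega$, every geodesic from $0$ to $nv$ runs along $L$ up to $P$, then through $B_1$ or $B_2$, then along $L$ from $R$ onward --- all five statements follow from a short case analysis together with $f(\sigma_j^a\omega)=(N+2)a$, $f(\sigma_j^b\omega)=(N+2)a+\min(\epsilon,b-a)=(N+2)a+\epsilon$, hence $\partial_j f(\omega)=\epsilon$, and Theorem~\ref{thm|AiDefinition} and the inclusions \eqref{eqn:obviousInclusions}, \eqref{eqn:EAinclusion}. Indeed: with $\delta=a$ and $\epsilon=0$ there are two geodesics on $\omega$, one through $j$ and one not, and $\partial_j f=0$, so $\omega\in\hat E_j\setminus A_j$; with $\delta=b$ and $\epsilon=b-a$ there are again two geodesics, one through $j$ and one not, and $\partial_j f=b-a$, so $\omega\in\hat A_j\setminus E_j$, which also lies in $A_j\setminus E_j$; when an $\epsilon\in(0,b-a)$ is available, taking $\delta=b$ makes the $B_1$-route strictly more expensive than the $B_2$-route, so the unique geodesic avoids $j$ while $\partial_j f=\epsilon\neq0$, giving $\omega\in A_j\setminus\hat E_j$, and taking $\delta=a$ makes the $B_1$-route the unique geodesic while $\partial_j f=\epsilon\neq b-a$, giving $\omega\in E_j\setminus\hat A_j$. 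The phrase ``for sufficiently large $n$'' enters only to guarantee that $L$ has room for a block of $m$ edges and that $B_1,B_2$ fit inside $[-2n,2n]^d$, which holds once $N$ exceeds a bound depending on $m$, hence on $a,b$.

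The step I expect to be the main obstacle is the verification of the intended geodesic structure, i.e. ruling out cheaper ``sneaky'' paths; two points are needed. First, any path entering the interior of $S^*$ is forced to pay for essentially the whole bottleneck, since every vertex strictly between $P$ and $R$ on $L$ is incident only to $b$-edges; quantitatively the ``straight'' route costs $Na+m(b-a)$, which exceeds the bypass cost $(N+2)a+O(\epsilon)$ once $m>2a/(b-a)$, so one may take $m=\lfloor (a+b)/(b-a)\rfloor+1$. Second, any $0$--$nv$ path that is not one of the two bypass routes either has length $\ge N+2$ and uses a $b$-edge beyond those needed by a bypass route, costing at least $a$ more and hence losing because $b>a$, or has length $N$ and must then coincide with a shortest path that necessarily meets the bottleneck --- this requires a routine analysis of the $a$-valued subgraph, which by construction is (up to the removal of $j$) the disjoint union of the $0$-to-$P$ arc of $L$, the two bypasses, and the $R$-to-$nv$ arc, so that its only $0$--$nv$ connections are the two bypass routes. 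The remaining technical wrinkle is the realization of a prescribed $\epsilon\in(0,b-a)$: since passage times of lattice paths with common endpoints differ by elements of $a\mathbb Z+b\mathbb Z$ whose coefficient sum is even, one extracts from the hypothesis a usable representation (replacing $\epsilon$ by $2\epsilon$ or by $(b-a)-\epsilon$ when these lie in $(0,b-a)$, which serve equally well in the construction) and then encodes it through the length and number of $b$-edges of $B_2$; I would isolate this as a small arithmetic lemma.
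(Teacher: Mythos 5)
Your construction is the same kind of argument the paper itself uses---explicit gadget environments---only packaged into a single bottleneck-plus-two-bypasses gadget, whereas the paper takes the all-$a$ and all-$b$ environments for $\hat E_j\setminus A_j$ and $\hat A_j\setminus E_j$ and defers $A_j\setminus E_j$, $A_j\setminus\hat E_j$, $E_j\setminus\hat A_j$ to Propositions \ref{thm|NoOppositeInclusions} and \ref{thm|ABGeneral} (two competing paths, with a passage-time gap in $(0,b-a)$ in the conditional case). Your three unconditional cases ($\epsilon=0$ and $\epsilon=b-a$) do go through, at a level of detail comparable to the paper's, but one step of your ``sneaky path'' verification is wrong as stated: a length-$N$ competitor need not ``coincide with a shortest path that necessarily meets the bottleneck.'' For a non-axis-parallel $v$ there are many monotone paths of length $N$ that avoid the bottleneck altogether, and, worse, if a bypass direction $+e_i$ is one in which $nv$ advances, a monotone path can enter $B_1$ through its first connector, run along it, \emph{not} step back, and rejoin $L$ at the next $+e_i$ step of $L$; if that step occurs shortly after $R$, this competitor has length $N$, uses only $a$-edges (plus possibly $j$), and beats your intended geodesics, destroying the case analysis. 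This is repairable (choose the bypass directions negative with respect to the octant of $nv$, or place the gadget in the middle of a straight run of $L$ much longer than $m$), but as written the ``intended geodesic structure'' is not established.

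The genuine gap is the ``small arithmetic lemma'' you deferred in the conditional half. You correctly note that passage-time differences of equal-endpoint lattice paths lie in $\left\{ak_a+bk_b:\ k_a+k_b\ \mathrm{even}\right\}=2a\mathbb Z+(b-a)\mathbb Z$, but the proposed repair does not work: $(b-a)-\epsilon=a(-1-k_a)+b(1-k_b)$ has coefficient sum $-(k_a+k_b)$, so it never changes parity, and $2\epsilon$ may leave $(0,b-a)$. Concretely, for $a=1$, $b=3$ the hypothesis of the proposition holds with $\epsilon=1$, yet $2\epsilon=2\notin(0,2)$ and $(0,2)\cap 2\mathbb Z=\emptyset$, so no pair of equal-endpoint paths can have passage-time difference in $(0,b-a)$; a short argument based on $A_j=(\sigma_j^a)^{-1}(E_j)$ (Theorem \ref{thm|AiDefinition}) then shows that $A_j\setminus\hat E_j$, and likewise $E_j\setminus\hat A_j$, is empty for every $n$ in that case. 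Hence no choice of gadget can close this gap under the hypothesis as stated; what is actually needed is a representation $ak_a+bk_b\in(0,b-a)$ with $k_a+k_b$ even. Be aware that the paper's own proof of Proposition \ref{thm|ABGeneral} silently assumes such a representation (``we can make such choices \dots\ such that the difference belongs to $(0,b-a)$''), so your parity observation exposes an issue with the statement and its proof in the paper rather than a defect peculiar to your construction; but as a proof of the proposition as stated, your argument (like the paper's) is incomplete at exactly this point.
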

\begin{proof}
There are trivial examples that establish $\hat E_j\setminus A_j \neq \emptyset$ and $\hat A_j\setminus E_j\neq\emptyset$. We will construct them on the torus model. The examples can be easily extended to the general first-passage percolation. Let $\omega_a$ be the environment that assigns the value $a$ to every edge. It is easy to prove that $\omega_a\in \hat E_j\setminus A_j$, for every edge $j$ that connects the vertex $\overrightarrow x$ with the vertex $\overrightarrow y$ such that $x_1-y_1=\pm 1$. 
 Take now the environment $\omega_b$ whose all edges are assigned the value $b$. Then, every edge between $\overrightarrow x$ and $\overrightarrow y$ for which $x_1-y_1=\pm 1$ 
 is very influential. None of the edges is essential, hence $\omega_b\in \hat A_j\setminus E_j$.
 These two trivial examples $\omega_a$ and $\omega_b$ show that $\hat E_j\setminus A_j$ and $\hat A_j\setminus E_j$ are non-empty. 
 
In Section \ref{sec|relationshipIE}, we will prove that the remaining sets are non-empty. The relation $A_j\setminus E_j\neq\emptyset$ is proved in Proposition \ref{thm|NoOppositeInclusions}.
The Proposition \ref{thm|ABGeneral}
proves that $A_j\setminus \hat E_j$ and $E_j\setminus \hat A_j$ are non-empty if $a/(b-a)\notin \mathbb N$. 
\end{proof}

\begin{proposition} \label{thm|MonotonicityOfGeodesic} Assume that $\omega\in E_j$. A path $\gamma$ is a geodesic on $\omega$ if and only if it is a geodesic on $\sigma_j^a(\omega)$. 
\end{proposition}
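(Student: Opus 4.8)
The plan is a direct comparison of the passage times of individual paths in the two environments $\omega$ and $\sigma_j^a(\omega)$, with the exact identification of the optimal value $f(\sigma_j^a(\omega))$ as the pivot of the argument. First I would dispose of the trivial case: if $\omega_j=a$, then $\sigma_j^a(\omega)=\omega$ by \eqref{eqn:definitionSigma} and there is nothing to prove. So I would assume $\omega_j=b$ and abbreviate $\omega'=\sigma_j^a(\omega)$. The only structural fact needed is the elementary relation that, for any path $\gamma$, $T(\gamma,\omega)=T(\gamma,\omega')$ when $j\notin\gamma$ and $T(\gamma,\omega)=T(\gamma,\omega')+(b-a)$ when $j\in\gamma$.

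The key step is to show that $f(\omega')=f(\omega)-(b-a)$. For the upper bound, take any geodesic $\gamma_0$ on $\omega$; since $\omega\in E_j$, it passes through $j$, so $T(\gamma_0,\omega')=T(\gamma_0,\omega)-(b-a)=f(\omega)-(b-a)$, which gives $f(\omega')\le f(\omega)-(b-a)$. For the matching lower bound, let $\gamma$ be an arbitrary path. If $j\in\gamma$, then $T(\gamma,\omega')=T(\gamma,\omega)-(b-a)\ge f(\omega)-(b-a)$. If $j\notin\gamma$, then $T(\gamma,\omega')=T(\gamma,\omega)$, and here the hypothesis $\omega\in E_j$ is essential: such a $\gamma$ is not a geodesic on $\omega$, so $T(\gamma,\omega)>f(\omega)>f(\omega)-(b-a)$. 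In both cases $T(\gamma,\omega')\ge f(\omega)-(b-a)$, hence $f(\omega')=f(\omega)-(b-a)$.

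With the value pinned down, both implications of the proposition follow. If $\gamma$ is a geodesic on $\omega$, then $j\in\gamma$ because $\omega\in E_j$, so $T(\gamma,\omega')=T(\gamma,\omega)-(b-a)=f(\omega)-(b-a)=f(\omega')$, i.e.\ $\gamma$ is a geodesic on $\omega'$. Conversely, if $\gamma$ is a geodesic on $\omega'$, then $T(\gamma,\omega')=f(\omega)-(b-a)$; the case analysis above shows that a path avoiding $j$ has passage time strictly larger than $f(\omega)-(b-a)$ on $\omega'$, so necessarily $j\in\gamma$, and then $T(\gamma,\omega)=T(\gamma,\omega')+(b-a)=f(\omega)$, so $\gamma$ is a geodesic on $\omega$.

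The argument has no serious obstacle; the one place that requires attention — and the sole point where the hypothesis $\omega\in E_j$ is used — is the claim that paths bypassing $j$ remain \emph{strictly} suboptimal after the weight of $j$ is lowered to $a$, which is exactly the content of being essential. As a by-product the proof also shows $\sigma_j^a(\omega)\in E_j$, which is consistent with $E_j\subseteq A_j$ and Theorem~\ref{thm|AiDefinition}.
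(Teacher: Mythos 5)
Your proof is correct. The forward direction is essentially the paper's: you compare, path by path, the passage times on $\omega$ and on $\sigma_j^a(\omega)$, using that every geodesic on $\omega$ contains $j$ while every $j$-avoiding path is strictly suboptimal on $\omega$. Where you genuinely diverge is in the converse direction and in the overall organization: you first pin down the exact value $f(\sigma_j^a(\omega))=f(\omega)-(b-a)$ and then read off both implications (and, as you note, the fact $\sigma_j^a(\omega)\in E_j$) from the same strict-inequality case analysis. The paper instead proves the converse by contradiction and imports the previously established identity $\sigma_j^a(A_j)=E_j\cap\{\omega_j=a\}$ (equation \eqref{eqn:AEinclusion}, which rests on Theorem \ref{thm|AiDefinition} and the idempotent-function propositions) to conclude that $j$ is still essential on $\sigma_j^a(\omega)$, and hence that any geodesic there must pass through $j$. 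Your route is more self-contained and arguably cleaner, since the single computation of $f(\sigma_j^a(\omega))$ does all the work and the essentiality of $j$ on the modified environment comes out as a by-product rather than being cited; the paper's route, on the other hand, showcases how the algebraic identities among $E_j$, $A_j$ and the maps $\sigma_j^a,\sigma_j^b$ can be reused, which is in keeping with its emphasis on relations that are easy to feed to computer-assisted arguments. One small point of care in your argument, which you handled correctly: the strictness $T(\gamma,\omega)>f(\omega)$ for $j\notin\gamma$ uses exactly that $\omega\in E_j$, and the additional slack $f(\omega)>f(\omega)-(b-a)$ uses $a<b$; both are needed to force $j\in\gamma$ for geodesics on $\sigma_j^a(\omega)$.
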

\begin{proof} First we will prove that every geodesic on $\omega$ is also a geodesic on $\sigma_j^a(\omega)$. The case $\omega_j=a$ is trivial. Assume that $\omega_j=b$. Let $\mu$ be a geodesic on $\omega$. Since $\omega\in E_j$, we must have \begin{align*}f(\omega)&= T(\mu,\omega)=T(\mu,\sigma_j^a(\omega))+(b-a).\end{align*} We obtained that the equation $T(\mu,\sigma_j^a(\omega))=f(\omega)-(b-a)$ holds for every geodesic $\mu$ on $\omega$. If $\nu$ is any other path that is not a geodesic on $\omega$, we must have 
$T(\nu,\omega)>f(\omega)$. We would also have \begin{align*}T(\nu,\sigma_j^a(\omega))\geq T(\nu,\omega) -(b-a)>f(\omega)-(b-a)=T(\mu,\sigma_j^a(\omega)).\end{align*}
We have made the following conclusion: If $\nu$ is not a geodesic on $\omega$, then $\nu$ is also not a geodesic on $\sigma_j^a(\omega)$. In addition, all geodesics on $\omega$ have the same passage time 
on $\sigma_j^a(\omega)$. This implies that all geodesics on $\omega$ are geodesics on $\sigma_j^a(\omega)$. 

Let us now prove that every geodesic on $\sigma_j^a(\omega)$ is also a geodesic on $\omega$. It suffices to prove this for $\omega\in E_j\cap\{\omega_j=b\}$. Assume the contrary, that there is a geodesic $\gamma$ on $\sigma_j^a(\omega)$ that is not a geodesic on $\omega$. Since \[\sigma_j^a(E_j)\subseteq \sigma_j^a(A_j)=
E_j\cap \{\omega_j=a\}\subseteq E_j,\] we must have $\sigma_j^a(\omega)\in E_j$. Therefore, the path $\gamma$ must pass through $j$ on $\sigma_j^a(\omega)$. Therefore, 
\begin{align}
f(\sigma_j^a(\omega))&=T(\gamma,\sigma_j^a(\omega))=T(\gamma,\omega)-(b-a). \label{eqn|MonotonicityOfGeodesic01}
\end{align}
Since $\gamma$ is not a geodesic on $\omega$, there must be a path $\delta$ which is a geodesic and for which $T(\gamma,\omega)$ is strictly larger than $T(\delta,\omega)$. However, $\omega\in E_j$ by the assumption. Therefore, $\delta$ passes through $j$ and $T(\delta,\omega)=T(\delta,\sigma_j^a(\omega))+(b-a)$. From \eqref{eqn|MonotonicityOfGeodesic01} we obtain 
\begin{align*}
f(\sigma_j^a(\omega))&= T(\gamma,\omega)-(b-a)\\ &> T(\delta,\omega)-(b-a)\\
&= T(\delta,\sigma_j^a(\omega)).
\end{align*}
This is a contradiction, because the value $f$ must be smaller than or equal to the cost over the path $\delta$ on the envifornment $\sigma_j^a(\omega)$. 
\end{proof}

For $V\subseteq W$, define $E_V=\bigcap_{j\in V}E_j$.

\begin{proposition} For every $j\in V$, the following holds 
\begin{align*} 
\sigma_j^a\left(E_V\right)&=E_V\cap \{\omega_j=a\}.  
\end{align*}
\end{proposition}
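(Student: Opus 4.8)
The plan is to mimic the proof of Proposition~\ref{thm|MonotonicityOfGeodesic} almost verbatim, upgrading the single-edge essential event $E_j$ to the intersection $E_V$, and to exploit the already-established identity $\sigma_j^a(E_j)=E_j\cap\{\omega_j=a\}$ together with the fact that the operators $\sigma_i^a$ and $\sigma_j^a$ commute for $i\neq j$ (equation~\eqref{eqn|propertyB}). The inclusion $\sigma_j^a(E_V)\subseteq E_V\cap\{\omega_j=a\}$ is the substantive direction; the reverse inclusion is immediate because every $\omega\in E_V\cap\{\omega_j=a\}$ is a fixed point of $\sigma_j^a$ (by \eqref{eqn|propertyA} / \eqref{eqn|fixedPointsSigmaJAB}), so $\omega=\sigma_j^a(\omega)\in\sigma_j^a(E_V)$.

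For the forward inclusion, fix $\omega\in E_V$ and write $\omega'=\sigma_j^a(\omega)$; we must show $\omega'\in E_i$ for every $i\in V$. For $i=j$ this is exactly Proposition~\ref{thm|MonotonicityOfGeodesic} (or the displayed consequence $\sigma_j^a(E_j)\subseteq E_j\cap\{\omega_j=a\}$ used inside its proof), since $\omega\in E_V\subseteq E_j$. For $i\in V$ with $i\neq j$, I would argue as follows. By Proposition~\ref{thm|MonotonicityOfGeodesic} applied to the edge $j$ (using $\omega\in E_j$), the set of geodesics on $\omega$ coincides with the set of geodesics on $\omega'=\sigma_j^a(\omega)$. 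Now $\omega\in E_i$ means every geodesic on $\omega$ passes through $i$; since the geodesics on $\omega'$ are exactly the geodesics on $\omega$, every geodesic on $\omega'$ passes through $i$ as well, i.e. $\omega'\in E_i$. Intersecting over all $i\in V$ gives $\sigma_j^a(E_V)\subseteq E_V$, and combining with $\sigma_j^a(E_V)\subseteq\{\omega_j=a\}$ (immediate from the definition of $\sigma_j^a$) yields $\sigma_j^a(E_V)\subseteq E_V\cap\{\omega_j=a\}$.

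I do not expect a serious obstacle here, since all the work has effectively been front-loaded into Proposition~\ref{thm|MonotonicityOfGeodesic}, whose key output---that passing to $\sigma_j^a$ preserves the entire collection of geodesics whenever $\omega\in E_j$---is precisely the tool that lets the essential status of the \emph{other} edges $i\in V$ transfer unchanged. The one point deserving a word of care is that Proposition~\ref{thm|MonotonicityOfGeodesic} is an ``if and only if'' statement about a \emph{fixed} path $\gamma$ being a geodesic on $\omega$ versus on $\sigma_j^a(\omega)$; one should note that it therefore identifies the two geodesic sets as subsets of the common path space, so ``every geodesic passes through $i$'' is genuinely an equivalent condition on the two environments. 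If one prefers to avoid invoking Proposition~\ref{thm|MonotonicityOfGeodesic} as a black box, the same conclusion can be reached directly by the case analysis $\omega_j=a$ (trivial, $\omega'=\omega$) versus $\omega_j=b$, repeating the short passage-time bookkeeping from its proof: for $\omega_j=b$, every geodesic $\mu$ on $\omega$ satisfies $T(\mu,\omega')=f(\omega)-(b-a)$ and every non-geodesic $\nu$ satisfies $T(\nu,\omega')>f(\omega)-(b-a)$, so the geodesic sets match and the essential status of each $i\in V$ is preserved.
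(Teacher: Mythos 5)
Your proposal is correct and follows essentially the same route as the paper: the reverse inclusion is the trivial fixed-point observation, and the forward inclusion invokes Proposition \ref{thm|MonotonicityOfGeodesic} (for the edge $j$, using $\omega\in E_V\subseteq E_j$) to identify the geodesic sets on $\omega$ and $\sigma_j^a(\omega)$, so that the essential status of every $i\in V$ transfers. The only cosmetic difference is that you treat $i=j$ separately and mention commutativity of the $\sigma$'s, neither of which is needed.
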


\begin{proof} The inclusion $\supseteq$ is obvious: If $\omega\in E_V$ and $\omega_j=a$, then $\sigma_j^a(\omega)=\omega$. Therefore, the environment $\omega$ is the image of $\omega$ under $\sigma_j^a$. That makes $\omega$ an element of $\sigma_j^a(E_V)$.

We need to prove that $\sigma_j^a(E_V)\subseteq E_V$. Take $\zeta\in \sigma_j^a(E_V)$. There exists $\omega\in E_V$ such that $\zeta=\sigma_j^a(\omega)$. Since $\omega\in E_V\subseteq E_j$, we can apply Proposition 
\ref{thm|MonotonicityOfGeodesic}. Every geodesic on $\zeta$ must be a geodesic on $\omega$. However, $\omega\in E_V$. Every geodesic on $\omega$ must pass through all of the edges of $V$. All of the geodesics on $\zeta$ must satisfy the same condition. Thus, $\zeta\in E_V$.
\end{proof}

If $\overrightarrow \alpha\in \{a,b\}^m$ and $\overrightarrow v\in W^m$, define $\sigma_{\overrightarrow v}^{\overrightarrow\alpha}:\Omega\to \Omega$ as 
\begin{align*}
\sigma_{\overrightarrow v}^{\overrightarrow\alpha}&=\sigma_{v_1}^{\alpha_1}\circ\cdots\circ\sigma_{v_m}^{\alpha_m},
\end{align*}
where $\alpha_1$, $\dots$, $\alpha_m$ are the components of $\overrightarrow\alpha$ and $v_1$, $\dots$, $v_m$ are the components of $\overrightarrow v$.

The event $I_{\overrightarrow v, \overrightarrow \alpha}$ is defined as 
\begin{align*}
I_{\overrightarrow v,\overrightarrow\alpha}&=\sigma_{\overrightarrow v}^{\overrightarrow \alpha}(\Omega)=\left\{\omega\in\Omega: \omega_{v_1}=\alpha_1, \dots, \omega_{v_m}=\alpha_m\right\}. 
\end{align*}

\begin{proposition}\label{thm|TiltingLemmaMD}
For every event $A$, every vector $\overrightarrow v\in W^k$ that has all distinct components, and every two vectors $\overrightarrow\alpha$ and $\overrightarrow \beta$ from $\{a,b\}^m$, the following holds 
\begin{align}
\mathbb P\left(A\cap I_{\overrightarrow v,\overrightarrow\beta} \right)&= \frac{\mathbb P(\overrightarrow\beta)}{\mathbb P\left(\overrightarrow\alpha\right)} \mathbb P\left( 
\sigma_{\overrightarrow v}^{\overrightarrow \alpha}\left(
A\cap I_{\overrightarrow v,\overrightarrow \beta}
\right)\right).
 \label{eqn|TiltingFormulaMD} 
\end{align}
\end{proposition}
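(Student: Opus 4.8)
The plan is to reduce the claimed identity to the single-coordinate tilting formula and then iterate. First I would observe that it suffices to prove the one-edge version: for any event $B$, any single edge $v$, and any $\alpha,\beta\in\{a,b\}$,
\begin{eqnarray}
\mathbb P\left(B\cap \{\omega_v=\beta\}\right)&=&\frac{\mathbb P(\beta)}{\mathbb P(\alpha)}\,\mathbb P\left(\sigma_v^{\alpha}\left(B\cap\{\omega_v=\beta\}\right)\right).\nonumber
\end{eqnarray}
This one-edge statement is essentially a change-of-variables (``tilting'') identity: on the slice $\{\omega_v=\beta\}$ the map $\sigma_v^{\alpha}$ is a bijection onto the slice $\{\omega_v=\alpha\}$, and it does not touch any coordinate other than $v$; since the coordinates are independent, the push-forward of $\mathbb P$ restricted to $\{\omega_v=\beta\}$ under $\sigma_v^{\alpha}$ is $\tfrac{\mathbb P(\alpha)}{\mathbb P(\beta)}$ times $\mathbb P$ restricted to $\{\omega_v=\alpha\}$. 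Concretely, writing each $\eta\in\sigma_v^{\alpha}(B\cap\{\omega_v=\beta\})$ uniquely as $\sigma_v^{\alpha}(\omega)$ with $\omega\in B\cap\{\omega_v=\beta\}$, we have $\mathbb P(\{\eta\})=\mathbb P(\{\omega\})\cdot\tfrac{\mathbb P(\alpha)}{\mathbb P(\beta)}$ because the two atoms differ only in the $v$-coordinate. Summing over the (finite) fiber gives the displayed one-edge identity.

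Next I would upgrade this to the multi-edge statement \eqref{eqn|TiltingFormulaMD} by induction on $k=m$, peeling off one coordinate at a time along the factorization \eqref{eqn|MultiDimensionSigma}. Write $\overrightarrow v=(v_1,v')$, $\overrightarrow\alpha=(\alpha_1,\alpha')$, $\overrightarrow\beta=(\beta_1,\beta')$, and $I_{\overrightarrow v,\overrightarrow\beta}=\{\omega_{v_1}=\beta_1\}\cap I_{v',\beta'}$. Apply the one-edge identity with the edge $v_1$ and the event $B=A\cap I_{v',\beta'}$ to replace the constraint $\omega_{v_1}=\beta_1$ by $\omega_{v_1}=\alpha_1$ at the cost of the factor $\mathbb P(\beta_1)/\mathbb P(\alpha_1)$; crucially, since $v_1\notin\{v_2,\dots,v_k\}$ (the components of $\overrightarrow v$ are distinct) and $\sigma_{v_1}^{\alpha_1}$ only alters the $v_1$-coordinate, $\sigma_{v_1}^{\alpha_1}$ maps $I_{v',\beta'}$ into itself and commutes with $\sigma_{v'}^{\alpha'}$ by \eqref{eqn|propertyB}. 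Then apply the induction hypothesis to the remaining $(k-1)$-fold tilt acting on the event $\sigma_{v_1}^{\alpha_1}(A\cap I_{\overrightarrow v,\overrightarrow\beta})$, whose intersection with $I_{v',\beta'}$ one handles using $\sigma_{v_1}^{\alpha_1}(I_{\overrightarrow v,\overrightarrow\beta})\subseteq I_{v',\beta'}$. Collecting the two scalar factors gives $\tfrac{\mathbb P(\beta_1)}{\mathbb P(\alpha_1)}\cdot\tfrac{\mathbb P(\beta')}{\mathbb P(\alpha')}=\tfrac{\mathbb P(\overrightarrow\beta)}{\mathbb P(\overrightarrow\alpha)}$, which is the claimed constant.

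The only real care needed—and the step I expect to be the main obstacle—is the bookkeeping of how $\sigma_{v_1}^{\alpha_1}$ interacts with the events $I_{v',\beta'}$ and the operator $\sigma_{v'}^{\alpha'}$ when the inductive step is applied: one must verify that after tilting the first coordinate the set $\sigma_{v_1}^{\alpha_1}(A\cap I_{\overrightarrow v,\overrightarrow\beta})$ still ``looks like'' $A'\cap I_{v',\beta'}$ for a suitable $A'$, so that the induction hypothesis applies verbatim, and that nothing is lost because $\sigma_{v_1}^{\alpha_1}$ is not injective on all of $\Omega$ (it is injective on the slice $\{\omega_{v_1}=\beta_1\}$, which is exactly where $A\cap I_{\overrightarrow v,\overrightarrow\beta}$ lives, so this is fine, but it is the point one must state carefully). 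This is all elementary, relying only on the idempotency and commutation relations \eqref{eqn|propertyA}, \eqref{eqn|propertyB}, the fixed-point description \eqref{eqn|fixedPointsSigmaJAB}, and the product structure of $\mathbb P$; no combinatorics of geodesics is involved.
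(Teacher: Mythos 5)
Your argument is correct: the one-edge tilting identity is exactly the atom-by-atom change-of-variables you describe (injectivity on the slice $\{\omega_{v}=\beta\}$ plus the product structure), and your inductive step goes through because $\sigma_{v_1}^{\alpha_1}$ fixes every coordinate of $v'=(v_2,\dots,v_k)$, so $\sigma_{v_1}^{\alpha_1}\bigl(A\cap I_{\overrightarrow v,\overrightarrow\beta}\bigr)\subseteq I_{v',\beta'}$ and the induction hypothesis applies to it verbatim, after which \eqref{eqn|propertyB} reassembles $\sigma_{v'}^{\alpha'}\circ\sigma_{v_1}^{\alpha_1}=\sigma_{\overrightarrow v}^{\overrightarrow\alpha}$ and the scalar factors multiply to $\mathbb P(\overrightarrow\beta)/\mathbb P(\overrightarrow\alpha)$. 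The organization, however, is the reverse of the paper's. The paper proves the multi-coordinate statement in one shot: it factors each atom as $\mathbb P(\omega)=\mathbb P(\overrightarrow\beta)\,\mathbb P_{\overrightarrow v}(R_{\overrightarrow v}(\omega))$ using the marginal measure on $\{a,b\}^{W\setminus\{\overrightarrow v\}}$, swaps $\mathbb P(\overrightarrow\beta)$ for $\mathbb P(\overrightarrow\alpha)$, recognizes the result as $\mathbb P(\sigma_{\overrightarrow v}^{\overrightarrow\alpha}(\omega))$, and sums over the set using that $\sigma_{\overrightarrow v}^{\overrightarrow\alpha}$ is a bijection from $A\cap I_{\overrightarrow v,\overrightarrow\beta}$ onto its image; the single-edge Proposition \ref{thm|TiltingLemma} is then stated as the special case $m=1$. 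Your route takes that special case as the base lemma and iterates, which costs you the bookkeeping you flag (image contained in $I_{v',\beta'}$, commutation, injectivity only on the slice) but avoids introducing the marginal measure $\mathbb P_{\overrightarrow v}$; the paper's direct computation buys a shorter proof with no induction, at the price of writing the $k$-fold factorization explicitly. Both rest on the same underlying fact, so the difference is structural rather than conceptual.
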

\begin{proof}
Take $\omega\in \Omega$ and fix $\overrightarrow v$. When we remove the components of $\omega$ whose indices appear in the vector $\overrightarrow v$, we obtain a shorter sequence $R_{\overrightarrow v}(\omega)$ that is an element of $\Omega_{\overrightarrow v}=\{a,b\}^{W\setminus\{\overrightarrow v\}}$.  
Let us denote by $\mathbb P_{\overrightarrow v}$ the induced probability measure on $\Omega_{\overrightarrow v}$. 
\begin{align*}
 \mathbb P\left(A\cap I_{\overrightarrow v,\overrightarrow\beta} \right)&=\sum_{\omega \in A\cap I_{\overrightarrow v,\overrightarrow\beta} }\mathbb P(\omega)=
\sum_{\omega \in A\cap I_{\overrightarrow v,\overrightarrow\beta} }\mathbb P(\overrightarrow\beta)\mathbb P_{\overrightarrow v}(R_{\overrightarrow v}(\omega)) \\
&=\frac{\mathbb P(\overrightarrow \beta)}{\mathbb P(\overrightarrow\alpha)}\sum_{\omega \in A\cap I_{\overrightarrow v,\overrightarrow\beta}}\mathbb P(\overrightarrow\alpha)\mathbb P_{\overrightarrow v}(R_{\overrightarrow v}\omega).
\end{align*}
Observe  that 
$\mathbb P(\sigma_{\overrightarrow v}^{\overrightarrow\alpha}(\omega))=\mathbb P(\overrightarrow\alpha)\mathbb P_{\overrightarrow v}(R_{\overrightarrow v}(\omega))$. Therefore, 
\begin{align*} 
\mathbb P\left(A\cap I_{\overrightarrow v,\overrightarrow\beta} \right)
&=\frac{\mathbb P(\overrightarrow \beta)}{\mathbb P(\overrightarrow\alpha)}\sum_{\omega \in A\cap I_{\overrightarrow v,\overrightarrow\beta}}\mathbb P(\sigma_{\overrightarrow v}^{\overrightarrow\alpha}\omega).
\end{align*}
Since 
$\sigma_{\overrightarrow v}^{\overrightarrow \alpha}$ is a bijection from $A\cap I_{\overrightarrow v,\overrightarrow\beta}$ to $\sigma_{\overrightarrow v}^{\overrightarrow\alpha}(A\cap I_{\overrightarrow v,\overrightarrow \beta})$, we can use the substitution $\zeta=\sigma_{\overrightarrow v}^{\overrightarrow \alpha}(\omega)$ in the last summation and obtain the equation \eqref{eqn|TiltingFormulaMD} .
\end{proof}

Let us state a special case of the previous proposition in which the dimensions of vectors are all $1$. 
\begin{proposition}\label{thm|TiltingLemma}
For every event $A$, every pair $(\alpha,\beta)\in\{a,b\}^2$, and every $j\in W$, the following holds 
\begin{align}
\mathbb P\left(A\cap \left\{\omega_j=\beta\right\}\right)&=\frac{\mathbb P(\beta)}{\mathbb P(\alpha)} \mathbb P\left( 
\sigma_j^{\alpha}\left(
A\cap \left\{\omega_j=\beta\right\}\right)\right).
 \label{eqn|TiltingFormula} 
\end{align}
\end{proposition}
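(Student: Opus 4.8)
The plan is to obtain Proposition~\ref{thm|TiltingLemma} as the special case of Proposition~\ref{thm|TiltingLemmaMD} corresponding to vectors of length $1$. Concretely, I would set $k=m=1$, take $\overrightarrow v=(j)$ for a single edge $j\in W$ (a length-one vector automatically has all distinct components, so the hypothesis of Proposition~\ref{thm|TiltingLemmaMD} is vacuous here), and take $\overrightarrow\alpha=(\alpha)$, $\overrightarrow\beta=(\beta)$ for scalars $\alpha,\beta\in\{a,b\}$. Under these choices the composite operator $\sigma_{\overrightarrow v}^{\overrightarrow\alpha}$ from \eqref{eqn|MultiDimensionSigma} reduces to the single map $\sigma_j^{\alpha}$, the cylinder event $I_{\overrightarrow v,\overrightarrow\beta}$ from \eqref{eqn|DefinitionIvalpha} reduces to $\{\omega\in\Omega:\omega_j=\beta\}$, and the scalar marginal probability $\mathbb P(\overrightarrow\beta)$ is just $\mathbb P(\beta)$, and likewise $\mathbb P(\overrightarrow\alpha)=\mathbb P(\alpha)$.

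With these identifications, the general formula \eqref{eqn|TiltingFormulaMD} reads
\[
\mathbb P\left(A\cap\{\omega_j=\beta\}\right)=\frac{\mathbb P(\beta)}{\mathbb P(\alpha)}\,\mathbb P\left(\sigma_j^{\alpha}\left(A\cap\{\omega_j=\beta\}\right)\right),
\]
which is exactly \eqref{eqn|TiltingFormula}. So the proof is essentially a one-line specialization, and I would simply write that \eqref{eqn|TiltingFormula} follows from \eqref{eqn|TiltingFormulaMD} by taking all vectors to have length one, spelling out the three reductions ($\sigma_{\overrightarrow v}^{\overrightarrow\alpha}\mapsto\sigma_j^{\alpha}$, $I_{\overrightarrow v,\overrightarrow\beta}\mapsto\{\omega_j=\beta\}$, and $\mathbb P(\overrightarrow\beta)/\mathbb P(\overrightarrow\alpha)\mapsto\mathbb P(\beta)/\mathbb P(\alpha)$) for the reader's convenience.

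There is no real obstacle: the only thing to be mildly careful about is that the statement of Proposition~\ref{thm|TiltingLemmaMD} uses a vector $\overrightarrow v\in W^k$ with all distinct components while the ambient exponents are written as $\overrightarrow\alpha,\overrightarrow\beta\in\{a,b\}^m$ — in the length-one case $k=m=1$ and the distinctness condition is automatic, so nothing needs to be checked. Since the excerpt permits assuming any earlier result, Proposition~\ref{thm|TiltingLemmaMD} is available, and this makes the argument immediate. (Alternatively, if one preferred a self-contained argument, one could repeat the proof of Proposition~\ref{thm|TiltingLemmaMD} verbatim with a single removed coordinate: partition $\Omega$ by the value of $\omega_j$, factor $\mathbb P(\omega)=\mathbb P(\beta)\mathbb P_j(R_j(\omega))$ on $\{\omega_j=\beta\}$, multiply and divide by $\mathbb P(\alpha)$, recognize $\mathbb P(\alpha)\mathbb P_j(R_j(\omega))=\mathbb P(\sigma_j^{\alpha}(\omega))$, and use that $\sigma_j^{\alpha}$ is a bijection from $A\cap\{\omega_j=\beta\}$ onto its image to re-index the sum — but invoking the already-proved general proposition is cleaner.)
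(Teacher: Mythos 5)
Your proposal is correct and coincides with the paper's treatment: the paper states Proposition \ref{thm|TiltingLemma} explicitly as the special case of Proposition \ref{thm|TiltingLemmaMD} in which all vectors have length one, which is exactly your specialization. Nothing further is needed.
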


We now use \eqref{eqn|TiltingFormula} and \eqref{eqn|TiltingFormulaMD}  to prove the following two theorems.

\begin{theorem}\label{thm|BoundMVGeneral} For every edge $i\in W_n$, the probabilities of the events $A_i$ and $E_i$ satisfy 
\begin{align}
\mathbb P\left(A_i\right)&\leq \frac1{  p}\mathbb P(E_i). \label{eqn|BoundMVGeneral}
\end{align}
\end{theorem}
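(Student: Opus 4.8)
The plan is to relate $A_i$ and $E_i$ through the change-of-variable map $\sigma_i^a$, using the identity $A_i=(\sigma_i^a)^{-1}(E_i)$ from Theorem~\ref{thm|AiDefinition} together with the tilting formula \eqref{eqn|TiltingFormula}. The key structural fact, already recorded in \eqref{eqn:AEinclusion}, is that
$$\sigma_i^a(A_i)=\sigma_i^a\!\left(A_i\cap\{\omega_i=b\}\right)=A_i\cap\{\omega_i=a\}=E_i\cap\{\omega_i=a\}.$$
So the strategy is: split $A_i$ according to the value of $\omega_i$, push the $\{\omega_i=b\}$ part over to $\{\omega_i=a\}$ via $\sigma_i^a$ at the cost of the factor $\mathbb P(a)/\mathbb P(b)=p/(1-p)$, and observe that both resulting pieces land inside $E_i$.

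In detail, first I would write $\mathbb P(A_i)=\mathbb P(A_i\cap\{\omega_i=a\})+\mathbb P(A_i\cap\{\omega_i=b\})$. For the second summand, apply Proposition~\ref{thm|TiltingLemma} with $A=A_i$, $\beta=b$, $\alpha=a$, $j=i$, which gives
$$\mathbb P\!\left(A_i\cap\{\omega_i=b\}\right)=\frac{1-p}{p}\,\mathbb P\!\left(\sigma_i^a\!\left(A_i\cap\{\omega_i=b\}\right)\right)=\frac{1-p}{p}\,\mathbb P\!\left(E_i\cap\{\omega_i=a\}\right),$$
using \eqref{eqn:AEinclusion} for the last equality. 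For the first summand, note $A_i\cap\{\omega_i=a\}=E_i\cap\{\omega_i=a\}$, again by \eqref{eqn:AEinclusion}. Therefore
$$\mathbb P(A_i)=\mathbb P\!\left(E_i\cap\{\omega_i=a\}\right)+\frac{1-p}{p}\,\mathbb P\!\left(E_i\cap\{\omega_i=a\}\right)=\frac1p\,\mathbb P\!\left(E_i\cap\{\omega_i=a\}\right)\leq\frac1p\,\mathbb P(E_i),$$
which is exactly \eqref{eqn|BoundMVGeneral}.

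There is essentially no hard part here once Theorem~\ref{thm|AiDefinition} and the tilting lemma are in hand; the only thing to be careful about is the bookkeeping of which event gets tilted in which direction, and the elementary identity $1+\frac{1-p}{p}=\frac1p$. It is worth double-checking that \eqref{eqn:AEinclusion} is indeed available at this point in the paper (it is, being a consequence of Proposition~\ref{prop|IdempotentPsiXi} applied to $\sigma_i^a,\sigma_i^b$), so the decomposition $A_i\cap\{\omega_i=a\}=E_i\cap\{\omega_i=a\}$ may be cited directly rather than re-derived.
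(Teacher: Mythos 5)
Your proposal is correct and follows essentially the same route as the paper: split $A_i$ according to $\omega_i$, identify both pieces with $E_i\cap\{\omega_i=a\}$ via \eqref{eqn:AEinclusion}, and tilt with Proposition \ref{thm|TiltingLemma} at cost $\frac{1-p}{p}$. The only cosmetic difference is that you combine the two pieces into the exact identity $\mathbb P(A_i)=\frac1p\,\mathbb P\left(E_i\cap\{\omega_i=a\}\right)$ before bounding, whereas the paper bounds each piece by $\mathbb P(E_i)$ separately; the resulting estimate is identical.
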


\begin{proof} Using \eqref{eqn:AEinclusion} we obtain 
\begin{align}
\mathbb P\left(A_i\right)&=\mathbb P\left(A_i\cap \{\omega_i=a\}\right)+\mathbb P\left(A_i\cap \{\omega_i=b\}\right) \nonumber \\
&=\mathbb P\left(E_i\cap \{\omega_i=a\}\right)+\mathbb P\left(A_i\cap \{\omega_i=b\}\right)\nonumber\\
&\leq \mathbb P\left(E_i\right)+\mathbb P\left(A_i\cap \{\omega_i=b\}\right)
. \label{eqn|BoundMV01}
\end{align}
We now use \eqref{eqn|TiltingFormula} and \eqref{eqn:AEinclusion} to bound the second term on the right-hand side of \eqref{eqn|BoundMV01}.
\begin{align}
\mathbb P\left(A_i \cap\left\{\omega_i=b\right\}\right)&=\frac{\mathbb P(b)}{\mathbb P(a)}
\mathbb P\left(\sigma_i^a\left(A_i \cap\left\{\omega_i=b\right\}\right)\right)\nonumber \\ & = \frac{\mathbb P(b)}{\mathbb P(a)}
\mathbb P\left(E_i \cap\left\{\omega_i=a\right\}\right)\leq \frac{\mathbb P(b)}{\mathbb P(a)}
\mathbb P\left(E_i \right). \label{eqn|BoundMV02}
\end{align}
The inequality in \eqref{eqn|BoundMVGeneral} is now a direct consequence of \eqref{eqn|BoundMV01} and \eqref{eqn|BoundMV02}.
\end{proof}

\begin{theorem}\label{thm|multipleFlips}
Assume that $\overrightarrow v\in W^m$ has all distinct components. Assume that $j$ is not a component of $\overrightarrow v$. Assume that $\overrightarrow \gamma$ and $\overrightarrow \delta$ are elements of $\{a,b\}^m$. Then, 
\begin{align}
\mathbb P\left(\left\{\partial_j \left(f\circ \sigma_{\overrightarrow v}^{\overrightarrow\gamma}\right)\neq0\right\} \cap I_{\overrightarrow v,\overrightarrow\delta}\right)
&= 
\frac{\mathbb P(\overrightarrow\delta)}{\mathbb P(\overrightarrow\gamma)} \mathbb P(A_j\cap I_{\overrightarrow v,\overrightarrow\gamma}).
\label{eqn|multipleFlips}
\end{align}
\end{theorem}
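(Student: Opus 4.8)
The plan is to reduce the event on the left-hand side to an influence event for the edge $j$, and then transport the probability using the tilting formula \eqref{eqn|TiltingFormulaMD}. First I would observe that $\partial_j(f\circ\sigma_{\overrightarrow v}^{\overrightarrow\gamma})$ is not quite the same object as $(\partial_j f)\circ\sigma_{\overrightarrow v}^{\overrightarrow\gamma}$ unless $j$ is disjoint from $\overrightarrow v$ — which is exactly our hypothesis. Since $j$ is not a component of $\overrightarrow v$, the operators $\sigma_j^{\delta}$ and $\sigma_{\overrightarrow v}^{\overrightarrow\gamma}$ commute coordinatewise by \eqref{eqn|propertyB}, so $(\partial_j f)\circ\sigma_{\overrightarrow v}^{\overrightarrow\gamma} = \partial_j\bigl(f\circ\sigma_{\overrightarrow v}^{\overrightarrow\gamma}\bigr)$, and therefore
\begin{eqnarray*}
\left\{\partial_j f\circ \sigma_{\overrightarrow v}^{\overrightarrow\gamma}\neq0\right\} &=& \left(\sigma_{\overrightarrow v}^{\overrightarrow\gamma}\right)^{-1}\left(\left\{\partial_j f\neq 0\right\}\right) = \left(\sigma_{\overrightarrow v}^{\overrightarrow\gamma}\right)^{-1}(A_j).
\end{eqnarray*}

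Next I would intersect with $I_{\overrightarrow v,\overrightarrow\delta}$ and apply \eqref{eqn|TiltingFormulaMD} with $A = \left(\sigma_{\overrightarrow v}^{\overrightarrow\gamma}\right)^{-1}(A_j)$, the tilt taking the block $\overrightarrow v$ from $\overrightarrow\delta$ to $\overrightarrow\gamma$:
\begin{eqnarray*}
\mathbb P\left(\left(\sigma_{\overrightarrow v}^{\overrightarrow\gamma}\right)^{-1}(A_j)\cap I_{\overrightarrow v,\overrightarrow\delta}\right) &=& \frac{\mathbb P(\overrightarrow\delta)}{\mathbb P(\overrightarrow\gamma)}\,\mathbb P\left(\sigma_{\overrightarrow v}^{\overrightarrow\gamma}\left(\left(\sigma_{\overrightarrow v}^{\overrightarrow\gamma}\right)^{-1}(A_j)\cap I_{\overrightarrow v,\overrightarrow\delta}\right)\right).
\end{eqnarray*}
The image on the right is contained in $A_j\cap I_{\overrightarrow v,\overrightarrow\gamma}$: applying $\sigma_{\overrightarrow v}^{\overrightarrow\gamma}$ to any point lands in $I_{\overrightarrow v,\overrightarrow\gamma}$ by \eqref{eqn|DefinitionIvalpha}, and if the point was in $\left(\sigma_{\overrightarrow v}^{\overrightarrow\gamma}\right)^{-1}(A_j)$ then its image under $\sigma_{\overrightarrow v}^{\overrightarrow\gamma}$ is in $A_j$ by definition of preimage. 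Monotonicity of probability then yields \eqref{eqn|multipleFlips}. Here I am using that $\sigma_{\overrightarrow v}^{\overrightarrow\gamma}$ restricted to $I_{\overrightarrow v,\overrightarrow\delta}$ is the bijection onto its image referenced in the proof of Proposition \ref{thm|TiltingLemmaMD}, so that \eqref{eqn|TiltingFormulaMD} applies verbatim.

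The main obstacle — really the only subtle point — is the identity $(\partial_j f)\circ\sigma_{\overrightarrow v}^{\overrightarrow\gamma} = \partial_j(f\circ\sigma_{\overrightarrow v}^{\overrightarrow\gamma})$, which hinges on $j\notin\{\overrightarrow v\}$; without that hypothesis the composition $\sigma_j^b\circ\sigma_{\overrightarrow v}^{\overrightarrow\gamma}$ would collapse a coordinate via \eqref{eqn|propertyA} and the chain rule would fail. Everything after that is bookkeeping: commuting the $\sigma$'s past each other using \eqref{eqn|propertyB}, rewriting the level set of a composition as a preimage, and feeding it into Proposition \ref{thm|TiltingLemmaMD}. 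No combinatorial analysis of geodesics is needed, since the structural input about $A_j$ has already been absorbed into \eqref{eqn|AiDefinition} and used only implicitly through the definition $A_j=\{\partial_j f\neq 0\}$.
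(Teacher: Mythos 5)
Your proposal is correct and is essentially the paper's own proof: apply the tilting formula \eqref{eqn|TiltingFormulaMD} to the left-hand event and then observe that its image under $\sigma_{\overrightarrow v}^{\overrightarrow\gamma}$ is contained in $A_j\cap I_{\overrightarrow v,\overrightarrow\gamma}$, with your rewriting of the event as $\left(\sigma_{\overrightarrow v}^{\overrightarrow\gamma}\right)^{-1}(A_j)$ being only a cosmetic repackaging of the same inclusion argument. The one quibble is that your ``main obstacle'' is no obstacle at all: since $\partial_j f\circ\sigma_{\overrightarrow v}^{\overrightarrow\gamma}$ denotes the composition $(\partial_j f)\circ\sigma_{\overrightarrow v}^{\overrightarrow\gamma}$, the identity $\left\{\partial_j f\circ\sigma_{\overrightarrow v}^{\overrightarrow\gamma}\neq 0\right\}=\left(\sigma_{\overrightarrow v}^{\overrightarrow\gamma}\right)^{-1}(A_j)$ is immediate from the definition of preimage and needs no commutation of the $\sigma$'s.
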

\begin{proof} 
By \eqref{eqn|TiltingFormulaMD},
\begin{align*}
&\mathbb P\left(\left\{\partial_j \left(f\circ \sigma_{\overrightarrow v}^{\overrightarrow\gamma}\right)\neq0\right\} \cap I_{\overrightarrow v,\overrightarrow\delta}\right)
\\ &=
\frac{\mathbb P(\overrightarrow\delta)}{\mathbb P(\overrightarrow\gamma)}\mathbb P\left(\sigma_{\overrightarrow v}^{\overrightarrow \gamma}\left(\left\{\partial_j \left(f\circ \sigma_{\overrightarrow v}^{\overrightarrow\gamma}\right)\neq0\right\} \cap I_{\overrightarrow v,\overrightarrow\delta}
\right)
\right).
\end{align*}
It suffices to prove the set equality
\begin{align}
\sigma_{\overrightarrow v}^{\overrightarrow \gamma}\left(\left\{\partial_j \left(f\circ \sigma_{\overrightarrow v}^{\overrightarrow\gamma}\right)\neq0\right\} \cap I_{\overrightarrow v,\overrightarrow\delta}
\right)
&= 
A_j\cap I_{\overrightarrow v,\overrightarrow\gamma}. \label{eqn|multileFlips01}
\end{align}
Since $j$ is not a component of $\overrightarrow v$, repeated application of the property \eqref{eqn|propertyB} gives 
$\sigma_{\overrightarrow v}^{\overrightarrow \gamma}\circ \sigma_j^a=\sigma_j^a\circ \sigma_{\overrightarrow v}^{\overrightarrow \gamma}$ and 
$\sigma_{\overrightarrow v}^{\overrightarrow \gamma}\circ \sigma_j^b=\sigma_j^b\circ \sigma_{\overrightarrow v}^{\overrightarrow \gamma}$. Hence
\begin{align*}
\partial_j 
\left(f\circ \sigma_{\overrightarrow v}^{\overrightarrow\gamma}\right)
&=
f\circ\sigma_j^b\circ\sigma_{\overrightarrow v}^{\overrightarrow\gamma}-f\circ\sigma_j^a\circ\sigma_{\overrightarrow v}^{\overrightarrow\gamma}
=
(\partial_j f)\circ \sigma_{\overrightarrow v}^{\overrightarrow\gamma},
\end{align*}
so
\begin{align*}
\left\{\partial_j \left(f\circ \sigma_{\overrightarrow v}^{\overrightarrow\gamma}\right)\neq 0\right\}
&=
\left\{(\partial_j f)\circ \sigma_{\overrightarrow v}^{\overrightarrow\gamma}\neq 0\right\}
=
\left(\sigma_{\overrightarrow v}^{\overrightarrow\gamma}\right)^{-1}(A_j).
\end{align*}
We now use the standard set-theoretic identity 
$g\!\left(g^{-1}(C)\cap D\right)=C\cap g(D)$, valid for every function 
$g:X\to Y$ and all subsets $D\subseteq X$, $C\subseteq Y$. Combined 
with $\sigma_{\overrightarrow v}^{\overrightarrow \gamma}
\!\left(I_{\overrightarrow v,\overrightarrow\delta}\right)
=I_{\overrightarrow v,\overrightarrow\gamma}$, this yields
\begin{align*}
\sigma_{\overrightarrow v}^{\overrightarrow \gamma}\!\left(
\left(\sigma_{\overrightarrow v}^{\overrightarrow\gamma}\right)^{-1}\!(A_j)
\cap I_{\overrightarrow v,\overrightarrow\delta}\right)
&=
A_j\cap \sigma_{\overrightarrow v}^{\overrightarrow \gamma}\!\left(
I_{\overrightarrow v,\overrightarrow\delta}\right)
=
A_j\cap I_{\overrightarrow v,\overrightarrow\gamma},
\end{align*}
which proves \eqref{eqn|multileFlips01}.
\end{proof}

\section{Variance decomposition}

\subsection{Integration by parts} We will derive several theorems that hold for general random variables that are related to Boolean functions. The standard theory can be found in \cite{ODonnell2014}. We are deviating in minor ways and we will end up with somewhat different formulas. We have defined the derivative $\partial_i$ in a non-standard way to make our future results about $A_i$, $E_i$, $\hat A_i$, and $\hat E_i$ the simplest possible. We will also deviate a little from the literature when we define the Fourier basis below. Due to these changes, and since the results we need are elementary, it is most convenient to re-derive them from scratch.

Theorems \ref{thm|integrationByPartsSpec} and \ref{thm|integrationByParts} are analogous to integration by parts formulas from calculus.

\begin{proposition}\label{thm|BayesFormula} For every random variable $\varphi$ on $\Omega$, its expected value $\mathbb E[\varphi]$ can be evaluated using 
the equation
 \begin{align}
\mathbb E\left[\varphi\right]&= p\mathbb E[\varphi\circ \sigma_i^a]+(1-p)\mathbb E[\varphi\circ \sigma_i^b]. 
\label{eqn|BayesFormula}
\end{align}
\end{proposition}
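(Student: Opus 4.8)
The plan is to prove \eqref{eqn|BayesFormula} by decomposing $\Omega$ according to the value $\omega_i$ of the passage time over edge $i$ and then using the product structure of $\mathbb P$. Since the events $\{\omega_i=a\}$ and $\{\omega_i=b\}$ partition $\Omega$, we start from $\mathbb E[\varphi]=\mathbb E[\varphi\cdot 1_{\omega_i=a}]+\mathbb E[\varphi\cdot 1_{\omega_i=b}]$. On the event $\{\omega_i=a\}$ the map $\sigma_i^a$ fixes $\omega$, so property \eqref{eqn|propertyF} gives $\varphi\cdot 1_{\omega_i=a}=(\varphi\circ\sigma_i^a)\cdot 1_{\omega_i=a}$, and likewise $\varphi\cdot 1_{\omega_i=b}=(\varphi\circ\sigma_i^b)\cdot 1_{\omega_i=b}$. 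Hence
$$\mathbb E[\varphi]=\mathbb E\bigl[(\varphi\circ\sigma_i^a)\cdot 1_{\omega_i=a}\bigr]+\mathbb E\bigl[(\varphi\circ\sigma_i^b)\cdot 1_{\omega_i=b}\bigr].$$

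The second step is to observe that $\varphi\circ\sigma_i^a$ does not depend on the coordinate $\omega_i$: by \eqref{eqn:definitionSigma}, $[\sigma_i^a(\omega)]_k$ is independent of $\omega_i$ for every $k$, so $\varphi\circ\sigma_i^a$ is a function of $(\omega_k)_{k\neq i}$ alone. Because the coordinates of $\omega$ are independent under $\mathbb P$, the random variable $\varphi\circ\sigma_i^a$ is independent of the indicator $1_{\omega_i=a}$, so $\mathbb E[(\varphi\circ\sigma_i^a)\cdot 1_{\omega_i=a}]=\mathbb E[\varphi\circ\sigma_i^a]\cdot\mathbb P(\omega_i=a)=p\,\mathbb E[\varphi\circ\sigma_i^a]$. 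The same argument with the value $b$ gives $\mathbb E[(\varphi\circ\sigma_i^b)\cdot 1_{\omega_i=b}]=(1-p)\,\mathbb E[\varphi\circ\sigma_i^b]$. Combining the two displays yields \eqref{eqn|BayesFormula}.

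There is essentially no obstacle here; the only point that needs a word of justification is the independence of $\varphi\circ\sigma_i^a$ from $1_{\omega_i=a}$, which is immediate from the facts that $\sigma_i^a$ overwrites the $i$-th coordinate and that $\mathbb P$ is a product measure. An equivalent route, phrased without the word ``independence,'' is to note that \eqref{eqn|BayesFormula} is just the $A=\Omega$, one-dimensional case of the tilting identity \eqref{eqn|TiltingFormula}: applying that identity (or the direct enumeration in the proof of Proposition \ref{thm|TiltingLemmaMD}) to $A=\Omega$ with each choice $\beta\in\{a,b\}$ and $\alpha=\beta$ reproduces the two terms above after summing, which is a convenient cross-check that the constants $p$ and $1-p$ come out correctly.
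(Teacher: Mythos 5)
Your proof is correct and follows essentially the same route as the paper: both partition on the value of $\omega_i$, apply \eqref{eqn|propertyF} to replace $\varphi$ by $\varphi\circ\sigma_i^a$ and $\varphi\circ\sigma_i^b$ on the respective events, and then exploit the product structure of $\mathbb P$. The only cosmetic difference is that you phrase the last step as independence of $\varphi\circ\sigma_i^\delta$ from $1_{\omega_i=\delta}$, whereas the paper routes it through the marginal measure $\mathbb E_i$ and the idempotency property \eqref{eqn|propertyA}; these are the same fact.
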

\begin{proof}
Let us denote by $\mathbb P_i$ the product measure on $\{a,b\}^{W\setminus \{i\}}$ defined as 
\begin{align*} 
\mathbb P_i(\omega)&= \prod_{k\in W\setminus\{i\}} p^{\mathbf1_a(\omega_k)}(1-p)^{1-\mathbf1_a(\omega_k)}. 
\end{align*}  Let $\mathbb  E_i$ be the corresponding expected value. We use \eqref{eqn|propertyF} to obtain  
\begin{align} \nonumber
\mathbb E[\varphi]&= \mathbb E\left[\varphi\cdot 1_{\omega_i=a}\right]+\mathbb E\left[\varphi\cdot 1_{\omega_i=b}\right]\\
\nonumber 
&= \mathbb E\left[\varphi\circ \sigma_i^a\cdot 1_{\omega_i=a}\right]+\mathbb E\left[\varphi\circ \sigma_i^b\cdot 1_{\omega_i=b}\right]\\
&=  p\mathbb E_i\left[\varphi\circ \sigma_i^a\right]+(1-p)\mathbb E_i\left[\varphi\circ \sigma_i^b\right]. \label{eqn|BayesFormulaIntermediateResult}
\end{align}
We can apply \eqref{eqn|BayesFormulaIntermediateResult} to $\varphi\circ \sigma_i^a$ and use \eqref{eqn|propertyA}.  
\begin{align*}
\mathbb E[\varphi\circ \sigma_i^a]
&= p\mathbb E_i\left[\varphi\circ \sigma_i^a\circ\sigma_i^a\right]+(1-p)\mathbb E_i\left[\varphi\circ \sigma_i^a\circ \sigma_i^b\right]\\
&= p\mathbb E_i\left[\varphi\circ \sigma_i^a\right]+(1-p)\mathbb E_i\left[\varphi\circ \sigma_i^a\right]\\
&=\mathbb E_i\left[\varphi\circ\sigma_i^a\right].
\end{align*}
 In an analogous way we derive the equality $\mathbb E\left[\varphi\circ \sigma_i^b\right]=\mathbb E_i\left[\varphi \circ\sigma_i^b\right]$. The equation \eqref{eqn|BayesFormulaIntermediateResult} becomes  \eqref{eqn|BayesFormula}. 
\end{proof}

For $\omega\in \Omega$ and $i\in W$, let us define 
\begin{align*}
r_i(\omega)&=\left\{\begin{array}{ll} 
-\sqrt{\frac{1-p}{p}},& \text{if } \omega_i=a,\\
\sqrt{\frac{p}{1-p}},& \text{if } \omega_i=b.
\end{array}
\right.   
\end{align*}
For each $i$, we have $\mathbb E[r_i]=0$ and $\text{var}(r_i)=1$.
For $S\subseteq W$, we define

 \begin{align*}r_S(\omega)&=\prod_{i\in S}r_i(\omega). 
\end{align*}

Due to independence, we have $\mathbb E\left[r_S\right]=0$ and $\text{var}(r_S)=1$ whenever $S\neq \emptyset$. Also, if $S$ and $T$ are different sets, then their dot product 
$\mathbb E\left[r_Sr_T\right]$ must be equal to $0$. 
Therefore, the functions $\left(r_S\right)_{S\subseteq W}$ form an orthonormal basis of $L^2(\Omega)$; see Section 2 of \cite{talagrand1994}.

\begin{theorem}\label{thm|integrationByPartsSpec} 
For every nonempty $S\subseteq W$ and every random variable $\varphi$, the expected value of $\varphi r_S$ satisfies
\begin{align*}
\mathbb E\left[\varphi r_S\right]&= \sqrt{p(1-p)}^{|S|}\mathbb E\left[ \partial_S\varphi  \right]. 
\end{align*}
\end{theorem}

 Theorem \ref{thm|integrationByPartsSpec} is a special case obtained by placing $S=T$ in the following more general result.  

\begin{theorem}\label{thm|integrationByParts}
For every two sets $T$ and $S$ with $T\subseteq S\subseteq W$, and every random variable $\varphi$, the expected value of $\varphi r_S$ satisfies \begin{align}
\mathbb E\left[\varphi r_S\right]&= \sqrt{p(1-p)}^{|T|}\mathbb E\left[\left(\partial_T\varphi\right) \cdot r_{S\setminus T}\right].
\label{eqn|integrationByParts}
\end{align} 
\end{theorem}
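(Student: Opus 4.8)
The plan is to prove \eqref{eqn|integrationByParts} by induction on $|T|$, with the real content concentrated in the single-edge case $|T|=1$, which I would extract directly from the conditioning identity of Proposition \ref{thm|BayesFormula}.

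For the base case, fix $i\in S$ and apply \eqref{eqn|BayesFormula} with the random variable $\varphi r_S$ and the edge $i$. The useful observations are: (i) $r_S=r_i\cdot r_{S\setminus\{i\}}$, where $r_{S\setminus\{i\}}$ does not depend on the $i$-th coordinate and is therefore left invariant by both $\sigma_i^a$ and $\sigma_i^b$; and (ii) $r_i\circ\sigma_i^a$ and $r_i\circ\sigma_i^b$ are the \emph{constant} functions $-\sqrt{(1-p)/p}$ and $\sqrt{p/(1-p)}$ by \eqref{eqn|riDefinition}, since $\sigma_i^{\delta}$ forces $\omega_i=\delta$. Hence $(\varphi r_S)\circ\sigma_i^{\delta}=(\varphi\circ\sigma_i^{\delta})\cdot(r_i\circ\sigma_i^{\delta})\cdot r_{S\setminus\{i\}}$, and the probability weights combine with these constants as $p\cdot\big(-\sqrt{(1-p)/p}\big)=-\sqrt{p(1-p)}$ and $(1-p)\cdot\sqrt{p/(1-p)}=\sqrt{p(1-p)}$. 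Substituting into \eqref{eqn|BayesFormula} collapses it to
\begin{eqnarray*}
\mathbb E[\varphi r_S]&=&\sqrt{p(1-p)}\,\mathbb E\big[(\varphi\circ\sigma_i^b-\varphi\circ\sigma_i^a)\,r_{S\setminus\{i\}}\big]\\
&=&\sqrt{p(1-p)}\,\mathbb E\big[(\partial_i\varphi)\,r_{S\setminus\{i\}}\big],
\end{eqnarray*}
by the definition \eqref{eqn:definitionPartialJ} of $\partial_i$; this is exactly \eqref{eqn|integrationByParts} for $T=\{i\}$ (and it is also Theorem \ref{thm|integrationByPartsSpec} when in addition $S=\{i\}$).

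For the inductive step, assume \eqref{eqn|integrationByParts} holds for every pair $T'\subseteq S$ with $|T'|=m\ge 1$, and let $|T|=m+1$. I would pick $i\in T$, set $T'=T\setminus\{i\}$, and first apply the inductive hypothesis to $T'\subseteq S$ to get $\mathbb E[\varphi r_S]=\sqrt{p(1-p)}^{\,m}\,\mathbb E[(\partial_{T'}\varphi)\,r_{S\setminus T'}]$. Since $i\in S\setminus T'$, I would then apply the base case to the random variable $\partial_{T'}\varphi$ and the edge $i$, obtaining $\mathbb E[(\partial_{T'}\varphi)\,r_{S\setminus T'}]=\sqrt{p(1-p)}\,\mathbb E[(\partial_i\partial_{T'}\varphi)\,r_{(S\setminus T')\setminus\{i\}}]$. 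Finally $(S\setminus T')\setminus\{i\}=S\setminus T$, and $\partial_i\partial_{T'}\varphi=\partial_T\varphi$ by the recursion \eqref{eqn:definitionPartialS} (whose independence of the chosen element is guaranteed by \eqref{eqn|LeibnitzRule}); combining the two factors of $\sqrt{p(1-p)}$ yields \eqref{eqn|integrationByParts}.

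I do not expect a genuine obstacle here: the only place demanding care is the single-edge computation, namely tracking the two constant values of $r_i\circ\sigma_i^{\delta}$ together with the fact that every factor $r_j$ with $j\neq i$ is untouched by $\sigma_i^{\delta}$. One could instead avoid the induction by applying Proposition \ref{thm|BayesFormula} simultaneously over all edges of $T$ and recognizing the resulting signed sum of $\varphi\circ\sigma_{s_1}^{\theta_1}\circ\cdots\circ\sigma_{s_m}^{\theta_m}$ as $\partial_T\varphi$ via \eqref{eqn|LeibnitzRule}; I prefer the inductive route because the paper states the conditioning formula only for one edge, and the degenerate case $T=\emptyset$ is trivial under the convention $\partial_\emptyset\varphi=\varphi$.
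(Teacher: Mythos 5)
Your proposal is correct and matches the paper's argument: the paper likewise proves the single-edge case $T=\{i\}$ by applying Proposition \ref{thm|BayesFormula} to $\varphi r_S$, using that $r_S\circ\sigma_i^{\delta}$ turns $r_i$ into the constants $-\sqrt{(1-p)/p}$ and $\sqrt{p/(1-p)}$ so the weights combine to $\pm\sqrt{p(1-p)}$, and then dispatches the general case with exactly the induction you spell out.
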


\begin{proof}
Let us first consider the case $T=\{i\}$. An application of \eqref{eqn|BayesFormula} results in 
\begin{align*}
\mathbb E\left[\varphi r_S\right]&=p\mathbb E\left[\varphi r_S\circ\sigma_i^a\right]+(1-p)\mathbb E\left[\varphi r_S\circ\sigma_i^b\right]\\
&=p\mathbb E\left[\left(\varphi\circ\sigma_i^a\right) \left(r_S\circ\sigma_i^a\right)\right]+(1-p)\mathbb E\left[\left(\varphi \circ\sigma_i^b\right)\left( r_S\circ\sigma_i^b\right)\right]\\
&=-p\mathbb E\left[\left(\varphi\circ\sigma_i^a\right) \cdot \sqrt{\frac{1-p}{p}} r_{S\setminus \{i\}} \right] \\
&\quad+(1-p)\mathbb E\left[\left(\varphi \circ\sigma_i^b\right)
\cdot  \sqrt{\frac{p}{1-p}} r_{S\setminus \{i\}}\right]\\
&=\sqrt{p(1-p)}\mathbb E\left[\left(\varphi\circ\sigma_i^b-\varphi\circ\sigma_i^a\right)  r_{S\setminus \{i\}} \right]\\
&=\sqrt{p(1-p)}\mathbb E\left[\partial_i\varphi r_{S\setminus\{i\}}\right].
\end{align*}
The general result (\ref{eqn|integrationByParts}) follows by a straightforward induction. 
\end{proof}

\begin{proof}[Proof of Theorem \ref{thm|varianceFormula}] The coefficients of $f$ in base $\left(r_S\right)_{S\subseteq W}$ will be denoted by $a_S$. The coefficients satisfy 
\begin{align*}
a_S&=\mathbb E\left[fr_S\right]=\sqrt{p(1-p)}^{|S|}\mathbb E\left[\partial_Sf\right].
\end{align*}
Except for the coefficient $a_{\emptyset}$, the random variables $f-\mathbb E[f]$ and $f$ have the same coefficients. This implies \eqref{eqn|varianceFormula}.
\end{proof}
 
\subsection{Beckner-Bonami inequality}
We will use the following variant of Beckner-Bonami inequality. 
\begin{theorem}\label{thm|Beckner} 
If $P_Lf$ is the projection of $f$ to the $\text{span} \left\{r_Q: |Q|\leq L\right\}$, then for each $q\geq 2$, there exists a constant $\alpha=\alpha(p,q)>0$ such that
\begin{align*}\|P_Lf\|_2^2&\leq e^{\alpha L}\|f\|_{q'}^2,
\end{align*}
where $q'$ is the conjugate of $q$.
\end{theorem}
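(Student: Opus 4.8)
The plan is to deduce Theorem~\ref{thm|Beckner} from hypercontractivity together with a short duality argument; the inequality should be regarded as a \emph{reverse} Hölder estimate that fails in general but becomes true once the Fourier degree of $g$ is capped at $L$. The first step I would carry out is to record the $p$-biased Beckner--Bonami (hypercontractive) inequality in operator form: for the fixed $p\in(0,1)$ and the fixed exponent $q\ge 2$ there is a constant $\rho=\rho(p,q)\in(0,1)$ such that the noise operator $T_\rho$, defined on the orthonormal basis $(r_S)_{S\subseteq W}$ by $T_\rho r_S=\rho^{|S|}r_S$ and extended linearly to $L^2(\Omega)$, satisfies $\|T_\rho h\|_q\le\|h\|_2$ for every $h$. (If $q=2$ then $q'=2$ and the statement is trivial with any $\alpha>0$, so one may assume $q>2$.)

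The second step is to turn this into a comparison of norms on the low-degree subspace $B_L=\mathrm{span}\{r_Q:|Q|\le L\}$. Given $g\in B_L$, write $g=\sum_{|S|\le L}g_S\,r_S$ and put $h=\sum_{|S|\le L}\rho^{-|S|}g_S\,r_S$. Then $T_\rho h=g$, and since $\rho^{-|S|}\le\rho^{-L}$ for $|S|\le L$,
\[
\|h\|_2^2=\sum_{|S|\le L}\rho^{-2|S|}g_S^2\le\rho^{-2L}\sum_{|S|\le L}g_S^2=\rho^{-2L}\|g\|_2^2 .
\]
Hypercontractivity then gives $\|g\|_q=\|T_\rho h\|_q\le\|h\|_2\le\rho^{-L}\|g\|_2$; that is, on $B_L$ the $L^q$ and $L^2$ norms differ by at most the factor $\rho^{-L}$. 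The third step is to dualize this. Because $g\in B_L$ and orthogonal projection onto the finite-dimensional space $B_L$ does not increase the $L^2$ norm, $\|g\|_2=\sup\{\langle g,h\rangle:h\in B_L,\ \|h\|_2\le 1\}$, where $\langle g,h\rangle=\mathbb E[gh]$. For such $h$, Hölder's inequality with conjugate exponents $q',q$ and the comparison just proved give
\[
\langle g,h\rangle\le\|g\|_{q'}\,\|h\|_q\le\rho^{-L}\|g\|_{q'}\,\|h\|_2\le\rho^{-L}\|g\|_{q'} .
\]
Taking the supremum over $h$ yields $\|g\|_2\le\rho^{-L}\|g\|_{q'}$, and squaring gives the theorem with $\alpha=\alpha(p,q)=2\log(1/\rho)>0$.

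The only genuinely substantial input, and the step I expect to be the main obstacle, is the $p$-biased hypercontractive inequality used in the first step: for $p\ne\frac12$ this is not the classical symmetric Beckner--Bonami inequality of \cite{beckner1975,Bonami1970}. I would obtain it from the single-coordinate estimate on $(\{a,b\},(p,1-p))$, namely $\|c_0+\rho\,c_1 r_1\|_q\le(c_0^2+c_1^2)^{1/2}$ for a suitable $\rho(p,q)\in(0,1)$, and then pass to the $|W|$-coordinate statement by the standard tensorization induction, which goes through precisely because the inequality is of product/contraction type. The optimal value of $\rho(p,q)$ deteriorates as $p\to0$ or $p\to1$, but since $p$ is fixed in $(0,1)$ throughout the paper, all that is needed is $\rho(p,q)>0$, hence $\alpha(p,q)<\infty$; one could even afford the crude route of deducing the biased two-point inequality from the symmetric one by a comparison-of-measures argument, at the cost of a worse but still finite constant.
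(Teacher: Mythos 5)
Your argument is correct, but note that the paper does not actually prove Theorem \ref{thm|Beckner}: it is quoted verbatim from Talagrand \cite{talagrand1994} (Proposition 2.2 there), so you are reconstructing the proof that the paper outsources. What you wrote is essentially that standard derivation: two-point $p$-biased hypercontractivity, tensorized to give $\|T_\rho h\|_q\le\|h\|_2$, then the low-degree comparison $\|g\|_q\le\rho^{-L}\|g\|_2$ on $B_L=\mathrm{span}\{r_Q:|Q|\le L\}$, then H\"older with exponents $(q',q)$. Your step 3 can be shortened: since $g\in B_L$ you may simply take $h=g$ and write $\|g\|_2^2=\mathbb E[g\cdot g]\le\|g\|_{q'}\|g\|_q\le\rho^{-L}\|g\|_{q'}\|g\|_2$, which gives $\|g\|_2\le\rho^{-L}\|g\|_{q'}$ without invoking the dual characterization of the $L^2$ norm; the conclusion with $\alpha=2\log(1/\rho)$ is as you state, and the case $q=2$ is indeed trivial. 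The one place to be careful is your closing aside: the biased single-coordinate inequality must be established \emph{with constant exactly $1$} (possibly at the price of a smaller $\rho(p,q)$), because tensorization multiplies any constant over the $|W|$ coordinates; a "comparison of measures" that introduces a multiplicative constant $C>1$ in the two-point inequality would therefore be fatal, whereas a direct proof of $\|c_0+\rho c_1 r_1\|_q\le(c_0^2+c_1^2)^{1/2}$ for small $\rho(p,q)>0$ (a compactness/Taylor argument using $\mathbb E[r_1]=0$, or the known biased Bonami--Beckner estimates) is fine and is all that is needed since $p$ is fixed.
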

\begin{proof} This is an easy consequence of  a result proved in \cite{talagrand1994}. Proposition 2.2 on page 1580 of \cite{talagrand1994} states the following:
If $a_S=\mathbb E[fr_S]$, then for fixed $k$ the following holds
\begin{align}
\sum_{|S|=k} a_S^2 &\leq (q-1)^k\theta^{2k}\cdot \|f\|_{q'}^2, \label{eqn|BecknerBonami}
\end{align}
where $\theta=\frac1{\sqrt{p(1-p)}}$.
Substituting $\beta=\frac{q-1}{ p(1-p) }$ and summing \eqref{eqn|BecknerBonami} for $k\in\{1,2,\dots, L\}$ gives us 
\begin{align*}
\|P_Lf\|_2^2&\leq \frac{\beta^{L+1}-1}{\beta-1}\cdot \|f\|_{q'}^2.
\end{align*}
Clearly, we can find $\alpha$ that depends only on $p$ and $q$ such that $\frac{\beta^{L+1}-1}{\beta-1}\leq e^{\alpha L}$. 
\end{proof}

\subsection{Bound on the remainder $R_k$}\label{subs|ProofRk} We now prove Theorem \ref{thm|BoundRemainder} that was stated in the introduction. 
\begin{proof}[Proof of Theorem \ref{thm|BoundRemainder}]   
 Let $\mathcal F$ be the family of those subsets of $W$ that have cardinality at least $k$.  
Let us introduce some notation that will simplify the writing. We will denote by $\mathcal I_T$ the set of all subsets of $W$ that contain $T$, i.e. 
\begin{align*}
\mathcal I_T&=\left\{S\subseteq W: T\subseteq S\right\}.
\end{align*}
We will often write $\mathcal I_t$ instead of $\mathcal I_{\{t\}}$. 
\begin{align*}
R_k(f)&=\sum_{S\in\mathcal F} a_S^2= \sum_{S\in\mathcal F}a_S^2\cdot\frac1{|S|}\cdot \sum_{i_1\in W} 1_{i_1\in S}=
\sum_{i_1\in W}\left(\sum_{S\in\mathcal F\cap\mathcal I_{i_1}}\frac{a_S^2}{|S|}\right).
\end{align*}
The right-hand side can be further expanded using indicators.
\begin{align*}
R_k(f)
&=\sum_{i_1\in W}\left(\sum_{S\in\mathcal F\cap\mathcal I_{i_1}}\frac{a_S^2}{|S|(|S|-1)}\sum_{i_2\in W\setminus\{i_1\}} 1_{i_2\in S}\right).\end{align*}
We can change the order of summations and obtain 
\begin{align}
R_k(f)
&=\sum_{i_1\in W}\left(\sum_{i_2\in W\setminus\{i_1\}}\left(\sum_{S\in\mathcal F\cap \mathcal I_{\{i_1,i_2\}}}\frac{a_S^2}{|S|(|S|-1)}\right)\right).
\label{eqn|VBoundL01}
 \end{align}
 We can now continue in the same way as in \eqref{eqn|VBoundL01} until the number of indices becomes $k$. 
 The summation \eqref{eqn|VBoundL01} becomes 
 \begin{align} 
R_k(f)
 &=\sum_{M\subseteq W, |M|=k}k!\cdot \left(\sum_{S\in\mathcal F\cap\mathcal I_M}\frac{a_S^2}{|S|(|S|-1)(|S|-2)\cdots (|S|-k+1)}\right). \label{eqn|VBoundL01P5}
\end{align}
For each $S$ with $M\subseteq S$, we apply \eqref{eqn|integrationByParts} with $\varphi=f$ and $T=M$ to obtain
\begin{align} a_S&=\mathbb E\left[f\cdot r_S\right]= \sqrt{p(1-p)}^{|M|}\cdot\mathbb E\left[\partial_Mf\cdot r_{S\setminus M}\right]. 
\label{eqn|VBoundL01P7}
\end{align}
Equations \eqref{eqn|VBoundL01P5} and \eqref{eqn|VBoundL01P7} imply
\begin{align}
R_k(f)
 &= \sum_{M\subseteq W, |M|=k}k!\cdot\left(\sum_{S\in\mathcal F\cap\mathcal I_M}\frac{(p(1-p))^{k}\mathbb E\left[\left(\partial_Mf\right)r_{S\setminus M}\right]^2}{|S|(|S|-1)(|S|-2)\cdots (|S|-k+1)}\right).
 \label{eqn|VBoundL02}
 \end{align}
For fixed $M$, let us denote by $\Sigma(M)$ the inner summation in \eqref{eqn|VBoundL02}. Formally, 
\begin{align}
\Sigma(M)&=k!\cdot\left(\sum_{S\in\mathcal F\cap\mathcal I_M}\frac{(p(1-p))^{k}\mathbb E\left[\left(\partial_Mf\right)r_{S\setminus M}\right]^2}{|S|(|S|-1)(|S|-2)\cdots (|S|-k+1)}\right). 
\label{eqn|DefSigmaM}
\end{align}
We will assume that the summation is restricted to the sets $M$ for which $\|\partial_Mf \|_1$ is non-zero. Since we are working with finite sample space
in which every outcome has positive probability, the $L^1$-norm is zero only when the function $\partial_Mf$ is identically equal to $0$. 
We split $\Sigma(M)$ into two groups: the summation $H_M^-$ corresponding to sets of sizes smaller than $L_M$; and the summation $H_M^+$ corresponding to sets of sizes larger than or equal to $L_M$. The integer $L_M$ will be determined later. If the number of elements of $S$ is at least $k$, then the product of the numbers 
 $|S|$, $|S|-1$, $\dots$, $|S|-k+1$ in the denominator  is greater than or equal to $k!$. Hence,
\begin{align*}
 H_M^- &= k!\cdot  \sum_{S\in\mathcal I_M, k\leq|S|<L_M}\frac{(p(1-p))^{k}\mathbb E\left[\left(\partial_Mf\right)r_{S\setminus M}\right]^2}{|S|(|S|-1)(|S|-2)\cdots (|S|-k+1)} \\
&\leq  {(p(1-p))^k} \left(\sum_{S\in\mathcal I_M, k\leq|S|<L_M}
\mathbb E\left[\left(\partial_Mf\right)r_{S\setminus M}\right]^2\right). \end{align*}
There is an obvious bijection between $\mathcal I_M$ and the subsets of $W\setminus M$. Hence, we can do the substitution $Q=S\setminus M$ and obtain the following bound for $H_M^-$.
\begin{align*}  H_M^-  &\leq {(p(1-p))^k}  \sum_{Q\subseteq W\setminus M, |Q|< L_M-k} 
\mathbb E\left[\left(\partial_Mf\right)r_{Q}\right]^2 .
\end{align*}
We will now use \eqref{eqn|VBoundL02} to find an upper bound for $H_M^+$. The cardinalities of sets $S$ are now bigger than or equal to $L_M$. 
Therefore, the product of numbers  $|S|$, $|S|-1$, $\dots$, $|S|-k+1$  in the denominator is greater than or equal to $L_M\cdot (L_M-1)\cdots (L_M-k+1)$ which is equal to $k!\cdot \binom{L_M}{k}$. 
Therefore, the sum $H_M^+$ satisfies
\begin{align*}
\nonumber H_M^+&=k!  \sum_{S\in\mathcal I_M, |S|\geq L_M}\frac{(p(1-p))^{k}\mathbb E\left[\left(\partial_Mf\right)r_{S\setminus M}\right]^2}{|S|(|S|-1)(|S|-2)\cdots (|S|-k+1)}\\
\nonumber
&\leq\frac{(p(1-p))^k}{\binom{L_M}{k}} \sum_{S\in\mathcal I_M, |S|\geq L_M}
\mathbb E\left[\left(\partial_Mf\right)r_{S\setminus M}\right]^2 \\
\nonumber
&=\frac{(p(1-p))^k}{\binom{L_M}{k}}\sum_{Q\subseteq W\setminus M, |Q|\geq L_M-k} 
\mathbb E\left[\left(\partial_Mf\right)r_{Q}\right]^2.
\end{align*}
Let us now set our first requirement for $L_M$. This requirement will be $L_M\geq 2k$. Then, each of the numbers $L_M$, $L_M-1$, $\dots$, $L_M-k+1$ is greater than or equal to $\frac{L_M}2$ and their product is at 
least $L_M^k/2^k$. Therefore,  $\binom{L_M}{k}\geq \frac{L_M^k}{2^k\cdot k!}$. The expansion \eqref{eqn|VBoundL02} allows us to bound $R_k(f)$ as follows.
\begin{align}\nonumber
R_k(f)&\leq
(p(1-p))^k\sum_{M\subseteq W, |M|=k} \left(
 \sum_{Q\subseteq W\setminus M, |Q|<L_M-k}\mathbb E\left[\left(\partial_Mf\right)r_Q\right]^2\right.\\
  &\quad\left.
+
\frac{2^kk!}{L_M^k}\sum_{Q\subseteq W\setminus M, |Q|\geq L_M-k}\mathbb E\left[\left(\partial_Mf\right)r_Q\right]^2
\right). \label{eqn|VBoundL03Before}
\end{align}
The projection $P_{L_M-k}(\partial_Mf)$ of $\partial_Mf$ to the subspace spanned by $\{r_Q\}$ for sets $Q$ of cardinality strictly smaller than $L_M-k$ satisfies 
\begin{align*}
P_{L_M-k}(\partial_Mf)&=\sum_{Q\subseteq W\setminus M,|Q|<L_M-k} \mathbb E\left[\left(\partial_Mf\right)r_Q\right]r_Q.
\end{align*}
Therefore, the inequality \eqref{eqn|VBoundL03Before} becomes 
\begin{align}\nonumber 
R_k(f)&\leq (p(1-p))^k\cdot\sum_{M\subseteq W, |M|=k} \left(
  \left\|P_{L_M-k}(\partial_Mf)\right\|_2^2\right.\\
  &\quad\left.
+
\frac{2^kk!}{L_M^k}\sum_{Q\subseteq W\setminus M, |Q|\geq L_M-k}\mathbb E\left[\left(\partial_Mf\right)r_Q\right]^2
\right). \label{eqn|VBoundL03Almost}
\end{align}
The second term on the right-hand side of \eqref{eqn|VBoundL03Almost} has an excellent coefficient $L_M^k$ in the denominator. We can afford the following generous bound 
\[\sum_{Q\subseteq W\setminus M, |Q|\geq L_M-k}\mathbb E\left[\left(\partial_Mf\right)r_Q\right]^2
 \leq \left\|\partial_Mf\right\|_2^2.\]
The inequality \eqref{eqn|VBoundL03Almost} becomes 
\begin{align} 
R_k(f) 
&\leq
(p(1-p))^k\sum_{M\subseteq W, |M|=k} \left(
  \left\|P_{L_M-k}(\partial_Mf)\right\|_2^2
+
\frac{2^kk!}{L_M^k}\left\|\partial_Mf\right\|_2^2
\right). \label{eqn|VBoundL03}
\end{align}
Since the cardinalities are strictly smaller than $L_M$, we can use Theorem \ref{thm|Beckner}.
We will take $q'=\frac32$. 
There exists a scalar $\alpha$ such that 
\begin{align}
  \left\|P_{L_M-k}(\partial_Mf)\right\|_2^2&\leq  e^{\alpha L_M}\|\partial_Mf\|_{3/2}^2.  \label{eqn|Beckner}
\end{align}
We rewrite the right-hand side of \eqref{eqn|Beckner} as $\|\partial_Mf\|_{3/2}^2 = \mathbb E\left[|\partial_Mf|^{3/2}\right]^{4/3}$. Applying the Cauchy-Schwarz inequality to $|\partial_Mf|$ and $|\partial_Mf|^{1/2}$,
\begin{align}
\mathbb E\left[|\partial_Mf|^{3/2}\right] &= \mathbb E\left[|\partial_Mf|\cdot |\partial_Mf|^{1/2}\right] \nonumber \\
&\leq \mathbb E\left[|\partial_Mf|^2\right]^{1/2}\cdot \mathbb E\left[|\partial_Mf|\right]^{1/2} \nonumber \\
&= \|\partial_Mf\|_2\cdot \|\partial_Mf\|_1^{1/2}. \label{eqn|CauchySchwarzPartialM}
\end{align}
Raising \eqref{eqn|CauchySchwarzPartialM} to the $\frac43$ power and combining with \eqref{eqn|Beckner},
\begin{align}
\|P_{L_M-k}(\partial_Mf)\|_2^2 &\leq  e^{\alpha L_M}\|\partial_Mf\|_2^{4/3}\cdot \|\partial_Mf\|_1^{2/3} \nonumber \\
&= \frac{e^{\alpha L_M}}{\left(\|\partial_Mf\|_2/\|\partial_Mf\|_1\right)^{2/3}}
\|\partial_Mf\|_2^2 . \label{eqn|VBoundL05}
\end{align}
Using \eqref{eqn|VBoundL05} we transform \eqref{eqn|VBoundL03} into 
\begin{align} 
R_k(f)&\leq
\left(p(1-p)\right)^k \nonumber 
\\
&\quad\times\sum_{M\subseteq W, |M|=k}
\left(\frac{e^{\alpha L_M}}{\left(\|\partial_Mf\|_2/\|\partial_Mf\|_1\right)^{2/3}}
+\frac{2^kk!}{L_M^k}\right)
\|\partial_Mf\|_2^2.
\label{eqn|VBoundL07Unfinished}
\end{align}
Let us introduce $ \theta=\left(p(1-p)\right)^k\cdot 2^k\cdot k!$ and 
\begin{align} B_M&=\left(\frac{\|\partial_Mf\|_2}{\|\partial_Mf\|_1}\right)^{\frac23}.
\label{eqn|DefinitionBMN}\end{align}
 Define the function $\psi_M$ as 
\begin{align} \psi_M(L) &=\frac{e^{\alpha L}}{B_M}+\frac1{L^k},
\label{eqn|definitionOfPsi}\end{align} we can re-write \eqref{eqn|VBoundL07Unfinished} as 
\begin{align} 
R_k(f)&\leq
\theta \sum_{M\subseteq W, |M|=k}\psi_M(L_M)\cdot\|\partial_Mf\|_2^2.
\label{eqn|VBoundL07}
\end{align} 
We now choose $L_M$, subject to $L_M\geq 2k$.
Observe that if $B_M>e^{6\alpha k}$, then $\left\lfloor\frac{\log B_M}{2\alpha}\right\rfloor\geq 2k$. Let $B_0$ be the real number such that $B>B_0$ implies $B^{\frac1{2k}}>\log B$. Let $\hat B=\max\{e^{6\alpha k},B_0\}$.
 
If we assume that $B_M>\hat B$, then we choose \[L_M=\left\lfloor\frac{\log B_M}{2\alpha}\right\rfloor.\]
This choice satisfies $L_M\geq 2k$, as required, and immediately implies $L_M>\frac1{3\alpha}\log B_M$, hence \begin{align}\frac{1}{L_M^k} &< \frac{\left(3\alpha\right)^k}{\log^kB_M}. \label{eqn|PsiBound1}\end{align}

The inequality $L_M<\frac1{2\alpha}\log B_M$ gives us 
\begin{align} \frac{e^{\alpha L_M}}{B_M}&<\frac{e^{\frac12\log B_M}}{B_M}=\frac1{\sqrt{B_M}}<\frac1{\log^kB_M}.
\label{eqn|PsiBound2}
\end{align}

Inequalities \eqref{eqn|PsiBound1} and \eqref{eqn|PsiBound2} imply that if $B_M>\hat B$, then
\begin{align}
\psi_M(L_M)&<\frac{1+(3\alpha)^k}{\log^kB_M}. \label{eqn|PsiBoundFinal}
\end{align}
If $B_M\leq \hat B$, then we are going to use a much simpler bound for $\Sigma(M)$ defined in \eqref{eqn|DefSigmaM}. The product of numbers $|S|$, $(|S|-1)$, $\dots$, $(|S|-k+1)$ in the denominator is at least as big as $k!$. This generous bound is sufficient to cancel $k!$ and we are left with $\Sigma(M)\leq (p(1-p))^k\|\partial_Mf\|_2^2$. 
However, since $B_M\leq \hat B$, we have 
\begin{align}
\Sigma(M)&\leq (p(1-p))^k\|\partial_Mf\|_2^2\cdot \frac{1+\log^k \hat B}{1+\log^kB_M}. \label{eqn|TrivialBoundForSmallBM}
\end{align}
Let us now use \eqref{eqn|DefinitionBMN} to replace $\log B_M$ with $\frac23\log \frac{\|\partial_Mf\|_2}{\|\partial_Mf\|_1}$. In the case $B_M\leq \hat B$ we apply \eqref{eqn|TrivialBoundForSmallBM}, while in the case $B_M>\hat B$ we apply \eqref{eqn|VBoundL07} and \eqref{eqn|PsiBoundFinal} to conclude that there exists a constant $C_k\in\mathbb R$ 
such that 
\begin{align*}
R_k(f)&\leq C_k\cdot \sum_{M\subseteq W,|M|=k}\frac{\|\partial_Mf\|_2^2}{1+\left(\log \frac{\|\partial_Mf\|_2}{\|\partial_Mf\|_1}\right)^k}.  
\end{align*}
This completes the proof of Theorem \ref{thm|BoundRemainder}. 
\end{proof}

\noindent{\em Remark.} The function $\psi_M(L)$ defined in \eqref{eqn|definitionOfPsi} cannot be bounded by something much better than $\log^{-k}B_M$, as was done in \eqref{eqn|PsiBoundFinal}. Basic analysis of $\psi_M$ shows that it is convex and increasing for positive $L$. Its minimum is attained at the solution of the equation $\psi'(L)=0$, which after the substitutions \[y=\frac{\alpha}{k+1}L\quad\text{ and }\quad x=\frac{\alpha}{k+1}\left(\frac{kB_M}{\alpha}\right)^{1/(k+1)}\] becomes 
$ye^y=x$. The function $x\mapsto y(x)$ is not an elementary function, but it is very easy to prove that it is increasing and 
\[\lim_{x\to+\infty}\frac{y(x)}{\log x}=1.\]

\subsection{First-passage percolation on torus}
We now turn to first-passage percolation time $f^{\tau}$ on torus.

\begin{proposition}\label{thm|BoundMVTorus} In the torus model, for every $i\in W_n$ we have 
\begin{align}
\mathbb P\left(A_i\right)&\leq \frac1{  p}\mathbb P(E_i)\leq\frac{b}{apn^{d-1}}.   \label{eqn|BoundMVTorus}
\end{align}
\end{proposition}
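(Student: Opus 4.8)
The plan is to split the claim into its two inequalities. The first, $\mathbb{P}(A_i)\le \frac1p\,\mathbb{P}(E_i)$, is exactly Theorem~\ref{thm|BoundMVGeneral}; its proof used only the tilting identity~\eqref{eqn|TiltingFormula} and the set identity~\eqref{eqn:AEinclusion}, both of which hold verbatim in the torus model (the paper already noted that the results of that section apply to $f^\tau$), so nothing new is needed there. The real content is the bound $\mathbb{P}(E_i)\le \frac{b}{a\,n^{d-1}}$ for every edge $i$, and the idea is a first‑moment count of \emph{essential} edges in a single environment: by Definition~\ref{def|Essential} every essential edge lies on every geodesic, hence on one fixed geodesic, so the number of essential edges in a given environment is at most the length of a geodesic — and geodesics on the torus are short.

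Concretely, I would first record the shortness of geodesics. Fix $\omega$ and let $\gamma$ be a geodesic (the defining minimum is attained, since the straight path visiting the vertices $(\ell,0,\dots,0)$, $\ell=0,1,\dots,n-1$, is admissible, has $n-1$ edges, and so $f^\tau(\omega)\le (n-1)b<\infty$). Then $f^\tau(\omega)=T(\gamma,\omega)=\sum_{e\in\gamma}\omega_e\ge a\,|\gamma|$, where $|\gamma|$ is the number of edges of $\gamma$; combining the two displays, $|\gamma|\le (n-1)b/a$. Since every edge that is essential on $\omega$ is an edge of $\gamma$, we get $\#\{\,e\in W_n:\omega\in E_e\,\}\le |\gamma|\le (n-1)b/a$, and taking expectations, $\sum_{e\in W_n}\mathbb{P}(E_e)\le (n-1)b/a$.

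Next I would invoke the symmetry of the torus model. The law of the environment and the collection of admissible (once‑wrapping) paths are invariant under every lattice translation of $\mathbb{Z}_n^d$ — this invariance is exactly why one works with $f^\tau$ rather than $f_n$ — so $e\mapsto\mathbb{P}(E_e)$ is constant on each of the $d$ translation orbits of edges, and each orbit has $n^d$ elements. Hence, for any edge $i$, grouping the sum above over the orbit of $i$ gives $n^d\,\mathbb{P}(E_i)\le\sum_{e\in W_n}\mathbb{P}(E_e)\le (n-1)b/a$, so $\mathbb{P}(E_i)\le \frac{(n-1)b}{a\,n^d}\le \frac{b}{a\,n^{d-1}}$. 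Dividing by $p$ and combining with Theorem~\ref{thm|BoundMVGeneral} yields the full chain $\mathbb{P}(A_i)\le\frac1p\,\mathbb{P}(E_i)\le\frac{b}{a\,p\,n^{d-1}}$.

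The delicate point is the translation invariance used in the last step: one must check that $f^\tau$, defined as the minimum over \emph{all} paths that wrap once around the torus (not merely those anchored in a fixed hyperplane), is genuinely invariant in distribution under shifts in the $x_1$ direction, so that the $E_e$ really do have equal probability within a direction class; without this, the global sum bound does not localize to a single edge. The remaining ingredients — existence of a length‑minimizing geodesic on the finite sample space, and the elementary inequalities $a\,|\gamma|\le T(\gamma,\omega)$ and $f^\tau(\omega)\le (n-1)b$ — are immediate. The combinatorial heart (essential edges are confined to a single, short geodesic) is soft; all of the quantitative slack, including the factor $b/a$, comes from bounding a geodesic's edge count by its passage time divided by $a$.
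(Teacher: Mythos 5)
Your proposal is correct and follows essentially the same route as the paper: the first inequality is quoted from Theorem~\ref{thm|BoundMVGeneral}, and the second comes from the first-moment count that all essential edges lie on a single geodesic of length at most $bn/a$, combined with translation symmetry to convert the bound on $\sum_{j\in W}\mathbb P(E_j)$ into a bound on $\mathbb P(E_i)$. Your grouping by translation orbits of size $n^d$ (rather than asserting that all $\mathbb P(E_j)$, across different edge directions, are equal) is in fact a slightly more careful rendering of the symmetry step the paper uses.
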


\begin{proof} The first inequality follows from \eqref{eqn|BoundMVGeneral}. 
Due to symmetry, the values $\mathbb P(E_j)$ are equal among edges $j$ that are parallel to the first coordinate vector. Also, the values $\mathbb P(E_j)$ are equal among edges $j$ that are orthogonal to the first coordinate vector. Neither of these values are $0$ because for every fixed edge $j$, we can easily construct an environment on which the edge $j$ is essential. Hence, $\mathbb P(E_j)$ could take two possible values. Let us call them $P_1$ and $P_2$ and assume that $P_1<P_2$. Our entire graph is $[-2n,2n]^d$. There is a total of $2^dn^d$ edges parallel to the first coordinate vector. There is a total of $(d-1)\cdot 2^d\cdot n^d$ edges orthogonal to the first coordinate vector. Hence, there are more than $n^d$ edges $j$ for which $\mathbb P(E_j)$ is the larger number $P_2$.
The total sum of all of $\mathbb P(E_j)$ is greater than $n^d\cdot P_2$. Hence, for every edge $i$, we must have 
\begin{align*}
\mathbb P(E_i)&< \frac1{n^d}\sum_{j\in W}\mathbb P(E_j)=\frac1{n^d}\mathbb E\left[\sum_{j\in W}1_{E_j}\right].
\end{align*}
The sum $\sum_{j\in W}1_{E_j}$ is bounded by $bn/a$ because all of the essential edges must be on one geodesic whose length is at most $\frac{bn}a$.
\end{proof}

\begin{theorem}\label{thm|PartialNonZero} For every integer $k\geq 1$, there exist a constant $\theta=\theta(k,p)$ and an integer $N_0$ such that for every $n\geq N_0$ and every $M\subseteq W$ with $|M|=k$, the first passage percolation time $f^{\tau}$ on torus satisfies
\begin{align}
\mathbb P\left(\partial_Mf^{\tau}\neq 0\right) &\leq \frac{\theta}{n^{d-1}}.
\label{eqn|PartialNonZero}
\end{align}
\end{theorem}
\begin{proof} We will omit the superscript $\tau$. However, the argument in this proof applies only to the first passage percolation on torus.  
The inequality is obvious if $\partial_Mf=0$ almost surely. Assume that $\partial_Mf(\omega)\neq 0$ for some $\omega$. 
There is an ordering $(m_1, \dots, m_k)$ of the set $M$ and a vector $\overrightarrow\alpha=(\alpha_2,\dots, \alpha_k)\in \{a,b\}^{k-1}$ such that 
 $\partial_{m_1} f(\sigma_{m_2}^{\alpha_2}\circ\cdots\circ \sigma_{m_k}^{\alpha_k}(\omega))\neq 0$. Let us denote $\overrightarrow v=(m_2,\dots, m_k)$. We will now sum over all possible elements $m_1$ and 
 all possible choices of $\overrightarrow v$ and $\overrightarrow \alpha$. 
\begin{align}
\nonumber 
\mathbb P\left(\partial_Mf\neq 0\right)&\leq \sum_{m_1,\overrightarrow v,\overrightarrow \alpha}
\mathbb P\left(\partial_{m_1}f\circ \sigma_{\overrightarrow v}^{\overrightarrow\alpha}\neq 0\right)\\ 
&= 
\sum_{m_1,\overrightarrow v,\overrightarrow \alpha}\sum_{\overrightarrow\beta\in\{a,b\}^{k-1}}
\mathbb P\left(\left\{\partial_{m_1}f\circ \sigma_{\overrightarrow v}^{\overrightarrow\alpha}\neq 0\right\}\cap I_{\overrightarrow v,\overrightarrow \beta}\right) . \label{eqn|ManySummations}
\end{align}
We now use \eqref{eqn|multipleFlips} to obtain 
\begin{align*}\mathbb P\left(\left\{\partial_{m_1}f\circ \sigma_{\overrightarrow v}^{\overrightarrow\alpha}\neq 0\right\}\cap I_{\overrightarrow v,\overrightarrow \beta}\right)
&\leq \frac{\mathbb P\left(\overrightarrow \beta\right)}{\mathbb P\left(\overrightarrow \alpha\right)} \mathbb P\left(A_{m_1}\right)\\
&\leq\left(\frac{\max\{p,1-p\}}{\min\{p,1-p\}}\right)^{k-1}\cdot\frac{b}{apn^{d-1}},
\end{align*}
where for the last inequality we used \eqref{eqn|BoundMVTorus}. The number of terms in the summations 
 \eqref{eqn|ManySummations} is really large and is exponential in $k$, because we are summing over all possibilities for $m_1$, $\overrightarrow v$, $\overrightarrow \alpha$, and $\overrightarrow \beta$. 
 However, the number of terms depends only on $k$. Therefore, the last inequality and \eqref{eqn|ManySummations} can be used to conclude that 
 there exists a scalar $\theta$ for which \eqref{eqn|PartialNonZero} is satisfied.
\end{proof}

\subsection{Bound on the lower sum $L_k$}\label{subs|ProofLk} 
\begin{proof}[Proof of Theorem \ref{thm|SumLowerFourierLevels}]
The bound is trivial for $k=1$. Let us assume $k\geq 2$. For every fixed $m\geq 1$, the number $\Sigma_m(f^{\tau})$ satisfies
\begin{align}
\Sigma_m(f^{\tau})&=\sum_i \sum_{S\ni i, |S|=m}\frac{\left(\mathbb E\left[\partial_Sf^{\tau}\right]\right)^2}{|S|} \nonumber\\
&\leq \sum_i \sum_{S\ni i, |S|=m} \left(\mathbb E\left[\partial_Sf^{\tau}\right]\right)^2
=\sum_i \sum_{T:i\not \in T, |T|=m-1} \left(\mathbb E\left[\partial_T\partial_i f^{\tau}\right]\right)^2,
\label{eqn|beforeBecknerBonami}
\end{align}
We will convert the inner summation on the right-hand side of \eqref{eqn|beforeBecknerBonami} to a sum of squared Fourier coefficients so that we can apply the Beckner-Bonami inequality. 
The identity \eqref{eqn|integrationByParts} applied with $\varphi=\partial_i f^{\tau}$ and $T\subseteq W\setminus\{i\}$ gives
\begin{align*}
\mathbb E\left[\partial_i f^{\tau}\cdot r_T\right]&=\sqrt{p(1-p)}^{|T|}\cdot \mathbb E\left[\partial_T\partial_i f^{\tau}\right],
\end{align*}
hence
\begin{align}
\left(\mathbb E\left[\partial_T\partial_i f^{\tau}\right]\right)^2 &=\frac{\left(\mathbb E\left[\partial_i f^{\tau}\cdot r_T\right]\right)^2}{(p(1-p))^{|T|}}.
\label{eqn|conversionToFourier}
\end{align}
We now use \eqref{eqn|BecknerBonami}. Let us re-state it in terms of a general random variable $\varphi$: If $a_T=\mathbb E[\varphi r_T]$, then for fixed $j$
\begin{align}
\sum_{|T|=j} a_T^2 &\leq (q-1)^j\theta^{2j}\cdot \|\varphi\|_{q'}^2, \label{eqn|BecknerBonamiVarphi}
\end{align}
where $\theta=\frac1{\sqrt{p(1-p)}}$ and $q'$ is the conjugate of $q$. Taking $\varphi=\partial_i f^{\tau}$ and applying 
\eqref{eqn|conversionToFourier} with $j$ replaced by $m-1$ implies 
\begin{align}
\sum_{T:i\not\in T, |T|=m-1} \left(\mathbb E\left[\partial_T\partial_i f^{\tau}\right]\right)^2
&=\frac{1}{(p(1-p))^{m-1}}\sum_{|T|=m-1} \left(\mathbb E\left[\partial_i f^{\tau}\cdot r_T\right]\right)^2.
\nonumber 
\end{align}
We now use the inequality \eqref{eqn|BecknerBonamiVarphi} to obtain
\begin{align}
\sum_{T:i\not\in T, |T|=m-1} \left(\mathbb E\left[\partial_T\partial_i f^{\tau}\right]\right)^2
&\leq \frac{(q-1)^{m-1}\theta^{2(m-1)}}{(p(1-p))^{m-1}}\cdot \|\partial_i f^{\tau}\|_{q'}^2.
\nonumber
\end{align}
Substituting $\theta=(p(1-p))^{-1/2}$ gives us
\begin{align}
\sum_{T:i\not\in T, |T|=m-1} \left(\mathbb E\left[\partial_T\partial_i f^{\tau}\right]\right)^2
&\leq\frac{(q-1)^{m-1}}{(p(1-p))^{2(m-1)}}\cdot \|\partial_i f^{\tau}\|_{q'}^2.
\label{eqn|innerSumBound}
\end{align}
Summing \eqref{eqn|innerSumBound} over $i$ and substituting into \eqref{eqn|beforeBecknerBonami},
\begin{align}
\Sigma_m(f^{\tau})&\leq \frac{(q-1)^{m-1}}{(p(1-p))^{2(m-1)}}\sum_i \|\partial_i f^{\tau}\|_{q'}^2 \nonumber \\
&=\frac{(q-1)^{m-1}}{(p(1-p))^{2(m-1)}}\sum_i\left(\mathbb E\left[|\partial_i f^{\tau}|^{q'}\right]\right)^{2/q'} \nonumber \\
&\leq \frac{(q-1)^{m-1}}{(p(1-p))^{2(m-1)}}\cdot (b-a)^2\sum_i\left(\mathbb P\left( \partial_i f^{\tau}\neq 0\right)\right)^{2/q'}.
\label{eqn|afterBecknerBonami}
\end{align}
We now apply \eqref{eqn|BoundMVTorus} to the right-hand side of \eqref{eqn|afterBecknerBonami} to obtain
\begin{align}
\Sigma_m(f^{\tau})&\leq
\frac{(q-1)^{m-1}}{(p(1-p))^{2(m-1)}}\cdot (b-a)^2\cdot \left(\frac{b}{apn^{d-1}}\right)^{2/q'}\sum_i 1\nonumber\\
&=
\frac{(q-1)^{m-1}}{(p(1-p))^{2(m-1)}}\cdot (b-a)^2\cdot \left(\frac{b}{apn^{d-1}}\right)^{2/q'}n^d.\label{eqn|almostFinal}
\end{align}
If we choose $q=\frac{2}{1-\zeta}$ and $q'=\frac{2}{1+\zeta}$, then $q-1=\frac{1+\zeta}{1-\zeta}$ and $2/q'=1+\zeta$, so \eqref{eqn|almostFinal} becomes 
\begin{align}
\Sigma_m(f^{\tau})&\leq 
\hat C(\zeta,m,p,a,b)\cdot n^{1+\zeta-\zeta d},\quad\text{ where }\label{eqn|BoundFourierLevelKTorus}\\
\hat C(\zeta,m,p,a,b)&=\left(\frac{1+\zeta}{(1-\zeta)(p(1-p))^2}\right)^{m-1}\cdot (b-a)^2\cdot \left(\frac{b}{ap }\right)^{1+\zeta}.\nonumber
\end{align}
Summing $(p(1-p))^m$ times \eqref{eqn|BoundFourierLevelKTorus} over $m\in\{1,2,\dots,k-1\}$ gives \eqref{eqn|SumLowerFourierLevels}.
\end{proof}

\subsection{Corollary \ref{thm|Consequence2} and related conjectures}\label{subs|CorollaryAndConjectures}
\begin{proof}[Proof of Corollary \ref{thm|Consequence2}] 
The denominator in \eqref{eqn|BoundRemainder} contains $\frac{\|\partial_Mf\|_2}{\|\partial_Mf\|_1}$. We find a lower bound for this component using Cauchy's inequality and \eqref{eqn|PartialNonZero}. 
\begin{align*}
\|\partial_Mf\|_1&=\|\partial_M f\cdot 1_{\partial_Mf\neq 0}\|_1\leq 
\|\partial_M f\|_2\cdot \sqrt{\mathbb P\left(\partial_Mf\neq 0\right)}\\
&\leq \|\partial_M f\|_2\cdot \frac{\sqrt {\theta}}{n^{\frac{d-1}2}}.
\end{align*}
Hence, $\log \frac{\|\partial_Mf\|_2}{\|\partial_Mf\|_1}\geq C_1 \log n$, for some constant $C_1$.

We can sharpen the bound \eqref{eqn|SumLowerFourierLevels} for $k=2$ in the following way  
\begin{align*}\sum_{S\subseteq W;  |S|=1}(p(1-p))^{|S|}\left(\mathbb E\left[\partial_Sf\right]\right)^2 &=
(p(1-p))\sum_{i\in W}\left(\mathbb E\left[\partial_if\right]\right)^2
\\&\leq (p(1-p))\sum_{i\in W}\left((b-a)\mathbb P(A_i)\right)^2.\end{align*}
The total number of edges in the graph is at most $d\nu n^d$ for some constant $\nu$, and \eqref{eqn|BoundMVTorus} gives $\mathbb P(A_i)\leq \frac{b}{apn^{d-1}}$. Hence
\begin{align*}
(p(1-p))\sum_{i\in W}\left((b-a)\mathbb P(A_i)\right)^2 &\leq p(1-p)(b-a)^2\cdot \left(\frac{b}{ap}\right)^2 \cdot\nu\cdot d\cdot n^{d}
\\
&\quad
\times  \left(\frac1{n^{d-1}}\right)^2\\
&=p(1-p)(b-a)^2\cdot\left(\frac{b}{ap}\right)^2\cdot\nu\cdot d\cdot \frac1{n^{d-2}}.
\end{align*}

In dimension $d\geq 2$ this last quantity is bounded by a constant $C'$ independent of $n$, which gives the additive constant in the first of the two bounds in \eqref{eqn|convN}. For the second bound, 
observe that $\|\partial_Mf\|_2^2$ satisfies
\begin{align*}\|\partial_Mf\|_2^2&=\mathbb E\left[|\partial_Mf|^2\right]=\mathbb E\left[|\partial_Mf|^2\cdot 1_{\{\partial_Mf\neq 0\}}\right]\\
&\leq2^{2|M|}(b-a)^2\mathbb P(\partial_M f\neq 0).
\end{align*} 
Therefore, the summation in \eqref{eqn|BoundRemainder}
can be bounded by $\frac{C}{(\log n)^2}\mathbb E\left[N_2\right]$.

 It remains to notice that the numerator of each fraction in the summation contains 
$\|\partial_Mf\|_2^2$ which can be bounded from above by $C_2 \mathbb P(\partial_Mf\neq 0)$. Therefore, the right-hand side of \eqref{eqn|BoundRemainder} is bounded by 
\begin{align*}\frac{C}{\log^2n}\sum_{|M|=2}\mathbb P\left(\partial_Mf\neq 0\right)&= \frac{C}{\log^2n}\mathbb E\left[\sum_{|M|=2}1_{\partial_Mf\neq 0}\right],
\end{align*}
and the last summation in the expected value is precisely the random variable $N_2$. 
\end{proof}

As mentioned earlier, we believe that the following conjectures are true, but we do not know how to prove them. 
\begin{conjecture}
In first passage percolation model, there exists a constant $C$ independent on $N$ such that 
\begin{align*}
\sum_{M\subseteq W_n, |M|=2} \|\partial_Mf\|_2^2&\leq C\cdot N.
\end{align*}
\end{conjecture}

\begin{conjecture}
In first passage percolation model, there exists a constant $C$ independent on $N$ such that 
\begin{align*}
\sum_{M\subseteq W_n, |M|=2} \|\partial_Mf^{\tau}\|_2^2&\leq C\cdot N.
\end{align*}
\end{conjecture}

\section{Relationship between influential and essential edges} \label{sec|relationshipIE}
The constructions in this section are carried out for the general first-passage percolation model $f$, with fixed endpoints $0$ and $nv$. Analogous constructions can be carried out on the torus.
We established in \eqref{eqn:EAinclusion} that $E_i\subseteq A_i$. The following proposition shows that the reverse inclusion does not hold.
\begin{proposition}\label{thm|NoOppositeInclusions}
There exists $n_0$ such that for all $n\geq n_0$ there exists an edge $i$ for which the following holds
\begin{align}
A_i \setminus E_i &\neq  \emptyset, \label{eqn|ReverseEAInclusion} \\
E_i\setminus \left(A_i\cap\left\{\omega_i=a\right\}\right)& \neq \emptyset.  \label{eqn|ReverseAEInclusion}
\end{align} 
\end{proposition}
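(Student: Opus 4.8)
The plan is to exhibit, for all sufficiently large $n$, a single environment (or a small family of environments) that witnesses both \eqref{eqn|ReverseEAInclusion} and \eqref{eqn|ReverseAEInclusion} for a suitable edge $i$. The natural candidates are the extreme environments already constructed in this section: the environments with valleys $\check\omega(m,k)$ and with overpasses $\bar\omega(m,k)$. In fact, for \eqref{eqn|ReverseEAInclusion} it suffices to produce an $\omega$ and an edge $i$ such that $\partial_i f(\omega)\neq 0$ (so $\omega\in A_i$) but some geodesic on $\omega$ avoids $i$ (so $\omega\notin E_i$). A two-path configuration does the job: take a path $\gamma_0$ through $i$ and a competing path $\gamma_1$ avoiding $i$, set up the passage times so that $\gamma_0$ and $\gamma_1$ have equal cost on $\sigma_i^b(\omega)$ while on $\sigma_i^a(\omega)$ the path $\gamma_0$ becomes strictly cheaper. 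Then $f(\sigma_i^b(\omega))=f(\sigma_i^a(\omega))+(b-a)$ if $i$ lies on every geodesic of $\sigma_i^b(\omega)$ — but we only need $\partial_i f(\omega)\neq 0$, which follows because flipping $i$ from $a$ to $b$ strictly raises the cost of the unique cheapest family. Concretely, I would reuse Proposition~\ref{thm|Straightforward} and Theorem~\ref{thm|AiDefinition}: it is enough that $\sigma_i^a(\omega)\in E_i$ while $\omega\notin E_i$, and the witness is $\omega=\sigma_i^b(\check\omega(m,k))$ for an appropriate $(m,k)$, or even simpler, a hand-built environment with exactly two equal-length competing geodesics where exactly one passes through $i$, with all other passage times set to $b$ to force these two paths to be the only candidates.

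For \eqref{eqn|ReverseAEInclusion} I would argue as follows. By \eqref{eqn:AEinclusion} we have $A_i\cap\{\omega_i=a\}=E_i\cap\{\omega_i=a\}$, so $E_i\setminus(A_i\cap\{\omega_i=a\})=E_i\cap\{\omega_i=b\}$ up to the part of $E_i$ already inside $\{\omega_i=a\}$; more precisely $E_i\setminus(A_i\cap\{\omega_i=a\}) = E_i\setminus(E_i\cap\{\omega_i=a\}) = E_i\cap\{\omega_i=b\}$. Thus the claim reduces to showing that $E_i\cap\{\omega_i=b\}\neq\emptyset$ for large $n$, i.e. there is an environment in which edge $i$ carries passage time $b$ yet still lies on every geodesic. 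This is intuitively clear and easy to realize: force a bottleneck so that every route between source and sink must traverse a region where $i$ is the unique crossing edge, then assign $b$ to $i$ and $b$ to every edge outside a thin corridor containing the rest of a single cheapest path. For large enough $n$ the corridor can be made long enough that no detour avoiding $i$ can compete. Alternatively, one can take any $\omega'\in E_i\cap\{\omega_i=a\}$ (which is nonempty, e.g. the all-$a$ environment restricted suitably, or $\sigma_i^a$ of any element of $A_i$) and apply $\sigma_i^b$; by Proposition~\ref{thm|MonotonicityOfGeodesic} the geodesics are preserved under $\sigma_i^a$, and running the monotonicity argument in reverse shows $\sigma_i^b(\omega')\in E_i$ as long as $i$ was essential — which it was by hypothesis on $\omega'$. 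This gives $\sigma_i^b(\omega')\in E_i\cap\{\omega_i=b\}$.

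The order of the argument would therefore be: (i) recall the identities $A_i\cap\{\omega_i=a\}=E_i\cap\{\omega_i=a\}$ and $A_i=(\sigma_i^a)^{-1}(E_i)$ from the previous section; (ii) reduce \eqref{eqn|ReverseAEInclusion} to the nonemptiness of $E_i\cap\{\omega_i=b\}$; (iii) construct the bottleneck/corridor environment explicitly, citing the spacing argument already used for $\check\omega(m,k)$ to guarantee that competing paths are too long once $n$ is large; (iv) for \eqref{eqn|ReverseEAInclusion}, build the two-competing-geodesic environment and verify via Proposition~\ref{thm|forComputer}(a) (contrapositive) that $\partial_i f\neq 0$ while noting that the path avoiding $i$ is still geodesic so $\omega\notin E_i$; (v) if desired, use the same environment for both parts by choosing it so that $\sigma_i^b$ of it lies in $E_i$ and the environment itself lies in $A_i\setminus E_i$.

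The main obstacle I expect is the verification step: showing that for the explicitly constructed environments the \emph{only} geodesics are the intended ones, so that the claimed geodesic structure (and hence $\partial_i f$ or essentiality of $i$) really holds. This is the same technical point that underlies Propositions~\ref{thm|envOverpass} and \ref{thm|environmentOmegamk}, and the paper handles it by spacing the special edges and paths far apart and assigning $b$ to everything else, so that any path deviating from the intended corridors must pick up too many $b$-edges. I would invoke that same mechanism — choosing $n\geq n_0$ large enough that the corridor length exceeds the penalty any alternative route would save — rather than re-deriving it, and relegate the bookkeeping to the Appendix in the style of the other extreme-environment computations.
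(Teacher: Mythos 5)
Your main line of argument is correct and is essentially the paper's own proof: for \eqref{eqn|ReverseAEInclusion} the paper also reduces (implicitly) to exhibiting an element of $E_i\cap\{\omega_i=b\}$ and builds exactly your corridor environment (a straight path with value $a$ except $b$ on $i$, and $b$ everywhere off the path), and for \eqref{eqn|ReverseEAInclusion} it builds the two competing equal-cost paths, balanced by placing one $b$-edge on each path, so that both are geodesics on $\omega$ while flipping the marked edge to $a$ makes its path strictly cheaper. One caution: your ``alternative'' argument for the second part --- taking $\omega'\in E_i\cap\{\omega_i=a\}$ and claiming $\sigma_i^b(\omega')\in E_i$ by ``running Proposition~\ref{thm|MonotonicityOfGeodesic} in reverse'' --- is false in general: if the geodesic through $i$ beats the best $i$-avoiding path by less than $b-a$ (e.g.\ a competing all-$a$ path two edges longer, with $2a<b-a$), then after setting $\omega_i=b$ the avoiding path becomes strictly cheaper and $i$ is not even semi-essential. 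Since that remark is not load-bearing (your corridor construction already settles the claim), the proposal stands, but that alternative should be deleted rather than cited.
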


\begin{proof}
Let us first construct an environment $\omega$ in $E_i\cap \{\omega_i=b\}$. This will be a sufficient example to prove (\ref{eqn|ReverseAEInclusion}). 
 \begin{figure}[t]
 \centering 
\begin{tikzpicture}
\draw (0,0) -- (10,0) -- (10,2) -- (0,2) -- cycle;
\draw[thick,dashed] (0,1) -- (4.5,1); 
\draw[thick,dashed] (5.5,1) -- (10,1);
\draw[very thick] (4.5,1) -- (5.5,1); 
\node[below] at (9,1) {$\gamma$};
\node[below] at (5,1) {$i$};
\draw[very thick] (4.5,0.9) -- (4.5,1.1);
\draw[very thick] (5.5,0.9) -- (5.5,1.1);
\end{tikzpicture}
\caption{Example of an environment that proves \eqref{eqn|ReverseAEInclusion}.}
 \label{fig|ESetminusAintOmegaia}
\end{figure}

Let us take the straight line $\gamma$ in the graph. Let us pick one edge on this line $\gamma$ and call it $i$. Set $\omega_k$ to be $b$ for $k=i$ and for $k$ outside $\gamma$. Set $\omega_l$ to be $a$ for every edge $l$ on the line $\gamma$ that is different from $i$. Then $\gamma$ is the only geodesic. It passes through $i$ although $\omega_i=b$.  

 \begin{figure}[t]
 \centering
\begin{tikzpicture}
\draw (0,-2) -- (10,-2) -- (10,2) -- (0,2) -- cycle;
\draw[thick,dashed] (0,0) -- (2,0);
\draw[thick,dashed] (8,0) -- (10,0);
\draw[thick,dashed] (2,0) -- (2,-1) -- (4.5,-1);
\draw[very thick] (4.5,-1) -- (5.5,-1);
\draw[very thick] (4.5,-1.1) -- (4.5,-0.9);
\draw[very thick] (5.5,-1.1) -- (5.5,-0.9);
\draw[thick,dashed] (5.5,-1) -- (8,-1) -- (8,0);
\draw[thick,dashed] (2,0) -- (2,1) -- (4.5,1);
\draw[very thick] (4.5,1) -- (5.5,1);
\draw[very thick] (4.5,0.9) -- (4.5,1.1);
\draw[very thick] (5.5,0.9) -- (5.5,1.1);
\draw[thick,dashed] (5.5,1) -- (8,1) -- (8,0);
\node[above] at (2.5,1.0) {$\gamma_1$};
\node[below] at (2.5,-1.0) {$\gamma_2$};
\node[above] at (7.5,1.0) {$\gamma_1$};
\node[below] at (7.5,-1.0) {$\gamma_2$};
\node[above] at (5,1) {$i_1$};
\node[below] at (5,-1) {$i_2$};
\end{tikzpicture}
\caption{Example of an environment that proves \eqref{eqn|ReverseEAInclusion}.}
 \label{fig|ASetminusE}
\end{figure}

We now construct an environment $\omega$ in $A_i\setminus E_i$.
Let us pick two paths $\gamma_1$ and $\gamma_2$ that have the same starting points and the same ending points. However, the paths $\gamma_1$ and $\gamma_2$ have sections that are reflections of each other, as shown in the Figure \ref{fig|ASetminusE}. We identify two edges $i_1\in\gamma_1\setminus \gamma_2$ and $i_2\in \gamma_2\setminus \gamma_1$ that are far away from each other. Consider the environment $\omega$ that has the value $b$ on the edges $i_1$ and $i_2$ and on every edge outside of $\gamma_1\cup \gamma_2$. The environment $\omega$ has the value $a$ on each edge from $\gamma_1\cup\gamma_2\setminus\{i_1,i_2\}$. 
The edges $i_1$ and $i_2$ are influential. However, neither of them is essential, because in the unchanged environment $\omega$, each of $\gamma_1$ and $\gamma_2$ is a geodesic.
\end{proof}

\begin{proposition}\label{thm|ABGeneral}
Assume that $a/(b-a)\not\in\mathbb N$.
Then, there exists an integer $n_0$ and an edge $j$ such that for all $n\geq n_0$, the following holds
\begin{align}
A_j\setminus \hat E_j&\neq \emptyset; \label{eqn|AJSetminusHatEJ}
\\
E_j\setminus \hat A_j&\neq \emptyset. \label{eqn|EJSetminusHatAJ}
\end{align}
\end{proposition}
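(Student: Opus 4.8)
The plan is to exhibit one explicit environment $\eta$ that witnesses \eqref{eqn|EJSetminusHatAJ} directly and whose $j$--flip $\sigma_j^b(\eta)$ witnesses \eqref{eqn|AJSetminusHatEJ}, in the spirit of the environments with valleys constructed above. Fix integers $k_a,k_b$ with $\epsilon:=ak_a+bk_b\in(0,b-a)$ (available by hypothesis). For $n$ large, choose two paths $\gamma_0$ and $\gamma_1$ from the source to the sink that share only their endpoints, lie far apart from each other and far from the boundary, with $j$ an edge of $\gamma_0$. Let $\eta$ assign the value $b$ to every edge outside $\gamma_0\cup\gamma_1$, the value $a$ to every edge of $\gamma_0$, and on $\gamma_1$ a mixture of $a$'s and $b$'s---together with a suitable change of the length of $\gamma_1$---so chosen that $T(\gamma_1,\eta)=T(\gamma_0,\eta)+\epsilon$. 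Note $\eta_j=a$.

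First I would pin down the geodesic structure of $\eta$. For $n$ large the two paths can be spaced so that any path other than $\gamma_0$ and $\gamma_1$ must traverse an edge of value $b$ lying outside $\gamma_0\cup\gamma_1$, hence is strictly more expensive than both, and any path following $\gamma_0$ but detouring around $j$ picks up at least three such $b$-edges, costing at least $T(\gamma_0,\eta)+(3b-a)>T(\gamma_0,\eta)+\epsilon$. Thus $f(\eta)=T(\gamma_0,\eta)$, the unique geodesic on $\eta$ is $\gamma_0$, it passes through $j$, and the cheapest path avoiding $j$ costs exactly $T(\gamma_0,\eta)+\epsilon$; in particular $\eta\in E_j$. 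Now set $\omega=\sigma_j^b(\eta)$: then $\omega_j=b$, $\omega$ agrees with $\eta$ off $j$, and $\sigma_j^a(\omega)=\eta$. Passing from $\eta$ to $\omega$ raises $T(\gamma_0,\cdot)$ by $b-a$ and leaves $T(\gamma_1,\cdot)$ unchanged, so since $\epsilon<b-a$ the path $\gamma_1$ becomes the unique geodesic on $\omega$; hence no geodesic on $\omega$ uses $j$, i.e. $\omega\notin\hat E_j$, while $\partial_jf(\omega)=f(\sigma_j^b(\omega))-f(\sigma_j^a(\omega))=T(\gamma_1,\eta)-T(\gamma_0,\eta)=\epsilon\neq0$, so $\omega\in A_j$. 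This proves \eqref{eqn|AJSetminusHatEJ}. For \eqref{eqn|EJSetminusHatAJ} I use $\eta$ itself: $\eta\in E_j$ was shown, and $\sigma_j^b(\eta)=\omega\notin\hat E_j$, so $\eta\notin(\sigma_j^b)^{-1}(\hat E_j)=\hat A_j$ by \eqref{eqn|HatAiDefinition}; hence $\eta\in E_j\setminus\hat A_j$.

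The step I expect to be the main obstacle is the preliminary one: realizing the cost gap exactly, i.e. producing a lattice path $\gamma_1$ that shares its endpoints with $\gamma_0$, embeds disjointly from $\gamma_0$ for every $n\ge n_0$, and satisfies $T(\gamma_1,\eta)-T(\gamma_0,\eta)=\epsilon=ak_a+bk_b$. Because $\gamma_0$ and $\gamma_1$ have common endpoints their lengths have the same parity, so one must choose the numbers of extra $a$-edges and $b$-edges on $\gamma_1$ together with a compensating even change of length; carrying out this bookkeeping so that the prescribed $\epsilon\in(0,b-a)$ is hit is exactly where the arithmetic hypothesis is used, and it is the technical heart of the argument. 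In the torus model there is extra room for this, since admissible paths may have different endpoints and, when $n$ is odd, the graph is not bipartite, which relaxes the parity constraint. The remaining points---that such a $\gamma_1$ genuinely fits in the graph for all large $n$ and that no unanticipated path undercuts both $\gamma_0$ and $\gamma_1$---are routine and are handled as in the Appendix calculations for the environments with valleys.
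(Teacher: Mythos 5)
Your construction is essentially the paper's: two far-apart source-to-sink paths, every remaining edge set to $b$, a passage-time gap lying in $(0,b-a)$, and a single flip of one edge. Your only departure is an economy: you use one environment $\eta$ together with $\omega=\sigma_j^b(\eta)$ to witness both \eqref{eqn|AJSetminusHatEJ} and \eqref{eqn|EJSetminusHatAJ} (the latter via \eqref{eqn|HatAiDefinition}), where the paper uses a single environment and two different edges, one on each path. Granting the existence of $\eta$ with the stated gap, your deductions that $\eta\in E_j$, that $\omega\in A_j\setminus\hat E_j$, and that $\eta\in E_j\setminus\hat A_j$ are correct.

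The genuine gap is exactly the step you yourself call the technical heart and then leave undone: producing $\gamma_0,\gamma_1$ with $T(\gamma_1,\eta)-T(\gamma_0,\eta)=\epsilon$. This is not routine bookkeeping, and as you state it it cannot always be carried out. In the box model the graph is bipartite, so two paths with common endpoints have lengths of equal parity, hence the difference of their passage times is $k_a'a+k_b'b$ with $k_a'+k_b'$ even; your additional choice that $\gamma_0$ carries only $a$-edges further forces $k_b'\geq 0$. The hypothesis guarantees neither constraint: when $a/b$ is irrational the representation $\epsilon=ak_a+bk_b$ is unique, so for instance $a=1$, $b=\sqrt 2$, $\epsilon=3-2\sqrt 2$ (odd coefficient sum) cannot be hit by any pair of paths, and a target whose unique representation has $k_b<0$ cannot be hit with an all-$a$ path $\gamma_0$ no matter how lengths are adjusted. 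The argument therefore has to aim at some value in $(0,b-a)$ with even coefficient sum, allowing $b$-edges on both paths as the paper does, and one must then show that such a value exists -- which the stated hypothesis alone does not give: for $a=1$, $b=3$ every even-sum combination is an even integer, so none lies in $(0,2)$ although $1=a$ does; by the same parity argument $\partial_jf$ then takes only the values $0$ and $b-a$, so in that case no environment whatsoever can witness \eqref{eqn|AJSetminusHatEJ} or \eqref{eqn|EJSetminusHatAJ}. The paper's own proof compresses all of this into the single unproved sentence that the choices ``can be made,'' so you are no less rigorous than the source; but a proposal that defers its own decisive step, and whose stated target (hitting the prescribed $\epsilon$) is unattainable in general, has a real gap precisely there.
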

\begin{proof}
Let us first prove \eqref{eqn|AJSetminusHatEJ}. Let us consider two paths $\gamma_1$ and $\gamma_2$ that have the same starting points and the same ending points, but that contain sections that are sufficiently far away from each other. The passage times are set to $b$ for all edges outside of $\gamma_1$ and $\gamma_2$.
There exist integers $k_a$ and $k_b$ for which 
$ak_a+bk_b\in (0,b-a)$. We can make such choices for passage times on disjoint sections of $\gamma_1$ and $\gamma_2$ such that the difference $T(\gamma_1,\omega)-T(\gamma_2,\omega)$ belongs to the open interval $(0,b-a)$. 

Then, let us identify an edge $j$ on the section of $\gamma_1$ far away from $\gamma_2$ that satisfies $\omega_j=b$. The path $\gamma_2$ is the only geodesic on $\omega$ and the path $\gamma_1$ is the only geodesic on $\sigma_j^a(\omega)$. The edge $j$ is not semi-essential on $\omega$, however it is influential. Hence, $\omega\in A_j\setminus \hat E_j$. 

The proof for \eqref{eqn|EJSetminusHatAJ} is similar. We can take the same construction that we used in the proof of \eqref{eqn|AJSetminusHatEJ}. This time, we identify an edge $j'$ on the section $\gamma_2$ that is far away from $\gamma_1$ and that satisfies $\omega_{j'}=a$. The path $\gamma_2$ is the only geodesic on $\omega$ and the path $\gamma_1$ is the only geodesic on $\sigma_{j'}^b(\omega)$. Therefore, the edge $j'$ is essential on $\omega$. However, the edge is not very influential, because $\partial_{j'}f(\omega)$ is strictly smaller than $b-a$.  
\end{proof}

For a set $V$ of edges, we defined $E_V$ as $\cap_{j\in V} E_j$. Therefore, it makes sense to generalize the concept of essential edge and talk about essential sets of edges. Unfortunately, if we define $A_V=\{\partial_{V}f\neq 0\}$, the fundamental inclusion $E_j\subseteq A_j$ does not generalize to sets with more than one element. Let us consider the case $V=\{v_1,v_2\}$.
The following two propositions imply that $E_V\not\subseteq A_V$ and that $A_V\not\subseteq A_{v_1}\cup A_{v_2}$.

\begin{proposition}\label{thm|entanglementDoesNotImplyInfluentiall}
For sufficiently large $n$, there are edges $v_1$ and $v_2$ for which the following holds
\begin{align}
\left\{\partial_{v_1}\partial_{v_2}f\neq 0\right\}\setminus\left(A_{v_1}\cup A_{v_2}\right) &\neq \emptyset.
\label{eqn|entanglementDoesNotImplyInfluentiall}
\end{align}
\end{proposition}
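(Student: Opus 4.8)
The plan is to recycle the two-path configuration from the proof of Proposition~\ref{thm|NoOppositeInclusions}, but to examine the \emph{second}-order derivative and to choose the base environment so that both distinguished edges carry the value $a$. Concretely: for large $n$ fix two source-to-sink paths $\gamma_1,\gamma_2$ of minimal graph length $\ell$ that meet only at their endpoints --- for instance the two ``reflected'' L-shaped monotone lattice paths drawn in the figure accompanying Proposition~\ref{thm|NoOppositeInclusions}, which is where $d\geq2$ enters --- and pick edges $v_1\in\gamma_1\setminus\gamma_2$, $v_2\in\gamma_2\setminus\gamma_1$. Let $\omega$ be the environment assigning $a$ to every edge of $\gamma_1\cup\gamma_2$ and $b$ to every other edge. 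The first step is a geometric lemma: for $n$ large, in each of the four environments $\sigma_{v_1}^{\theta_1}\circ\sigma_{v_2}^{\theta_2}(\omega)$ with $\theta_1,\theta_2\in\{a,b\}$, the only possible geodesics are $\gamma_1$ and $\gamma_2$. The reason is that any competing path either has length $>\ell$, or uses at least one edge outside $\gamma_1\cup\gamma_2$; the latter edges all have passage time $b$, so --- $\ell$ being minimal --- such a path costs at least $(\ell-1)a+b>\ell a$, while a length-$\ell$ path contained in $\gamma_1\cup\gamma_2$ is forced to be $\gamma_1$ or $\gamma_2$, since those two paths share no interior vertex.

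Granting the lemma, the passage times are immediate: the path $\gamma_i$ costs $(\ell-1)a+\theta_i$ in $\sigma_{v_1}^{\theta_1}\circ\sigma_{v_2}^{\theta_2}(\omega)$, hence
\[
f\bigl(\sigma_{v_1}^{\theta_1}\circ\sigma_{v_2}^{\theta_2}(\omega)\bigr)=(\ell-1)a+\min\{\theta_1,\theta_2\}=
\begin{cases}\ell a+(b-a),&\theta_1=\theta_2=b,\\ \ell a,&\text{otherwise.}\end{cases}
\]
Plugging this into the Leibniz-type identity \eqref{eqn|LeibnitzRule} with $S=\{v_1,v_2\}$ gives $\partial_{v_1}\partial_{v_2}f(\omega)=\bigl(\ell a+(b-a)\bigr)-\ell a-\ell a+\ell a=b-a\neq0$, so $\omega\in\{\partial_{v_1}\partial_{v_2}f\neq0\}$. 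On the other hand, because $\omega$ assigns $a$ to $v_2$, the same table yields $\partial_{v_1}f(\omega)=f(\sigma_{v_1}^b(\omega))-f(\sigma_{v_1}^a(\omega))=\ell a-\ell a=0$, so $\omega\notin A_{v_1}$, and by the symmetric computation $\partial_{v_2}f(\omega)=0$, so $\omega\notin A_{v_2}$. Hence $\omega$ witnesses \eqref{eqn|entanglementDoesNotImplyInfluentiall}.

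I expect the only genuine obstacle to be the geometric lemma of the first step --- certifying that, for all large $n$, no path of the ambient graph undercuts $\gamma_1$ and $\gamma_2$ in any of the four relevant environments. This is the same ``space the paths far enough apart'' bookkeeping that recurs throughout Section~\ref{sec|SpecialEnvironments} and is already implicit in the proof of Proposition~\ref{thm|NoOppositeInclusions}; it follows from the crude cost inequality above together with the observation that a minimal-length path contained in $\gamma_1\cup\gamma_2$ must be $\gamma_1$ or $\gamma_2$. The borderline case --- a length-$\ell$ path using exactly one edge outside $\gamma_1\cup\gamma_2$, or exactly one of $v_1,v_2$ when it is set to $b$ --- is handled by the same no-shared-interior-vertex remark.
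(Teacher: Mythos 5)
Your core computation is right and your strategy is the same one the paper uses --- exhibit one explicit environment, evaluate $f$ on the four configurations $\sigma_{v_1}^{\theta_1}\circ\sigma_{v_2}^{\theta_2}(\omega)$, and read off that the second derivative is $\pm(b-a)$ while both first derivatives vanish --- but your configuration differs from the paper's. The paper starts from the overpass environment $\bar\omega(2,1)$ of Proposition \ref{thm|envOverpass}: both $v_1,v_2$ sit on the same path $\gamma_0$, the witness sets $\omega_{v_1}=\omega_{v_2}=b$, a single flip to $a$ only creates a tie with $\gamma_1$ (so neither edge is influential), and $\partial_{v_1}\partial_{v_2}f=-(b-a)$ by \eqref{eqn|envOverpass}. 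You instead place $v_1,v_2$ on two vertex-disjoint equal-length paths with base values $a$, getting $+(b-a)$; the single flips are killed because the untouched all-$a$ path caps $f$ at $\ell a$. When your configuration exists, your verification that no other path undercuts $\gamma_1,\gamma_2$ is genuinely cleaner than the paper's (which leaves the analogous "only possible geodesics" fact implicit in the overpass computation), since minimality gives "every path has at least $\ell$ edges" and disjoint interiors force any all-$a$ length-$\ell$ path to be $\gamma_1$ or $\gamma_2$.

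The one genuine caveat is the hypothesis you lean on: two source-to-sink paths of \emph{minimal} graph length meeting only at their endpoints need not exist for the model's fixed endpoints. If $nv$ lies on a coordinate axis (the standard choice, and the geometry drawn in the figures of Section \ref{sec|SpecialEnvironments}), the $L^1$-geodesic from $0$ to $nv$ is the unique straight segment, so your configuration cannot be instantiated; and the "reflected L-shaped paths" you cite from the figure of Proposition \ref{thm|NoOppositeInclusions} are \emph{not} of minimal length, so your crude cost inequality (every competitor has $\geq\ell$ edges) does not apply to them verbatim --- a shorter all-$b$ competitor such as the straight line must be ruled out by the separate comparison $\ell_0 b>\ell a$ (true for large $n$), and mixed inside/outside competitors need a short extra check. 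This is a fixable bookkeeping issue, not a conceptual one: either restrict to endpoints admitting two disjoint minimal paths (e.g.\ a diagonal direction, or two parallel wrap-around lines in the torus model), or use two equal-length near-minimal "bump" paths and redo the comparison for large $n$, or simply fall back on the paper's overpass configuration, which only needs two equal-length paths and always exists. As written, though, your lemma's hypothesis and your illustrating example are inconsistent with each other, so you should state and verify the configuration you actually use.
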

\begin{proof}
 \begin{figure}[t]
 \centering
\begin{tikzpicture} 
\draw (0,-2) -- (10,-2) -- (10,2) -- (0,2) -- cycle;
\draw[thick,dashed] (0,0) -- (1,0);
\draw[thick,dashed] (9,0) -- (10,0);
\draw[thick,dashed] (1,0) -- (1,-1) -- (9,-1) -- (9,0);
\draw[very thick] (3,-1) -- (3.8,-1);
\draw[very thick] (3.8,-1) -- (4.6,-1); 
\draw[very thick] (3,-1.1) -- (3,-0.9); 
\draw[very thick] (3.8,-1.1) -- (3.8,-0.9);
\draw[very thick] (4.6,-1.1) -- (4.6,-0.9);  
\draw[very thick] (3,1.1) -- (3,0.9); 
\draw[very thick] (3.8,1.1) -- (3.8,0.9);
\draw[very thick] (4.6,1.1) -- (4.6,0.9);  

\draw[thick,dashed] (1,0) -- (1,1) -- (3.8,1);
\draw[very thick]  (3.8,1) -- (4.6,1);

\draw[thick,dashed] (4.6,1) -- (9,1)--(9,0); 

\node[above] at (1.5,1.0) {$\gamma_1$};
\node[below] at (1.5,-1.0) {$\gamma_0$};
\node[above] at (8.5,1.0) {$\gamma_1$};
\node[below] at (8.5,-1.0) {$\gamma_0$};
\node[above] at (3.3,1.1) {$w_1$}; 
\node[above] at (4.2,1.1) {$w_2$};
\node[below] at (3.4,-1) {$v_1$};
\node[below] at (4.2,-1) {$v_2$}; 
\end{tikzpicture}
\caption{Example of an environment that proves \eqref{eqn|entanglementDoesNotImplyInfluentiall}.}
\label{fig|entanglementDoesNotImplyInfluentiall}
\end{figure}

Let us consider two paths $\gamma_0$ and $\gamma_1$ from source to sink that have equal number of edges and that have two sections sufficiently far apart, that are reflections of each other, as shown in the Figure \ref{fig|entanglementDoesNotImplyInfluentiall}. 

Let us identify two edges $v_1$ and $v_2$ on $\gamma_0$ and their reflections $w_1$ and $w_2$ on $\gamma_1$. The environment $\omega$ assigns $b$ to every edge outside of $\gamma_0\cup \gamma_1$ and to the edges $v_1$, $v_2$, and $w_2$. The value $a$ is assigned to $\gamma_0\cup\gamma_1\setminus\{v_1,v_2,w_2\}$. 

Neither $v_1$ nor $v_2$ is influential, because turning either $\omega_{v_1}$ or $\omega_{v_2}$ from $b$ to $a$ would result in both paths $\gamma_0$ and $\gamma_1$ being the geodesics.   

However, $\partial_{v_1}\partial_{v_2}f(\omega)=-(b-a)<0$. Hence, 
$\omega\in \{\partial_{v_1}\partial_{v_2}f <0\}\subseteq \{\partial_{v_1}\partial_{v_2}f\neq 0\}$. 
\end{proof}

\begin{proposition}
For sufficiently large $n$, there exists edges $v_1$ and $v_2$ such that 
\begin{align}
\left(E_{v_1}\cap E_{v_2}\right) \setminus \left\{\partial_{v_1}\partial_{v_2}f\neq 0\right\} &\neq \emptyset. \label{eqn|EkElNotSubsetOfAkl}
\end{align}
\end{proposition}

\begin{proof}
Let us consider a straight line $\gamma$ and let us identify two edges $v_1$ and $v_2$ on the line $\gamma$ (see Figure \ref{fig|EkElNotSubsetOfAkl}).
 We will set the environment $\omega$ to satisfy $\omega_k=b$ for every $k\not \in \gamma$; $\omega_{v_1}=\omega_{v_2}=b$; and $\omega_k=a$ for $k\in \gamma\setminus\{v_1,v_2\}$.
 \begin{figure}[t]
 \centering
\begin{tikzpicture}
\draw (0,0) -- (10,0) -- (10,2) -- (0,2) -- cycle;
\draw[thick,dashed] (0,1) -- (4,1); 
\draw[thick,dashed] (6,1) -- (10,1);
\draw[very thick] (4,1) -- (6,1); 
\node[below] at (9,1) {$\gamma$};
\node[below] at (4.5,1) {$v_1$};
\node[below] at (5.5,1) {$v_2$};
\draw[very thick] (4,0.9) -- (4,1.1);
\draw[very thick] (5,0.9) -- (5,1.1);
\draw[very thick] (6,0.9) -- (6,1.1);
\end{tikzpicture}
\caption{Example of an environment that proves \eqref{eqn|EkElNotSubsetOfAkl}.}
 \label{fig|EkElNotSubsetOfAkl}
\end{figure}

The line $\gamma$ is the geodesic for sufficiently large $n$. Each of the edges $v_1$ and $v_2$ is essential, hence $\omega\in E_{v_1}\cap E_{v_2}$. 
Let $L$ be the number of edges on the line $\gamma$. The values of the function $f$ at the environments 
$\sigma_{v_1}^{\alpha_1}\circ\sigma_{v_2}^{\alpha_2}$ for $(\alpha_1,\alpha_2)\in\{a,b\}^2$ are \begin{align*} f\left(\sigma_{v_1}^b\circ \sigma_{v_2}^b(\omega)\right) &= (L-2)a+2b;\\
f\left(\sigma_{v_1}^a\circ \sigma_{v_2}^b(\omega)\right) &= (L-1)a+b;\\
f\left(\sigma_{v_1}^b\circ \sigma_{v_2}^a(\omega)\right) &= (L-1)a+b;\\
f\left(\sigma_{v_1}^a\circ \sigma_{v_2}^a(\omega)\right) &= La.
\end{align*}
Therefore, the value of $\partial_{\{v_1,v_2\}}f(\omega)$ is $0$ and $\omega\not\in \{\partial_{v_1}\partial_{v_2}f\neq 0\}$.
\end{proof}

\section{Theorems and conjectures about $L^2$ bounds} \label{sec|L2Bounds}
One possible starting point for establishing $L^2$ bounds for the higher order derivatives is  the formula \eqref{eqn|varianceFormula}. This formula can be used together with an obvious $O(n)$ bound for the variance. 
For example, if we fix $k\geq 2$, we can use 
\eqref{eqn|varianceFormula} to obtain 
\begin{align}Cn&\geq \text{var}(f)\geq \sum_{|S|=k}\left(p(1-p)\right)^k \left(\mathbb E\left[\partial_Sf\right]\right)^2,
\label{eqn|BoundDown01}
\end{align}
for some constant $C$. 
There is an order of $O(n^{kd})$ subsets of cardinality $k$. Many of them will have the environment derivatives equal to $0$: for example those that have elements very far away that would guarantee that no geodesic can go through all of them. However, 
even if we request that elements of $S$ are close to each other, the number of such sets $S$ is of order $O(n^{kd})$. 
Let $\mathcal F_k\subseteq\{T:|T|=k\}$. The elements of $\mathcal F_k$ have cardinalities $k$. Assume that $\mathcal F_k$ is large and that it has $O(n^{kd})$ elements. We can further restrict the summation in \eqref{eqn|BoundDown01} to the family $\mathcal F_k$ and obtain 
\begin{align}Cn&\geq  \sum_{S\in \mathcal F_k}\left(p(1-p)\right)^k \left(\mathbb E\left[\partial_Sf\right]\right)^2 \nonumber \\
&\geq C'\cdot n^{kd}\cdot\min_{S\in\mathcal F_k}\left( \mathbb E\left[\partial_Sf\right]\right)^2.
\label{eqn|BoundDown02}
\end{align}
The inequality \eqref{eqn|BoundDown02} now implies the following result.
\begin{proposition}\label{thm|EasyBound} If $\mathcal F_k\subseteq\{T:|T|=k\}$ and if the number of elements in the family $\mathcal F_k$ is larger than or equal to $C\cdot n^{kd}$, for some fixed constant $C$, then there exists a constant $D$ such that 
\begin{align*}
\min_{S\in\mathcal F_k}\mathbb E\left[\partial_Sf\right] &\leq D\cdot\frac1{n^{\frac{kd-1}2}}.
\end{align*}
 \end{proposition}

 \begin{conjecture}\label{conj|Equivalence} Assume that $k\geq 2$ is fixed. If $S$ is a set of edges cardinality $k$ such that $\mathbb E[\partial_Sf]\neq 0$. Then there exists a family $\mathcal F$ of $k$-element subsets of edges that contains $S$ and a constant $C>1$ such that $|\mathcal F|\geq Cn^{dk}$ and 
\[\left(\mathbb E\left[\partial_{S'}f\right]/\mathbb E\left[\partial_Sf\right]\right)^2\in\left(\frac1C,C\right),\] for every $S'\in \mathcal  F$.
\end{conjecture}
We have seen that higher order derivatives can be negative. However, constructing environments with negative derivatives is harder than constructing environments with positive derivatives. So, it is natural to conjecture that $\mathbb E[\partial_Sf]\geq 0$. Unfortunately, this is not true for some sufficiently strange sets $S$ as the following propositions shows.

\begin{proposition}\label{thm|Parasite}
It is possible to choose the parameters $a$ and $b$ and the set $S$ of cardinality $2$ such that $\mathbb E[\partial_Sf]<0$.
\end{proposition}
\begin{proof}
Assume that $a$ and $b$ satisfy $b<3a$. Let $i$ be the edge that connects the starting point $(0,0,\dots, 0)$ with $(1,0,\dots, 0)$. Let $j$ be 
the edge that connects $(1,0,\dots, 0)$ with $(2,0,\dots, 0)$. For this choice of $i$ and $j$, we will prove that 
\begin{align}
&\left(\forall \omega\in\Omega\right) \quad \partial_{i,j}f(\omega)\leq 0 \quad\text{and} 
\label{eqn|parasiteDirection01}\\
&\left(\exists\hat\omega\in\Omega\right) \quad \partial_{i,j}f(\hat\omega)<0. \label{eqn|parasiteDirection02}\end{align} 
We will first prove 
\eqref{eqn|parasiteDirection01}. 
We will first establish the following implication:
\begin{align} 
\sigma_{i,j}^{(b,a)}(\omega)\not\in \hat E_i^C\cap E_j&\Longrightarrow  \partial_{i,j} f\leq 0.\label{eqn|LemmaParasite02}
\end{align}
The implication holds for every choice of $i$ and $j$. 
If $\sigma_{i,j}^{(b,a)}(\omega)\in E_j^C$, then Proposition \ref{thm|forComputer} implies that $f(\sigma_{i,j}^{(b,b)}(\omega))=f (\sigma_{i,j}^{(b,a)}(\omega) )$, hence 
\begin{align*}
\partial_{i,j}f(\omega)&=-\left(f(\sigma_{i,j}^{a,b}(\omega))-f(\sigma_{i,j}^{a,a}(\omega))\right)\leq 0.
\end{align*}
Assume, now, that  $\sigma_{i,j}^{(b,a)}(\omega)\in \hat E_i$. The proposition \ref{thm|forComputer} implies \[f(\sigma_{i,j}^{(b,a)}(\omega))=f (\sigma_{i,j}^{(a,a)}(\omega) )+(b-a),\] hence 
\begin{align*}
\partial_{i,j}f(\omega)&= \left(f(\sigma_{i,j}^{b,b}(\omega))-f(\sigma_{i,j}^{a,b}(\omega))\right)-(b-a)\leq 0.
\end{align*}
This completes the proof of the implication \eqref{eqn|LemmaParasite02}.  
Let $\omega\in\Omega$. We may assume that  
$\sigma_{i,j}^{(b,a)}(\omega)\in \hat E_i^C\cap E_j$, as otherwise \eqref{eqn|LemmaParasite02} would immediately imply \eqref{eqn|parasiteDirection01}.  
Let $\gamma$ be any geodesic on $\sigma_{i,j}^{(b,a)}(\omega)$. 
It must go through $j$ and omit $i$. This geodesic starts at the origin and must 
reach the point $(1,0,\dots, 0)$ so it can go through $j$. It must go through at least $3$ edges. If they all have values $a$, the passage time is at least $3a$. However, if we change this section of the path and replace it with the edge $i$, then the passage time will become $b$, instead of at least $3a$. We assumed that $b<3a$. This contradicts the fact that $\gamma$ is a geodesic. 

We now need to prove \eqref{eqn|parasiteDirection02}--that there is an environment $\hat \omega$ such that $\partial_{i,j}f(\hat \omega)<0$. 
 \begin{figure}[t]
 \centering
\begin{tikzpicture}
\draw[thick,dotted] (1,3) -- (3,3);
\draw[thick] (3,3) -- (9,3);
\draw[dashed] (1,3) -- (1,5) -- (2,5);
\draw[thick] (2,5)--(3,5);
\draw[dashed] (3,5) -- (9,5) -- (9,1) -- (3,1) -- (3,3); 
\node[above] at (2.5,5) {$\xi$};
\node[below] at (1.5,3) {$i$};
\node[below] at (2.5,3) {$j$};
\node at (1,3) {$\bullet$}; 
\node at (2,3) {$\bullet$}; 
\node at (3,3) {$\bullet$};
\node at (9,3) {$\bullet$}; 
\node at (1,5) {$\bullet$}; 
\node at (2,5) {$\bullet$}; 
\node at (3,5) {$\bullet$}; 
\node at (9,5) {$\bullet$}; 
\node at (9,1) {$\bullet$}; 
\node at (3,1) {$\bullet$};
\node[left] at (1,3) {$0$};
\node[right] at (9,3) {$V$};
\node[left] at (1,5) {$L_1$};
\node[right] at (9,5) {$R_1$};
\node[left] at (3,1) {${L_2}$};
\node[right] at (9,1) {${R_2}$};
\node[below] at (5.5,5.0) {$\gamma_1$};
\node[above] at (5.5,1.0) {$\gamma_2$};
\end{tikzpicture}
\caption{Example of an environment that proves \eqref{eqn|parasiteDirection02}.}
 \label{fig|parasiteDirection02}
\end{figure}

Let $V$ denote the terminal point with coordinates $(n,0,\dots, 0)$.  
Let $\gamma_1$ be the path from the origin to $V$ that consists of the following three line segments:
\begin{itemize}
\item The first segment connects the origin with the point $L_1$ whose coordinates are $(0$, $10$, $0$, $\dots$, $0)$;
\item The second segment connects the point $L_1$ with the point $R_1$ with coordinates $(n$, $10$, $0$, $\dots$, $0)$;
\item The third segment connects $R_1$ with the terminal point $V$. 
\end{itemize} 
Let $\gamma_2$ be the path from the origin to $V$ that consists of the following four segments:
\begin{itemize} 
\item The first segment consists of the edges $i$ and $j$;
\item The second segment connects the right endpoint of $j$ with the point $L_2$ 
whose coordinates are $(2,-10,0,\dots, 0)$;
\item The third segment connects the point $L_2$ with the point $R_2$ whose coordinates are $(n,-10,0,\dots, 0)$;
\item The fourth segment connects the point $R_2$ with the point $V$.
\end{itemize}
The environment $\hat\omega$ assigns $b$ to every edge outside of $\gamma_1\cup \gamma_2$. 
Let us identify one edge $\xi$ on the section of the path $\gamma_1$ between $L_1$ and $R_1$ and assign the value $b$ to the edge $\xi$. Every edge on $\gamma_1\setminus\{\xi\}$ and every edge on $\gamma_2\setminus\{i,j\}$ is assigned the value $a$.

The passage time over $\gamma_1$ is 
\[T(\gamma_1,\hat\omega)=(20+n-1)a+b=na+19a+b.\]

The passage time over the straight line that connects the origin with $V$ is larger than $na+19a+b$ for sufficiently large $n$. Indeed, regardless of the choice of 
$\overrightarrow\alpha\in\{a,b\}^2$, 
the passage time over that straight line is greater than or equal to $2a+(n-2)b$ on each of $\sigma_{i,j}^{\overrightarrow \alpha}(\hat\omega)$. The number $2a+(n-2)b$ is larger than $na+19a+b$ whenever \[n>\frac{17a+3b}{b-a}.\]
Let us now evaluate the passage times over $\gamma_2$. 
\begin{align*}
T\left(\gamma_2,\sigma^{(a,a)}_{i,j}(\hat\omega)\right) &= na+20a;\\
T\left(\gamma_2,\sigma^{(a,b)}_{i,j}(\hat\omega)\right) &= na+19a+b;\\
T\left(\gamma_2,\sigma^{(b,a)}_{i,j}(\hat\omega)\right) &= na+19a+b;\\
T\left(\gamma_2,\sigma^{(b,b)}_{i,j}(\hat\omega)\right) &= na+18a+2b.
\end{align*}
On each of the environments $\sigma_{i,j}^{\overrightarrow \alpha}(\hat\omega)$, the smallest passage time is the minimum of the passage times over $\gamma_1$ and $\gamma_2$. Any path that uses an edge outside $\gamma_1 \cup \gamma_2$ pays $b$ on that edge; the straight line from the origin to $V$ has been ruled out above, and any other path that leaves $\gamma_1 \cup \gamma_2$ has length at least $n$ and uses at least as many $b$-valued edges as the straight line, so it is at least as long.
 On the environment $\sigma^{(a,a)}_{i,j}(\hat\omega)$ the minimum is over the path $\gamma_2$ and the value of the first-passage percolation $f$ is $na+20a$. On all other environments, the value of $f$ is $na+19a+b$. Therefore, 
\begin{align*}
\partial_{i,j}f(\hat\omega) &= na+19a+b-2(na+19a+b)+na+20a=-(b -a).
\end{align*}
Thus, $\partial_{i,j}f(\hat \omega)$ is strictly smaller than $0$. 
Since the probability space is finite, we are allowed to conclude that $\mathbb E[\partial_{i,j}f]<0$. 
\end{proof}

It is also possible to find sets $S$ for which $\partial_Sf$ have positive expected values. 

\begin{proposition}\label{thm|NonParasite}
It is possible to choose the parameters $a$ and $b$ and the set $S$ of cardinality $2$ such that $\mathbb E[\partial_Sf]>0$.
\end{proposition}
\begin{proof}
Assume that $a$ and $b$ satisfy $b<3a$. Let $i$ be the edge that connects the starting point $(0,0,\dots, 0)$ with $(0,1,\dots, 0)$. Let $j$ be 
the edge that connects the starting point with $(0,-1,\dots, 0)$. For this choice of $i$ and $j$, we will prove that 
\begin{align}
&\left(\forall \omega\in\Omega\right) \quad \partial_{i,j}f(\omega)\geq 0 \quad\text{and} \label{eqn|nonParasiteDirection01}\\
&\left(\exists\hat\omega\in\Omega\right) \quad \partial_{i,j}f(\hat\omega)>0. \label{eqn|nonParasiteDirection02}\end{align} 
We will first prove 
\eqref{eqn|nonParasiteDirection01}. 
We will first establish the following implication:
\begin{align} 
\sigma_{i,j}^{(a,a)}(\omega)\not\in E_i\cap  E_j&\Longrightarrow  \partial_{i,j} f\geq 0. \label{eqn|LemmaNonParasite01}
\end{align}
If we assume that $\sigma_{i,j}^{(a,a)}(\omega)\in E_i^C$, then the proposition 
\ref{thm|forComputer} implies  
\[f(\sigma_{i,j}^{(b,a)}(\omega))=f(\sigma_{i,j}^{(a,a)}(\omega)).\] Therefore, 
\begin{align*}
\partial_{i,j}f(\omega)&= f(\sigma_{i,j}^{(b,b)}(\omega))-f(\sigma_{i,j}^{(a,b)}(\omega))\geq 0.
\end{align*}
In an analogous way we prove that $\sigma_{i,j}^{(a,a)}(\omega)\in  E_j^C$ implies $\partial_{i,j}f(\omega)\geq 0$. This completes the proof of \eqref{eqn|LemmaNonParasite01}.

Let us now assume that $\omega$ is an environment such that 
$\sigma_{i,j}^{(a,a)}(\omega)\in E_i\cap E_j$. Then, each geodesic from the origin to the terminal vertex must go through both $i$ and $j$. This is impossible, since both edges are incident to the origin, so a path through both would revisit the origin. This completes the proof of 
\eqref{eqn|nonParasiteDirection01}.

Let us prove \eqref{eqn|nonParasiteDirection02}. 
 \begin{figure}[t]
 \centering
\begin{tikzpicture}
\draw[thick,dotted] (1,4) -- (1,2)  ;
\draw[dashed] (1,4) -- (1,5) -- (9,5) -- (9,1) -- (1,1) -- (1,2);
\draw[thick] (1,3) -- (9,3);
\node[right] at (1,3.5) {$i$};
\node[right] at (1,2.5) {$j$};
\node at (1,3) {$\bullet$};  
\node at (9,3) {$\bullet$}; 
\node at (1,5) {$\bullet$};  
\node at (1,4) {$\bullet$}; 
\node at (1,2) {$\bullet$}; 
\node at (9,1) {$\bullet$}; 
\node at (1,1) {$\bullet$};
\node[left] at (1,3) {$0$};
\node[right] at (9,3) {$V$};
\node[left] at (1,5) {$L_1$};
\node[right] at (9,5) {$R_1$};
\node[left] at (1,1) {${L_2}$};
\node[right] at (9,1) {${R_2}$};
\node[below] at (5,5.0) {$\gamma_1$};
\node[above] at (5,1.0) {$\gamma_2$};
\end{tikzpicture}
\caption{Example of an environment that proves \eqref{eqn|nonParasiteDirection02}.}
 \label{fig|nonParasiteDirection02}
\end{figure}
Let $V$ denote the terminal point with coordinates $(n,0,\dots, 0)$.  
Let $\gamma_1$ be the path from the origin to $V$ that consists of the following three line segments:
\begin{itemize}
\item The first segment connects the origin with the point $L_1$ whose coordinates are $(0$, $2$, $0$, $\dots$, $0)$;
\item The second segment connects the point $L_1$ with the point $R_1$ with coordinates $(n$, $2$, $0$, $\dots$, $0)$;
\item The third segment connects $R_1$ with the terminal point $V$. 
\end{itemize} 
Let $\gamma_2$ be the path from the origin to $V$ that consists of the following three segments:
\begin{itemize} 
\item The first segment connects the origin with the point $L_2$ whose coordinates are $(0$, $-2$, $0$, $\dots$, $0)$;
\item The second segment connects the point $L_2$ with the point $R_2$ with coordinates $(n$, $-2$, $0$, $\dots$, $0)$;
\item The third segment connects $R_2$ with the terminal point $V$. 
\end{itemize}
The environment $\hat\omega$ assigns $b$ to every edge outside of $\gamma_1\cup \gamma_2$ and the value $a$ to every edge on 
$\gamma_1\cup \gamma_2\setminus\{i,j\}$. 
We can choose $n$ to be large enough that the straight line segment from $0$ to $V$ is not a geodesic on any of the environments 
$\sigma_{i,j}^{\overrightarrow \alpha}(\hat\omega)$ for $\overrightarrow \alpha\in\{a,b\}^2$. 
 The passage time over the segment between $0$ and $V$ is 
$nb$. The passage time over each of $\gamma_1$ and $\gamma_2$ is either $(n+4)a$ or $(n+3)a+b$. These will be smaller than 
$nb$ if we choose \[n>\frac{3a+b}{b-a}.\]
The first-passage percolation times on $\sigma_{i,j}^{(a,a)}(\hat\omega)$, $\sigma_{i,j}^{(a,b)}(\hat\omega)$, and 
$\sigma_{i,j}^{(b,a)}(\hat\omega)$ are $(n+4)a$. The first-passage percolation time on $\sigma_{i,j}^{(b,b)}(\hat\omega)$
is $(n+3)a+b$. Therefore, 
\begin{align*}
\partial_{i,j}f(\hat\omega)&=(n+3)a+b-2\cdot (n+4)a+(n+4)a=b-a>0.
\end{align*}
This completes the proof of \eqref{eqn|nonParasiteDirection02} and, hence, of the proposition.
\end{proof}

We believe that there are some reasonable conditions on $S$ that avoid sets like those in proposition \ref{thm|Parasite}. 
\begin{conjecture}\label{conj|PositiveExpectation} For well-behaved sets $S$, the expected value of $\partial_Sf$ is non-negative. 
\end{conjecture}

Moreover, it is likely that the following conjecture is true.
 \begin{conjecture}\label{conj|PositiveDrift} Assume that $k\geq 2$ is fixed. There is a real number $\theta_k\in(0,1)$ such that for most reasonable sets $S$ of edges  of cardinality $k$ and every positive $m$, \[\mathbb P(\partial_Sf=-m)\leq  \theta_k \mathbb P(\partial_Sf=m).\] 
\end{conjecture}
The conjecture \ref{conj|PositiveDrift} would imply that $\mathbb E\left[\partial_Sf\right]$ is comparable to $\mathbb P(\partial_Sf\neq 0)$, i.e. that there is a constant $C_k>1$ such that $\mathbb E\left[\partial_Sf\right]/\mathbb P\left(\partial_Sf\neq 0\right)\in \left(1/C_k,C_k\right)$.

The conjectures \ref{conj|Equivalence} and  \ref{conj|PositiveDrift} and the proposition \ref{thm|EasyBound}  would imply that each of $\mathbb P(\partial_Sf\neq 0)$, $\|\partial_Sf\|_1$, and $\|\partial_Sf\|_2$ is bounded above by $ D n^{-\frac{kd-1}2}$. 
Even these bounds would not be sufficient to improve the variance bounds. For improving the variance in the torus model, the following, harder, conjecture is needed for at least one integer $k\geq 3$. 
\begin{conjecture}
\label{eqn|HardConjecture} 
Fix $k\geq 3$. There exists a constant $\theta\in(0,\frac d2)$ such that for sufficiently general choice of the set $S$ of edges of cardinality $k$ and it subset $S'$ of cardinality $k-1$, the following holds \[n^{\theta} \mathbb P(\partial_Sf\neq 0) \approx  \mathbb P(\partial_{S'}f\neq 0).\]
\end{conjecture}
The conjecture \ref{eqn|HardConjecture} would be a major step towards obtaining the algebraic bound $O(n^{\alpha})$ in dimension $d$ for $\alpha=1+2\theta-d$. Indeed, together with conjectures  \ref{conj|Equivalence} and  \ref{eqn|HardConjecture}, the sum of squares of $L^2$ norms of derivatives of order $k-1$ would be of order at most $O(n^{\alpha})$ because
\begin{align*}
\sum_{|M|=k-1}\|\partial_Mf\|_2^2&\leq n^{(k-1)d}\cdot \left(\frac{D}{n^{\frac{kd-1}2}}\cdot n^{\theta}\right)^2= D^2 n^{kd-d-kd+1+2\theta}=D^2n^{\alpha}.
\end{align*}

 \begin{appendix}

\section{Idempotent functions} 
 \begin{proof}[Proof of Proposition \ref{prop|IdempotentFixedPoints}]
 If $x\in \psi(R)$, then there is $z\in R$ such that $x=\psi(z)$. We now have $\psi(x)=\psi(\psi(z))=\psi(z)=x$, hence $x=\psi(x)$. This proves $\psi(R)\subseteq \left\{x\in R: \psi(x)=x\right\}$. The reverse inclusion is obvious. 
 \end{proof}
 
 \begin{proof}[Proof of Proposition \ref{prop|IdempotentPsiXi}]
 The equality \eqref{eqn|generalResultIdempotentPsiXi} will follow from the following four inclusions 
\begin{align}
&\quad\psi\left(\psi^{-1}(Q)\cap \xi(R)\right)\subseteq
\psi(\psi^{-1}(Q))\subseteq Q\cap \psi(R)\subseteq \psi^{-1}(Q)\cap \psi(R)\nonumber \\
&\subseteq \psi\left(\psi^{-1}(Q)\cap \xi(R)\right). 
\label{eqn|ThreeInclusions}
\end{align}
The first two of the inclusions hold for all functions $\psi$ and $\xi$, not just the idempotent ones.
Indeed, $\psi^{-1}(Q)\cap\xi(R)\subseteq \psi^{-1}(Q)$ implies the first inclusion; the relation $\psi(\psi^{-1}(Q))\subseteq Q$ holds for all $\psi$ and $Q$, while $\psi^{-1}(Q)\subseteq R$ implies $\psi(\psi^{-1}(Q))\subseteq \psi(R)$.  

Let us now prove the third inclusion. Assume that $x\in Q\cap \psi(R)$. We need to prove that $x\in \psi^{-1}(Q)$. From \eqref{eqn|rangeFixedPoints} we have $x=\psi(x)$ which is sufficient to conclude that 
$\psi(x)\in Q$. 

We now prove the fourth inclusion in \eqref{eqn|ThreeInclusions}, i.e.
\begin{align*}
 \psi^{-1}(Q)\cap \psi(R)&\subseteq  \psi\left(\psi^{-1}(Q)\cap \xi(R)\right).  
\end{align*}
Let $x\in \psi^{-1}(Q)\cap \psi(R)$. We need to prove that there exists an element $y$ of the set $\psi^{-1}(Q)\cap \xi(R)$ such that $\psi(y)=x$. We will prove that we may take $y=\xi(x)$. We must prove that $\psi(y)=x$, 
$y\in \psi^{-1}(Q)$, and $y\in \xi(R)$. 
The last assertion, that $y\in \xi(R)$, is obvious because of our choice $y=\xi(x)$. Since $x\in \psi(R)$, the equality \eqref{eqn|rangeFixedPoints} implies $x=\psi(x)=\psi(\xi(x))=\psi(y)$, which implies the first assertion that $\psi(y)=x$. The second assertion, $y\in \psi^{-1}(Q)$, follows from $\psi(y)=\psi(\xi(x))= \psi(x)\in Q$, which holds due to our assumption $x\in \psi^{-1}(Q)$. 
\end{proof}

\begin{proof}[Proof of Proposition \ref{prop|IdempotentPsiAPsiB}] 
Since $\psi^a(R)\cup \psi^b(R)=R$, it suffices to prove the following four inclusions \begin{align} E\cap \psi^a(R) &\subseteq  (\psi^a)^{-1}(E) , \label{eqn|InclusionAEGenPsi}\\
E\cap \psi^b(R) &\subseteq  (\psi^a)^{-1}(E)  , \label{eqn|InclusionAEGenXi}\\
(\psi^b)^{-1}(\hat E)\cap \psi^a(R)&\subseteq  \hat E,  \label{eqn|InclusionHatAEGenPsi}\\
(\psi^b)^{-1}(\hat E)\cap \psi^b(R)&\subseteq  \hat E. \label{eqn|InclusionHatAEGenXi}
  \end{align}
We will first apply \eqref{eqn|generalResultIdempotentPsiXi} to $\psi=\psi^a$, $\xi=\mathrm{id}$, and $Q=E$. We will not use the first equality, so the choice of $\xi$ is immaterial.
 \begin{align*}E\cap \psi^a(R) & =(\psi^a)^{-1}(E)\cap \psi^a(R)\subseteq (\psi^a)^{-1}(E),\end{align*}
which implies \eqref{eqn|InclusionAEGenPsi}.
Now we use our assumptions $E\subseteq \hat E$ and $(\psi^b)^{-1}(\hat E)\subseteq (\psi^a)^{-1}(E)$. We apply \eqref{eqn|generalResultIdempotentPsiXi} to $\psi=\psi^b$ and $Q=\hat E$. 
\begin{align*}
E\cap \psi^b(R)\subseteq \hat E\cap \psi^b(R)&= (\psi^b)^{-1}(\hat E)\cap \psi^b(R)\subseteq (\psi^b)^{-1}(\hat E)\subseteq (\psi^a)^{-1}(E).
\end{align*}
The last inclusion implies \eqref{eqn|InclusionAEGenXi}.

We now prove \eqref{eqn|InclusionHatAEGenXi}. We apply \eqref{eqn|generalResultIdempotentPsiXi} to $\psi=\psi^b$ and $Q=\hat E$.
\begin{align*}
(\psi^b)^{-1}(\hat E)\cap \psi^b(R)&=\hat E\cap \psi^b(R)\subseteq \hat E.
\end{align*}
For the proof of \eqref{eqn|InclusionHatAEGenPsi}, we use the assumptions $E\subseteq \hat E$ and $(\psi^b)^{-1}(\hat E)\subseteq (\psi^a)^{-1}(E)$ and apply 
\eqref{eqn|generalResultIdempotentPsiXi} to $\psi=\psi^a$ and $Q= E$.
\begin{align*}
(\psi^b)^{-1}(\hat E)\cap \psi^a(R)\subseteq (\psi^a)^{-1}(E)\cap \psi^a(R)&=E\cap \psi^a(R)\subseteq E\subseteq \hat E.
\end{align*}
This completes the proof of \eqref{eqn|InclusionHatAEGenPsi}. 
 \end{proof}

\end{appendix}

\end{document}